\documentclass[11pt]{amsart}
\usepackage[leqno]{amsmath}
\usepackage{amssymb,latexsym,soul,cite,amsthm,color,enumitem,graphicx,mathtools,microtype,accents}
\usepackage[colorlinks=true,urlcolor=glaucous,citecolor=glaucous,linkcolor=glaucous,linktocpage,pdfpagelabels,bookmarksnumbered,bookmarksopen]{hyperref}
\definecolor{glaucous}{rgb}{0.38, 0.51, 0.71}
\usepackage{tikz}
\usepackage[english]{babel}
\usepackage[left=2.7cm,right=2.7cm,top=2.4cm,bottom=2.5cm]{geometry}

\numberwithin{equation}{section}

\newtheorem{theorem}{Theorem}[section]
\theoremstyle{plain}
\newtheorem{lemma}[theorem]{Lemma}
\theoremstyle{plain}
\newtheorem{proposition}[theorem]{Proposition}
\theoremstyle{plain}
\newtheorem{corollary}[theorem]{Corollary}

\theoremstyle{definition}
\newtheorem{remark}[theorem]{Remark}
\newtheorem{example}[theorem]{Example}

\newcommand{\N}{{\mathbb N}}

\newcommand{\R}{{\mathbb R}}
\newcommand{\eps}{\varepsilon}
\newcommand{\beq}{\begin{equation}}
\newcommand{\eeq}{\end{equation}}
\renewcommand{\le}{\leqslant}
\renewcommand{\ge}{\geqslant}

\newcommand{\w}{W^{s,p}_0(\Omega)}
\newcommand{\fpl}{(-\Delta)_p^s\,}

\newcommand{ \dd}{{\rm d}}
\newcommand{\ds}{{\rm d}^s_\Omega}
\newcommand{\p}{p^*_s}

\def\Xint#1{\mathchoice
{\XXint\displaystyle\textstyle{#1}}%
{\XXint\textstyle\scriptstyle{#1}}%
{\XXint\scriptstyle\scriptscriptstyle{#1}}%
{\XXint\scriptscriptstyle\scriptscriptstyle{#1}}%
\!\int}
\def\XXint#1#2#3{{\setbox0=\hbox{$#1{#2#3}{\int}$ }
\vcenter{\hbox{$#2#3$ }}\kern-.6\wd0}}
\def\fint{\Xint-}

\makeatletter
\newcommand{\leqnomode}{\tagsleft@true}
\newcommand{\reqnomode}{\tagsleft@false}
\makeatother

\newenvironment{enumroman}{\begin{enumerate}

}{\end{enumerate}}

\title[Fractional $p$-Laplacian with unbounded reactions]{On boundary regularity for the fractional $p$-Laplacian with unbounded reactions}

\author[A.\ Iannizzotto, S.\ Mosconi]{Antonio Iannizzotto, Sunra Mosconi}

\address[A.\ Iannizzotto]{Dipartimento di Matematica e Informatica
\newline\indent
Universit\`a degli Studi di Cagliari
\newline\indent
Via Ospedale 72, 09124 Cagliari, Italy}
\email{antonio.iannizzotto@unica.it}

\address[S.\ Mosconi]{Dipartimento di Matematica e Informatica
\newline\indent
Universit\`a degli Studi di Catania
\newline\indent
Viale A.\ Doria 6, 95125 Catania, Italy}
\email{mosconi@dmi.unict.it}

\subjclass[2010]{35R11, 47H11, 35A15.}
\keywords{Fractional $p$-Laplacian, Boundary regularity, H\"older continuity.}

\begin{document}

\begin{abstract}
We consider an elliptic equation driven by the $s$-fractional $p$-Laplacian, set in a smooth bounded domain $\Omega\subset\R^N$ with homogeneous nonlocal Dirichlet conditions and a reaction $f$ lying in $L^q(\Omega)$ for some $q\ge 1$. We prove that the unique solution $u$ is $\alpha$-H\"older continuous up to the boundary, for any $\alpha$ below $p'(s-N/pq)$ if $N/ps<q\le N/s$, and $\alpha=s$ if $q>N/s$. Also, we prove that if $q>N/s$ then $u/\ds$ admits a H\"older continuous extension to the closure of $\Omega$, where $ \dd_\Omega$ denotes the distance from the boundary. Our results are almost optimal and extend previous regularity theorems known in  the linear case.
\end{abstract}

\maketitle

\begin{center}
\vspace{-1.5em}
Version of \today\
\end{center}

\section{Introduction}\label{sec1}

\noindent
The present paper is devoted to the study of global regularity of the solution of the following Dirichlet problem:
\beq\label{dir}
\begin{cases}
\fpl u = f & \text{in $\Omega$} \\
u = 0 & \text{in $\R^N\setminus\Omega$.}
\end{cases}
\eeq
The domain $\Omega\subset\R^N$ is assumed to be open, bounded, and with a $C^{1,1}$-smooth boundary $\partial\Omega$. Further, $0<s<1$, $p>1$, and the leading operator is the $s$-fractional $p$-Laplacian, defined as the differential of the convex, $C^1$-functional
\[u \mapsto \frac{1}{p}\iint_{\R^N\times\R^N}\frac{|u(x)-u(y)|^p}{|x-y|^{N+ps}}\,dx\,dy\]
in the fractional Sobolev space $\w$ (see Section \ref{sec2} below for details). Such operator is both nonlocal and nonlinear, referred to as degenerate if $p>2$ and singular if $1<p<2$, respectively (for $p=2$ it coincides with the fractional Laplacian). In special cases, it admits the heuristic alternative formulation
\[\fpl u(x) = 2\lim_{\eps\to 0^+}\,\int_{\R^N\setminus B_\eps(x)}\frac{|u(x)-u(y)|^{p-2}(u(x)-u(y))}{|x-y|^{N+ps}}\,dy.\]
Finally, the reaction is $f\in L^q(\Omega)$ with $q\ge 1$ fixed s.t.\ $f$ lies in the dual space of $\w$, which ensures well-posedness of problem \eqref{dir}. A classical variational argument shows that \eqref{dir} has a unique weak solution $u\in\w$.
\vskip2pt
\noindent
The problem of regularity of such solution has been the subject of a rapidly developing literature in recent years focused on deriving, under suitable additional assumptions on $f$, either higher (possibly fractional) differentiability properties of solutions, or their H\"older continuity. We aim at the latter setting and refer to \cite{BDLM, DKLN} for the most recent progresses on the former. Regarding the functional analytic framework for the forcing term, we will require that $f\in L^q(\Omega)$, and refer again to \cite{DKLN} and the literature therein for results requiring higher smoothness for $f$ instead. 

\subsection{Background and motivations}\label{ss11}

In order to appropriately introduce and justify the present investigation, let us briefly overview what is known in simpler frameworks. For $p=2$, $s=1$, \eqref{dir} essentially reduces to the following linear Dirichlet problem:
\[\begin{cases}
-\Delta u=f&\text{in $\Omega$},\\
u=0&\text{on $\partial\Omega$.} 
\end{cases}\]
The classical theory proves that, for $\Omega$ bounded and conveniently smooth and $f\in L^q(\Omega)$ with $q>N/2$ and $q\neq N$, we have the following sharp global regularity result (where we set $C^{1+\alpha}=C^{1,\alpha}$ for all $\alpha\in (0, 1)$):
\beq\label{lap}
f \in L^q(\Omega) \quad \Longrightarrow \quad u\in C^{2-\frac{N}{q}}(\overline\Omega)
\eeq
For the linear, nonlocal case ($p=2$, $0<s<1$) there is no complete analogue of \eqref{lap}, as the following example shows:

\begin{example}\label{tor}
Let $0<s<1$, $q>1$ be s.t.\ $0<2s-N/q\ne 1$, $f\in L^q_{\rm loc}(\Omega)$, and $u\in W^{s,2}_0(\Omega)$ solve
\beq\label{sfl}
\begin{cases}
(-\Delta)^s u = f & \text{in $\Omega$} \\
u = 0 & \text{in $\R^N\subset\Omega$.}
\end{cases}
\eeq
Then, $u$ belongs to the Bessel potential space $H^{2s,q}_{\rm loc}(\Omega)$. Recalling the standard Sobolev-Morrey embedding $H^{2s,q}(\R^N)\hookrightarrow C^{2s-\frac{N}{q}}(\R^N)$ and using the localization argument in \cite[Corollary 2.4, Lemma 2.9]{ROS}, we have $u\in C^{2s-N/q}_{\rm loc}(\Omega)$. In particular, if $s>1/2$ then $u\in C^1(\Omega)$ for all $q$ big enough, and if $s\le 1/2$ the limit regularity of $u$ for $q\to\infty$ is $C^{2s}$. However, no matter how summable $f$ is, the regularity of $u$ at the boundary cannot be better than $C^s$, corresponding to the threshold $q=N/s$. Indeed, the function
\[u(x) = (1-|x|^2)_+^s\]
(here and in the sequel $t_+=\max\{0, t\}$ for all $t\in \R$) has a constant $s$-fractional Laplacian in the unit ball $B_1(0)$, with $u\in C^\infty(B_1(0))$ but only $u\in C^s(\overline{B}_1(0))$.
\end{example}

\noindent
Note that the $C^s$ boundary regularity threshold outlined in Example \ref{tor} prevents the existence of a gradient on $\partial\Omega$, even when the solution is {\em a priori} $C^1$ in the interior (that is, when $2s-N/q>1$). Nevertheless, a convenient replacement of boundary gradient continuity can be retrieved in the fractional framework for all $s\in (0, 1)$, as suggested in the seminal paper \cite{ROS}. Assume that $\partial\Omega$ is $C^{1,1}$ and set for all $x\in\R^N$
\[\dd_\Omega(x) = {\rm dist}(x,\Omega^c) = \inf_{y\in\Omega^c}\,|x-y|.\]
If $u$ solves \eqref{sfl} with $f\in L^\infty(\Omega)$, then for any $0<\alpha<s$ the quotient $u/\ds$ turns out to have a $\alpha$-H\"older continuous extension to $\overline\Omega$, and and hence to all of $\R^N$ (see also \cite{ROS1}). Such property allows to replace  the gradient at the boundary as focus of interest with the fractional normal derivative, defined at $\bar x\in\partial\Omega$ by
\[\frac{\partial u}{\partial\nu^s}(\bar x) \sim \lim_{x\to\bar x}\,\frac{u(x)}{\ds(x)}.\]
The regularity properties at $\partial\Omega$ of the quotient above  is referred to as {\em fine boundary regularity}. The relevant extension to the case $f\in L^q(\Omega)$, $q>N/s$ is contained in \cite[Theorem 7.3]{Gr} where it is proved, among other things, that under such assumption (and identifying $u/\ds$ with its extension to $\overline\Omega$) the solution of \eqref{sfl} satisfies
\[\frac{u}{\ds} \in C^{s-\frac{N}{q}}(\overline{\Omega}).\]
Coupling the interior H\"older regularity seen in Example \ref{tor} and the boundary regularity of \cite{Gr} through an interpolation, we can summarize the linear case in the following scheme:
\beq\label{lin}
\begin{cases}
\displaystyle\frac{N}{2s} < q \le \frac{N}{s} \quad \Longrightarrow \quad u\in C^{2s-\frac{N}{q}}(\overline\Omega) \\
\displaystyle q > \frac{N}{s} \quad \Longrightarrow \quad u\in C^s(\overline\Omega), \ \frac{u}{\ds}\in C^{s-\frac{N}{q}}(\overline\Omega). \\
\end{cases}
\eeq
The aim of the present paper is to derive a statement parallel to \eqref{lin} for the nonlinear framework $p\neq 2$. In fact, the nonlinear setting is much more delicate and we will not be able to recover a full analogue with respect to fine boundary regularity, as the H\"older exponent of $u/\ds$ that we obtain is not explicitly determined. 
\vskip2pt
\noindent
Let us now review what is known in the nonlocal, nonlinear case ($0<s<1$, $p>1$). First, the condition $q>N/ps$ is naturally required to ensure {\em continuity} of solutions, as the following example shows:

\begin{example}\label{dis}
For $p=2$, the failure of the Bessel potential spaces embedding $H^{2s, N/2s}(\R^N)\hookrightarrow L^\infty_{\rm loc}(\R^N)$ implies via localization that there are unbounded functions whose fractional Laplacian belongs to  $L^{N/2s}(\Omega)$. \\ For $p\ne 2$,  $N>sp$ and $q<N/ps$, a simple example is the following. Let $\alpha$ satisfy
\[ \frac{1}{p-1}\Big(ps-\frac{N}{q}\Big) < \alpha < 0\]
and set for all $x\neq 0$
\[u_\alpha(x) = |x|^\alpha.\]
Then $u_\alpha\in W^{s,p}(B_1(0))$ and for a constant $C\neq 0$ depending on the parameters we have
\[\fpl u_\alpha(x) = C|x|^{(p-1)\alpha-ps}.\]
By the choice of exponents, we then have $\fpl u_\alpha\in L^q(B_1(0))$, but $u_\alpha$ even fails to be bounded in $B_1(0)$.
\end{example}

\noindent
In view of Example \ref{dis}, we will henceforth assume $q\ge 1$ and $q>N/ps$, and thus (setting as customary $p'=p/(p-1)$) define the positive threshold exponent
\beq\label{exp}
\bar\alpha = \min\Big\{1,\,p'\Big(s-\frac{N}{pq}\Big)\Big\}.
\eeq
In this connection, note that
\[\bar\alpha \le s \quad \Longleftrightarrow \quad q \le \frac{N}{s}.\]
The local H\"older regularity theory for \eqref{dir} started in \cite{DCKP} (see also \cite{CDI}), and subsequently developed up to the almost optimal H\"older continuity in \cite{BLS} (for $p>2$) and \cite{GL}   (for $1<p<2$). Summarizing these results, we have that the solution of \eqref{dir} lies in $C^\alpha_{\rm loc}(\Omega)$ for all $0<\alpha<\bar\alpha$.
\vskip2pt
\noindent
We recall {\em en passant} some further local regularity results, showing in particular that the optimal $C^{\bar\alpha}$ regularity is indeed achieved in many cases, provided $\bar\alpha<1$. In \cite{DN}, for the degenerate regime $p\ge 2$, $u\in C^{\bar\alpha}_{\rm loc}(\Omega)$ is obtained under the weaker assumption $f\in L^{q,\infty}_{\rm loc}(\Omega)$, $q>N/ps$. The case $f\in L^{p'}_{\rm loc}(\Omega)$, for any $p>1$ with $p'>N/ps$, can be treated through \cite{DKLN} and Sobolev-Morrey embedding. Similarly, the gradient estimates in \cite{BDLM} (see also \cite{GJS}) imply local $C^{\bar\alpha}$ regularity for any $p>1$ under the condition $p's>1$. Finally, in \cite{BT}, under the assumptions $p's\neq 1$ and $f\in L^\infty(\Omega)$, $C^{\bar\alpha}$ regularity is proved even when $\bar\alpha=1$ (see \cite[Example 1.6]{BLS}). Unfortunately, though, the picture is not complete yet.
\vskip2pt
\noindent
Regarding boundary regularity, the optimal rate of continuity $u\in C^s(\overline\Omega)$ (hence independent of $p$) was obtained in \cite{IM} for $f\in L^\infty(\Omega)$ along with the fine boundary regularity $u/\ds\in C^\alpha(\overline\Omega)$ with an undetermined $\alpha\le s$ (partial results were obtained in \cite{IMS,IMS1}). Such amount of global regularity allows for nice applications in existence theory, comparison principles, bifurcation results and so on, see for instance \cite{I} for a brief account on the subject.

\subsection{Main results}\label{ss12}

Our contributions intervene at this point. Regarding pure H\"older regularity, we shall prove the following result which extends local continuity to the boundary, with a uniform norm estimate:

\begin{theorem}\label{hol}
{\em (Global regularity)} Let $\Omega\subset\R^N$ be open bounded with a $C^{1,1}$-boundary, $p>1$, $0<s<1$, $f\in L^q(\Omega)$ with $q\ge 1$, $q>N/ps$ and $u\in\w$ be the solution of \eqref{dir}. If $\bar\alpha$ is given by \eqref{exp} then for any $0<\alpha\le s$ s.t.\ $\alpha<\bar\alpha$ there exists $C_\alpha>0$ depending on $N,p,s,\Omega,q$, and $\alpha$, s.t.\
\[\|u\|_{C^\alpha(\overline\Omega)} \le C_\alpha\|f\|_{L^q(\Omega)}^\frac{1}{p-1}.\]
\end{theorem}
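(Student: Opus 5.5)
By homogeneity of $\fpl$ (degree $p-1$) we may normalize $\|f\|_{L^q(\Omega)}=1$: replacing $u$ by $u/\|f\|_q^{1/(p-1)}$ gives the stated estimate. The plan is to combine three ingredients:
\begin{itemize}
\item a global $L^\infty$ bound;
\item a boundary decay estimate $|u(x)|\le C\,\dd_\Omega(x)^\alpha$;
\item the known interior $C^\alpha_{\rm loc}$ estimates of \cite{BLS,GL}, applied in scaled form on balls $B_{\dd_\Omega(x)/2}(x)$.
\end{itemize}
These are then glued by the standard covering argument used in \cite{ROS,IMS}.

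\textbf{Step 1 ($L^\infty$ bound).} Since $q>N/ps$ and $q\ge1$, a De Giorgi or Moser iteration on the truncations $(u-k)_+$ gives $\|u\|_{L^\infty(\R^N)}\le C$. Alternatively one can compare with the torsion-type problem using an $L^q$–$L^\infty$ Stampacchia lemma. The point is that $q>N/ps$ makes the level-set iteration close.

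\textbf{Step 2 (boundary barrier).} This is the main obstacle. One must show $|u|\le C\,\dd_\Omega^\alpha$ for every $\alpha<\bar\alpha$ with $\alpha\le s$. With $f$ unbounded, a pointwise supersolution cannot dominate $f$ directly, so I would argue in two stages.
\begin{enumerate}
\item Split $u$ via comparison. Let $w$ be the supersolution built from $\dd_\Omega^\alpha$, suitably modified; for $\alpha<s$ one has $\fpl(\dd_\Omega^{\alpha})\ge c\,\dd_\Omega^{\alpha(p-1)-ps}$ near $\partial\Omega$ (computations as in \cite{IMS}). This blows up like $\dd_\Omega^{-\gamma}$ with $\gamma=ps-\alpha(p-1)>0$.
\item Control the part of $f$ not dominated by this power. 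Decompose $\Omega$ into the sets $\{|f|\le M\dd_\Omega^{-\gamma}\}$ and their complements. On the bad set, use a local Caccioppoli/De Giorgi estimate in boundary balls $B_r(x_0)$, $x_0\in\partial\Omega$. The scaled oscillation bound reads
\[\sup_{B_{r}(x_0)}|u|\le C\Big(r^{\alpha}+\big(r^{ps-N/q}\|f\|_{L^q(B_{2r})}\big)^{1/(p-1)}+\text{tail}\Big).\]
Here $r^{(ps-N/q)/(p-1)}=r^{\bar\alpha}$, which beats $r^\alpha$.
\item Iterate this dyadically in $r$ (a Campanato-type decay at boundary points, using $u=0$ outside $\Omega$ and the $C^{1,1}$ exterior ball condition). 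The tail is handled by the induction hypothesis $|u|\le C\rho^\alpha$ on larger scales, which is summable because $\alpha<p's$.
\end{enumerate}
This yields $\sup_{B_r(x_0)}|u|\le Cr^\alpha$ for all $x_0\in\partial\Omega$ and $r$ small. The delicate points are making the iteration constant uniform and restricting to $\alpha\le s$, since the barrier exponent cannot exceed $s$.

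\textbf{Step 3 (interior scaled estimate and gluing).} For $x\in\Omega$, let $R=\dd_\Omega(x)/2$ and rescale $u_R(y)=R^{-\alpha}u(x+Ry)$. This solves $\fpl u_R=f_R$ with $f_R(y)=R^{ps-\alpha(p-1)}f(x+Ry)$, and
\[\|f_R\|_{L^q(B_1)}=R^{ps-\alpha(p-1)-N/q}\|f\|_{L^q}\le C,\]
because $\alpha<\bar\alpha$. Step 2 gives $\|u_R\|_{L^\infty(B_1)}\le C$ together with a weighted tail bound $|u_R(y)|\le C(1+|y|)^\alpha$, whose tail is finite since $\alpha<p's$. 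The interior $C^{\alpha}$ estimate of \cite{BLS,GL} then gives $[u]_{C^\alpha(B_{R}(x))}\le C$ uniformly.

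Finally, for $x,y\in\Omega$, either $|x-y|<\dd_\Omega(x)/2$, and the interior bound applies, or $|x-y|\ge\dd_\Omega(x)/2$. In the latter case,
\[|u(x)-u(y)|\le C\big(\dd_\Omega(x)^\alpha+\dd_\Omega(y)^\alpha\big)\le C|x-y|^\alpha,\]
using Step 2 and $\dd_\Omega(y)\le\dd_\Omega(x)+|x-y|$. Together with Step 1 this gives the claimed $C^\alpha(\overline\Omega)$ bound.
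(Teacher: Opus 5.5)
\textbf{There is a genuine gap, and it is in Step 2.} Steps 1 and 3 are fine. Step 3 is exactly the gluing the paper itself uses, via Lemma \ref{rsb} with $\nu=0$, in the endpoint case. But Step 2 carries the entire weight of the theorem. Since $u=0$ on $\partial\Omega$, any bound on $[u]_{C^\alpha(\overline\Omega)}$ already gives $|u|\le C\dd_\Omega^\alpha$, so Step 2 is equivalent to the statement itself, and as written it is not a proof.

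\emph{The barrier sub-step cannot handle $f\in L^q(\Omega)$.} Lemma \ref{wcp} requires $\fpl(Mw)\ge f$ on the whole comparison set. Splitting $\Omega$ into $\{|f|\le M\dd_\Omega^{-\gamma}\}$ and its complement does not help. Since $\fpl$ is nonlinear, you cannot decompose $u$ into pieces driven by the good and bad parts of $f$.

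\emph{The fallback dyadic iteration does not close.} In your displayed inequality, both $r^\alpha$ and the tail come from the induction hypothesis $|u|\le C_*\rho^\alpha$ on larger scales. The local boundedness estimate returns them multiplied by constants that are not small; the tail alone is comparable to $C_*r^\alpha$. So $C_*$ grows at every step. A De Giorgi argument using the exterior density of $\Omega^c$ does give a genuine contraction $\sup_{D_r}|u|\le\theta\sup_{D_{2r}}|u|+\dots$. However, $\theta<1$ is undetermined, so you only get some small boundary exponent, not every $\alpha<\min\{s,\bar\alpha\}$.

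\textbf{What is missing: comparison with the $(s,p)$-harmonic replacement.} Let $v$ be the $(s,p)$-harmonic replacement of $u$ in $D_R(\bar x)$.
\begin{itemize}
\item The barrier of Proposition \ref{bar} gives $v$ the sharp boundary decay ${\rm osc}_{D_r(\bar x)}v\le CH_\alpha(u)R^\alpha(r/R)^s$ (Lemma \ref{hoe}).
\item The difference $u-v$ is controlled only in $L^p$: test with $u-v$ and use monotonicity and the fractional Hardy inequality (Lemma \ref{qve}; this is delicate for $p<2$).
\item By nonlocality, the bound on $v$ depends on the global seminorm $H_\alpha(u)$. The paper therefore works with Campanato variances and bootstraps $H_{\phi(\alpha)}(u)\le C[H_\alpha(u)+1]$ along $\alpha_{j+1}=\phi(\alpha_j)\uparrow\min\{s,\delta\}$ (Lemma \ref{ite}). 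The interior and boundary regimes are balanced through the radii $2r^\theta$ and $2r^\eta$.
\end{itemize}

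\textbf{The endpoint $\alpha=s$ is a second gap.} The scheme above only approaches $s$, so it cannot give the endpoint $\alpha=s$ required when $q>N/s$. Your power barrier degenerates there too: the constant $c$ in $\fpl\dd_\Omega^\alpha\ge c\,\dd_\Omega^{(p-1)\alpha-ps}$ tends to $0$ as $\alpha\to s$, because $\dd^s$ is $(s,p)$-harmonic in a half-space. The bound $|u|\le C\dd_\Omega^s$ is a fine boundary regularity statement. The paper obtains it from Theorem \ref{fbr} before applying the Step 3 gluing. That result rests on weighted Campanato estimates for $u/\dd_\Omega^s$ (Proposition \ref{boe}, Lemma \ref{hwe}, the Hardy inequality) and approximation by bounded reactions.
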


\noindent
Some comments on Theorem \ref{hol} above are in order. First note that, as already recalled, the assumption $q>N/ps$ forces $L^q(\Omega)$ to be contained in the dual of $\w$, so that problem \eqref{dir} is well posed. Also, recalling that $\bar\alpha\le s$ iff $q\le N/s$, and in light of Example \ref{dis}, the previous statement determines two different regimes for $q$:
\begin{itemize}[leftmargin=1cm]
\item[$(a)$] When $q\in (N/ps, N/s]$, there is no difference between up-to-the boundary and interior H\"older regularity (in the sense mentioned before). Furthermore, the corresponding exponent is almost optimal, see Example \ref{opt} below.
\item[$(b)$] When $q\in (N/s, \infty)$ the optimal (due to Example \ref{tor}) $C^s$ regularity up to the boundary is achieved.
\end{itemize}
In particular, the H\"older regularity up to the boundary in the nonlinear setting turns out to be completely analogous to the previously described linear case. A clear drawback of Theorem \ref{hol} is the lack of global $C^{\bar\alpha}$ regularity in case $(a)$ above. A cleaner and stronger statement, which we conjecture to be true, would be
\[f\in L^q(\Omega)\quad \Longrightarrow \quad u\in C^{\min\{s, \bar\alpha\}}(\overline{\Omega})\]
whenever $\bar\alpha>0$. More precisely, we believe that whenever $q\in(N/ps,N/s]$ and interior $C^{\bar\alpha}$ regularity holds true for local solutions of \eqref{dir}, the same regularity persists up to the boundary. Some of such cases have been reviewed in Subsection \ref{ss11}.
\vskip2pt
\noindent
Note that, despite the nonlocal nature of the operator, which may in principle improve H\"older regularity near $\partial\Omega$, our result is almost optimal, also in term of boundary regularity, as the following example shows:

\begin{example}\label{opt}
(Almost optimal boundary regularity) For simplicity, we consider the one-dimensional case, setting $N=1$, $\Omega=(0,\infty)$, and choosing $q\in(1/ps,1/s)$, which entails
\[\bar\alpha = p'\Big(s-\frac{1}{pq}\Big) \in (0,s).\]
We want to show that $C^\beta(\overline\Omega)$ regularity cannot hold in general, for any $\beta>\bar\alpha$. Indeed, pick $\alpha\in(\bar\alpha,s)$ and set for all $x\in\R$
\[u_\alpha(x) = x_+^\alpha.\]
Then, we clearly have $u_\alpha\in C^\alpha(\overline\Omega)\setminus C^\beta(\overline\Omega)$, while for all $x\in\Omega$
\[\fpl u_\alpha(x) = Cx^{(p-1)\alpha-ps}\]
for a constant $C\neq 0$ depending on the data, implying $\fpl u_\alpha\in L^q_{\rm loc}(\Omega)$ as
\[q < \frac{1}{ps-(p-1)\alpha}.\]
Similarly, $u_{\bar\alpha}\in C^{\bar\alpha}(\overline\Omega)$ but $\fpl u_{\bar\alpha}\notin L^q_{\rm loc}(\Omega)$. See Appendix \ref{app} for an example in dimension $N\ge 2$ (for $p=2$).
\end{example}

\noindent
Regarding fine (or weighted) boundary regularity, we present the following extension of the known result to the case of unbounded reactions:

\begin{theorem}\label{fbr}
{\rm (Fine boundary regularity)} Let $\Omega\subset\R^N$ be open bounded with a $C^{1,1}$-boundary, $p>1$, $0<s<1$, $f\in L^q(\Omega)$ with $q>N/s$, and $u\in\w$ be the solution of \eqref{dir}. Then, there exist $\alpha\in(0,s]$, $C_\alpha>0$ depending on $N,p,s,\Omega$, and $q$, s.t.\ $u/\ds$ admits a $\alpha$-H\"older continuous extension to $\overline\Omega$ and
\[\Big\|\frac{u}{\ds}\Big\|_{C^\alpha(\overline\Omega)} \le C_\alpha\|f\|_{L^q(\Omega)}^\frac{1}{p-1}.\]
\end{theorem}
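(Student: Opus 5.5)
The plan is to adapt the scheme of \cite{IM}, which was written for $f\in L^\infty(\Omega)$, to the case $f\in L^q(\Omega)$ with $q>N/s$. By the $(p-1)$-homogeneity of $\fpl$ we may normalize $\|f\|_{L^q(\Omega)}=1$. By Theorem~\ref{hol} (case $(b)$) this gives $\|u\|_{C^s(\overline\Omega)}\le C$. In particular $|u|\le C\ds$ in $\Omega$, since $u$ vanishes on $\partial\Omega$. The goal is then a \emph{weighted oscillation decay} near each boundary point $\bar x\in\partial\Omega$. We look for $\theta\in(0,1)$ and $\sigma\in(0,1)$ such that, writing $B_R=B_R(\bar x)$ and $D_R=B_R\cap\Omega$,
\[
\underset{D_{\sigma R}}{\rm osc}\,\frac{u}{\ds}\le (1-\theta)\,\underset{D_R}{\rm osc}\,\frac{u}{\ds}+C R^{\,s-\frac{N}{q}}\ \ \text{(plus a nonlocal tail term)}.
\]
Iterating this estimate gives Hölder continuity of $u/\ds$ at $\bar x$ with some undetermined exponent $\alpha$. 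The rate $R^{s-N/q}$ is exactly the scaling of the $L^q$ norm of $f$ under the rescaling $u_R(x)=R^{-s}u(\bar x+Rx)$. Indeed, $\fpl u_R(x)=R^{s(p-1)}\cdot R^{-s(p-1)}\cdots$, so the rescaled datum $f_R(x)=R^{ps-s(p-1)}f(\bar x+Rx)=R^{s}f(\bar x+Rx)$ satisfies $\|f_R\|_{L^q(B_1)}=R^{s-N/q}\|f\|_{L^q(B_R)}$. This quantity is small, which is precisely where $q>N/s$ enters.

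The key steps are as follows.

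First, we need barriers with $L^q$ data. In \cite{IM} the one-sided bounds $u\ge c\,\ds$ (weak Harnack at the boundary) and $u\le C\ds$ are obtained by comparing with explicit sub- and supersolutions of type $\ds$, or perturbations of $(\ds)^s$, whose fractional $p$-Laplacian is bounded. With $f\in L^q(\Omega)$ pointwise comparison against a bounded right-hand side is no longer available. Instead, we split $u=v+w$ \emph{in a monotone sense}. Here $w$ solves $\fpl w=|f|$ (resp.\ $-|f|$) in a small half-ball with zero exterior datum. By Theorem~\ref{hol} applied to the rescaled problem, together with the scaling above, we get $\|w\|_{L^\infty}\le C R^{s}\,R^{(s-N/q)/(p-1)}$, which is a small multiple of $R^s\sim\ds$ at scale $R$. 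Since the operator is nonlinear, no superposition is available, so we use comparison principles instead. Then $v$ is compared with a solution having datum $0$ and the same exterior values. We also need a version of the \emph{nonlocal Hopf/weak Harnack} estimate of \cite{IM} with a right-hand side that is small in $L^q$, i.e.
\[
\inf_{D_{R/2}}\frac{u}{\ds}\ge c\fint_{D_{R}}\frac{u}{\ds}\,dx-C R^{\frac{s-N/q}{p-1}}
\]
for $u\ge0$ in $B_R$ with $\fpl u\ge -|f|$. I would prove this by rerunning the \cite{IM} argument (barrier plus De Giorgi/Krylov--Safonov type measure estimate). The $L^\infty$ bound on $f$ used there is replaced by an $L^q$ smallness, which is absorbed via the local $L^\infty$ estimate for $w$.

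Second, we run the oscillation decay. Set $M_R=\sup_{D_R}u/\ds$ and $m_R=\inf_{D_R}u/\ds$, and apply the weak Harnack inequality to $(M_R\ds-u)$ and $(u-m_R\ds)$. These functions are not solutions, and $\fpl$ is not linear. To handle this, one uses, as in \cite{IM}, that $\fpl(\lambda\ds+\text{small})$ is controlled near a flat piece of $\partial\Omega$ ($C^{1,1}$ boundary). Almost-linearity is then obtained via a blow-up or compactness argument when the oscillation is small relative to $M_R$. Nonlocal tails are estimated using $|u|\le C\ds$ globally. At the end, Hölder continuity near $\partial\Omega$ is combined with the interior $C^{s}$-type estimates of Theorem~\ref{hol}, applied to $u$ in balls $B_{\ds(x)/2}(x)$ together with the smoothness of $\ds$. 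This gives $u/\ds\in C^\alpha(\overline\Omega)$ with the claimed bound, after undoing the normalization.

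The main obstacle is the first step. The boundary weak Harnack inequality for $u/\ds$ with a nonlinear operator requires handling $\fpl$ of $u-m\ds$, and this is not available with merely $L^q$ data. I would first try to prove a quantitative comparison lemma: if $\fpl u\ge -g$ with $\|g\|_{L^q}$ small, then $u$ dominates the $g=0$ solution minus $C\|g\|_{L^q}^{1/(p-1)}R^{s+\ldots}$. This lemma would let every $L^\infty$-datum estimate of \cite{IM} be transplanted with an additive error of order $R^{(s-N/q)/(p-1)}$.
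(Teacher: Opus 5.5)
Your argument has two gaps: the first step is circular, and the decisive step is left unproved and cannot work as sketched.

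\textbf{Circularity.} In the paper, the case $q>N/s$ of Theorem~\ref{hol} is deduced \emph{from} Theorem~\ref{fbr}. Lemma~\ref{ite} only gives $u\in C^\alpha(\overline\Omega)$ for $\alpha<s$. That yields $|u|\le C{\rm d}_\Omega^\alpha$, not $|u|\le C\ds$. The endpoint $C^s$ is reached through Lemma~\ref{rsb}, whose hypotheses are checked using $\|u/\ds\|_{L^\infty(\Omega)}\le C\|f\|_{L^q(\Omega)}^{1/(p-1)}$, which is the conclusion of Theorem~\ref{fbr}. So the bound $|u|\le C\ds$ is exactly the $L^\infty$ half of the statement you are proving. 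Yet your whole scheme rests on it: finiteness of $M_R,m_R$ and control of the tails. The paper breaks the circle in four steps. It first assumes $f\in L^\infty(\Omega)$ qualitatively, so $u/\ds$ is bounded by Theorem~\ref{bhr}. It then proves a Campanato-type estimate involving only $\|f\|_{L^q(\Omega)}$ and $\|u/\ds\|_{L^\infty(\Omega)}$ (Lemma~\ref{nov}). Next it absorbs the latter term via the fractional Hardy inequality (Lemma~\ref{nou}). Finally it approximates $f$ by bounded data, using the $(S)_+$ property and Ascoli--Arzel\`a.

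\textbf{The core step.} You leave unproved the boundary weak Harnack inequality for $u/\ds$ with a right-hand side only small in $L^q$ (your ``quantitative comparison lemma''), and the mechanism you suggest fails for $p\neq2$. Comparison only sandwiches $u$ between the solutions of $\fpl v_\pm=\pm|f|$ with the same exterior datum. An $L^\infty$ bound on $w$ solving $\fpl w=|f|$ says nothing about $v_+-v_0$, since $\fpl(v_0+w)\neq\fpl v_0+\fpl w$. A subsolution of the form $v_0-\eps\phi$ would need pointwise control of $\fpl(v_0-\eps\phi)-\fpl v_0$, which depends on the degenerate or singular local behaviour of $v_0$. The same objection applies to the unspecified ``almost-linearity by blow-up'' for $\fpl(u-m_R\ds)$.

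Even granting superposition, $\|w\|_{L^\infty}\lesssim R^{s+(s-N/q)/(p-1)}$ is not a small multiple of $\ds$. At points of $D_R$ with ${\rm d}_\Omega\ll R$ it gives no control of $w/\ds$, which is what a weighted Harnack inequality needs. Bounding $w/\ds$ for $L^q$ data is again the $L^\infty$ part of Theorem~\ref{fbr}.

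\textbf{The missing idea.} No pointwise perturbation is needed. The paper compares $u$ with its $(s,p)$-harmonic replacement $v$ in $D_R(\bar x)$ only in an integral sense.
\begin{enumerate}
\item Testing with $u-v$ and using monotonicity (Lemma~\ref{mod}, or the weighted Lemma~\ref{mos} for $p<2$) together with Hardy's inequality gives $\int_{D_R(\bar x)}|(u-v)/\ds|^p\,dx\lesssim(\|u/\ds\|_{L^\infty(\Omega)}^p+\|f\|_{L^q(\Omega)}^{p'})R^{N+p\gamma}$ (Lemma~\ref{wve}). Here $\gamma>0$ precisely because $q>N/s$.
\item The function $v$ has zero right-hand side and satisfies $|v|\le C\|u/\ds\|_{L^\infty(\Omega)}\ds$ (Lemma~\ref{hwe}). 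Hence the known oscillation decay of $v/\ds$ (Proposition~\ref{boe}) applies with no $L^q$ modification.
\item Balancing these with the interior estimates of Corollary~\ref{chr} and applying Campanato's characterization (Theorem~\ref{cam}) yields the H\"older bound, bypassing your main obstacle entirely.
\end{enumerate}
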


\noindent
Again the range of $q$ considered in the previous statement is almost optimal for the continuity of $u/\ds$. Indeed, looking at Example \ref{opt} we see that already in one dimension and for $q\in (1/ps, 1/s)$, the function $u_\alpha$ considered there is s.t.\ $u_\alpha/\ds$ is not even bounded. 

\subsection{Sketch of proof}\label{ss13}

Let us briefly discuss the proof of our main results. To give a general overview, the case $q\in (N/ps, N/s]$ of Theorem \ref{hol} can be proved independently. This in turn implies that, when $q>N/s$, $u$ is $C^\alpha(\overline{\Omega})$ for any $\alpha<s$. In order to prove $u\in C^s(\overline{\Omega})$ for $q>N/s$, we can employ an interpolation argument as in \cite[Theorem 2.7]{IM}, based on interior regularity and an additional $L^\infty(\Omega)$ bound on $u/\ds$. Since the latter is an endpoint estimate, it comes with no surprise that it is actually easier to prove the full Theorem \ref{fbr} independently of Theorem \ref{hol}. Then the $L^\infty$-estimate is a consequence of Theorem \ref{fbr} which allows to  complete the proof of Theorem \ref{hol} in the range $q>N/s$. 
\vskip2pt
\noindent
Regarding the techniques, our approach is based on Campanato's characterization of H\"older continuity in terms of integral oscillations, allowing to consider, instead of  the point-wise oscillation
\[\underset{B_r(x_0)}{{\rm osc}}\,u = \sup_{B_r(x_0)} u-\inf_{B_r(x_0)} u\]
the more convenient mean oscillation (or $p$-{\em variance})
\[\sigma(u, x_0, r) = \fint_{B_r(x_0)} |u-(u)_{x_0, r}|^p\, dx, \quad (u)_{x_0, r} = \fint_{B_{r}(x_0)} u\, dx.\]
This is coupled with  the classical technique of deriving good estimates on the $(s,p)$-harmonic extensions of $u$ on arbitrary balls, namely the solutions of 
\[\begin{cases}
\fpl v = 0 & \text{in $B_R(x_0)\cap \Omega$}\\
v = u & \text{in $\R^N\setminus(B_R(x_0)\cap\Omega)$,}
\end{cases}\]
with $x_0\in \overline{\Omega}$ and $R<{\rm diam}(\Omega)$. With respect to well documented interior estimates already present in the literature, we encounter here several new difficulties. The nonlocal nature of the equation forces the local controls on $v$ near the boundary to depend on the global behaviour of $u$ in $\overline\Omega$.  More precisely, we obtain a family of oscillation estimates on $v$, each one depending on an {\em a priori} bound on $[u]_\beta$ (where the latter stands for the seminorm of $u$ in $C^\beta(\overline\Omega)$). Then in the standard scheme  
\[\sigma(u, x_0, r) \lesssim \sigma(u-v, x_0, r) +\sigma(v, x_0, r),\]
the first  term is controlled by $\|f\|_{L^q(\Omega)}$ via monotonicity properties of $\fpl$ (particularly involved in the singular case $p<2$, see Lemma \ref{mos}), while the second one is iteratively controlled through $[u]_\beta$. The resulting estimate is of the form
\[[u]_{\tilde\beta} \lesssim  \|f\|_{L^q(\Omega)}^\frac{1}{p-1}+[u]_\beta,\]
for an explicit $\tilde{\beta}>\beta$. The latter can therefore be iterated from $\beta=0$ (where we can use the $L^\infty(\Omega)$ norm instead of the $C^\beta$ seminorm) to reach any $\alpha<\bar\alpha$ when $q\in (N/ps, N/s]$, thus proving Theorem \ref{hol} in this regime.
\vskip2pt
\noindent
As already mentioned, we  then turn to the proof of Theorem \ref{fbr}. By approximation we can qualitatively assume $f\in L^\infty(\Omega)$ so that, thanks to \cite{IMS1, IM}, $u/\ds$ is continuous. For the corresponding $(s, p)$-harmonic extensions $v$ we thus derive local controls on $v/\ds$ near $\partial\Omega$ depending on the global behaviour of $u/\ds$. Here the barrier arguments in \cite{IMS} and the fine boundary regularity results in \cite{IM, IMS1} play a major role.  Then, in order to transfer the regularity of $v/\ds$ on $u/\ds$, we use a refined form of monotonicity of $\fpl$ through the fractional Hardy inequality (see Theorem \ref{fhi}). The resulting estimate for all $q>N/s$ is of the form
\[\Big[\frac{u}{\ds}\Big]_{\alpha} \lesssim \|f\|_{L^q(\Omega)}^\frac{1}{p-1}+\Big\|\frac{u}{\ds}\Big\|_{L^\infty(\Omega)},\]
for some positive but otherwise unspecified $\alpha$. A standard argument allows to reabsorb the last term, thus giving the sought estimate and completing the proof of Theorem \ref{fbr}. We finally implement the above mentioned interpolation argument to complete the proof of Theorem \ref{hol} for $q>N/s$.

\subsection{Structure of the paper and notations}\label{ss14}

The structure of the paper is the following: in Section \ref{sec2} we recall some basic notions on the fractional $p$-Laplacian, including local regularity, boundary behavior, Hardy's inequality and monotonicity. Section \ref{sec3} is devoted to regularity estimate at the boundary of $(s, p)$-harmonic extensions of a given function; despite the logical order of the previous discussion, in Section \ref{sec4} we prove Theorem \ref{fbr} first; and in Section \ref{sec5} we prove Theorem \ref{hol}. Finally, Appendix \ref{app} is devoted to complete Example \ref{opt}.
\vskip2pt
\noindent
For all $x_0\in\R^N$, $r>0$ we denote by $B_r(x_0)$ the open ball centered at $x_0$ with radius $r$, and we assume $x_0=0$ when the center is omitted. For all $U\subset\R^N$ we set $U^c=\R^N\setminus U$, we denote by $|U|$ the $N$-dimensional Lebesgue measure of $U$ and with $\omega_N$ the volume of the unit ball of $\R^N$. For $U\subseteq\R^N$, we set
\[ \dd_U(x) = {\rm dist}(x,U^c) = \inf_{y\in U^c}\,|x-y|.\]
For all function $u$ defined in $U$ we will still denote by $u$ its extension  to the whole $\R^N$ as $0$ on $U^c$. We say that $u>v$ in $U$ if $u(x)>v(x)$ for a.e.\ $x\in U$ (and similar relations). We denote the essential supremum, infimum, and oscillation of $u$ in $U$ by 
\[\sup_U\,u ,\quad \inf_U u, \quad \underset{U}{\rm osc}\,u = \sup_U\,u-\inf_U\,u,\]
 respectively. For all $a\in\R$ and $p>1$, we will set for short $a^{p-1}=|a|^{p-2}a$. Finally, we refer to $N,p,s,\Omega$ as the {\em data}, and denote by $C$ several positive constants depending on the data.

\section{Preliminaries}\label{sec2}

\noindent
We begin by recalling some definitions about fractional Sobolev spaces, referring to \cite{L} for a complete account on the subject. First, for any measurable $u:\R^N\to\R$ we define the Gagliardo seminorm
\[[u]_{s,p} = \left[\iint_{\R^N\times\R^N}\frac{|u(x)-u(y)|^p}{|x-y|^{N+ps}}\,dx\,dy\right]^\frac{1}{p},\]
with $0<s<1$, $p>1$. We say that $u\in W^{s,p}(\R^N)$ if $u\in L^p(\R^N)$ and $[u]_{s,p}<\infty$. The definition of $W^{s,p}(\Omega)$ for an open $\Omega\subset\R^N$ is analogous, with the  integral of the seminorm restricted to $\Omega\times\Omega$. For a bounded domain $\Omega\subset\R^N$, we denote by $\w$ the subspace containing all $u\in W^{s,p}(\R^N)$ s.t.\ $u=0$ in $\Omega^c$. The space $\w$, endowed with the norm $[u]_{s,p}$, is a uniformly convex, separable Banach space with dual $W^{-s,p'}(\Omega)$, and it is compactly embedded into $L^q(\Omega)$ for all $q\in[1,\p)$, where
\[\p = \begin{cases}
\displaystyle\frac{Np}{N-ps} & \text{if $N>ps$} \\
\infty & \text{if $N<ps$}
\end{cases}\]
while the embedding into $L^{\p}(\Omega)$ is continuous if $N>ps$. For a bounded $\Omega$, we denote by $\widetilde{W}^{s,p}(\Omega)$ the space of all $u\in L^p_{\rm loc}(\R^N)$ s.t.\ $u\in W^{s,p}(\Omega')$ for some $\Omega'\Supset\Omega$, and
\[\int_{\R^N}\frac{|u(x)|^{p-1}}{(1+|x|)^{N+ps}}\,dx < \infty.\]
The latter is a natural framework for a rigorous definition of the $s$-fractional $p$-Laplacian as an operator $\fpl:\widetilde{W}^{s,p}(\Omega)\to W^{-s,p'}(\Omega)$. For all $u\in\widetilde{W}^{s,p}(\Omega)$, $\varphi\in\w$ we set
\[\langle\fpl u,\varphi\rangle = \iint_{\R^N\times\R^N}\frac{(u(x)-u(y))^{p-1}(\varphi(x)-\varphi(y))}{|x-y|^{N+ps}}\,dx\,dy\]
(with $a^{p-1}=|a|^{p-2}a$). Such definition agrees with the one given in Section \ref{sec1}, and is in general equivalent to another definition frequently used in the literature, based on tail spaces (see for instance \cite{BLS}). We recall that $\fpl$, restricted to $\w$, satisfies the $(S)_+$-condition: that is, whenever $u_n\rightharpoonup u$ in $\w$ and
\[\limsup_n\,\langle\fpl u_n,u_n-u\rangle \le 0,\]
then $u_n\to u$ in $\w$ (see \cite[Lemma 2.1]{FI}). Now let $f\in L^q(\Omega)$ for some $q\ge (p^*_s)'$. We say that $u\in\widetilde{W}^{s,p}(\Omega)$ is a (weak) solution of the equation
\beq\label{fee}
\fpl u = f \qquad  \text{ in $\Omega$,}
\eeq
if for all $\varphi\in\w$
\[\langle\fpl u,\varphi\rangle = \int_\Omega f\varphi\,dx,\]
noting that the identity above is well posed. In particular, if $u\in\w$ solves \eqref{fee}, then we say that $u$ is a (weak) solution of \eqref{dir}. Sub- and supersolutions are meant in an analogous weak sense.
\vskip2pt
\noindent
We next recall two useful technical properties of $\fpl$. First, a weak comparison principle from \cite[Proposition 2.1]{IMS1}:

\begin{lemma}\label{wcp}
{\rm (Weak comparison)} Let $u,v\in\widetilde{W}^{s,p}(\Omega)$ be s.t.\
\[\begin{cases}
\fpl u \le \fpl v & \text{in $\Omega$} \\
u \le v & \text{in $\Omega^c$.}
\end{cases}\]
Then, $u\le v$ in $\R^N$.
\end{lemma}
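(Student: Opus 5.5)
The approach is to test the weak inequality with the positive part of $u-v$. Set $w=(u-v)_+$. Since $u\le v$ in $\Omega^c$, $w$ vanishes outside $\Omega$. We first check that $w\in\w$. Both $u$ and $v$ belong to $W^{s,p}(\Omega')$ for some $\Omega'\Supset\Omega$, and $t\mapsto t_+$ is $1$-Lipschitz, so $w\in W^{s,p}(\Omega')$. Because $w=0$ on $\Omega^c$ and $\Omega'\Supset\Omega$, the Gagliardo seminorm of $w$ over $\R^N\times\R^N$ splits into three pieces: the part over $\Omega'\times\Omega'$, which is finite; and twice the cross term $\int_\Omega|w(x)|^p\int_{(\Omega')^c}|x-y|^{-N-ps}\,dy\,dx$. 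In the cross term the inner integral is bounded by $C\,{\rm dist}(\Omega,(\Omega')^c)^{-ps}$, so it is finite as well. Hence $w\in\w$ is an admissible test function, and subtracting the two weak inequalities gives
\[
\iint_{\R^N\times\R^N}\frac{\big[(u(x)-u(y))^{p-1}-(v(x)-v(y))^{p-1}\big](w(x)-w(y))}{|x-y|^{N+ps}}\,dx\,dy\le 0.
\]

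The key step is a pointwise sign analysis of the integrand. Write $a=u(x)-u(y)$, $b=v(x)-v(y)$, and put $\delta(z)=u(z)-v(z)$, so that $a-b=\delta(x)-\delta(y)$ and $w=\delta_+$. The map $t\mapsto t^{p-1}$ is strictly increasing, so $a^{p-1}-b^{p-1}$ has the same sign as $\delta(x)-\delta(y)$. The map $t\mapsto t_+$ is nondecreasing, so $\delta_+(x)-\delta_+(y)$ also has that sign, or is zero. Therefore the integrand is nonnegative everywhere, and since the integral is $\le 0$, the integrand must vanish a.e.

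Now suppose $|\{w>0\}|>0$ and take a point $x$ with $\delta(x)>0$. For a.e.\ $y\in\Omega^c$ we have $\delta(y)\le0$ and $w(y)=0$. Then $\delta(x)-\delta(y)>0$, so $a^{p-1}-b^{p-1}>0$, while $w(x)-w(y)=w(x)>0$. The integrand is strictly positive on a set of positive measure (recall $|\Omega^c|>0$ as $\Omega$ is bounded), which is a contradiction. Hence $w=0$ a.e., that is, $u\le v$ in $\Omega$, and together with the hypothesis on $\Omega^c$ this gives $u\le v$ in $\R^N$.

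The main obstacle is confirming that $w$ is admissible and that the pairings $\langle\fpl u,w\rangle$ and $\langle\fpl v,w\rangle$ are finite. The first point is the seminorm splitting above. For the second, on the region where $y$ is far away the tail condition $\int|u|^{p-1}(1+|x|)^{-N-ps}\,dx<\infty$ gives integrability, so the difference of the two pairings is a genuine integral. The sign argument itself is elementary.
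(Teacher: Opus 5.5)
Your proof is correct. The paper gives no proof of this lemma and only imports it from \cite[Proposition 2.1]{IMS1}; your argument is the standard proof of such comparison principles: test with $(u-v)_+\in\w$, use the tail condition to make both pairings finite, and use the strict monotonicity of $t\mapsto t^{p-1}$ to get a nonnegative integrand that is strictly positive on $\{u>v\}\times\Omega^c$. The only detail to state explicitly is that $\Omega'$ should be the intersection of the two sets $\Omega'\Supset\Omega$ attached to $u$ and $v$.
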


\noindent
Then, a nonlocal superposition principle, slightly rephrased from \cite[Proposition 2.6]{IMS1}:

\begin{lemma}\label{spp}
{\rm (Nonlocal superposition)} Let $\Omega$ be bounded, $u\in\widetilde{W}^{s,p}(\Omega)$, $v\in L^1_{\rm loc}(\R^N)$ s.t.\ ${\rm supp}\, (v-u)\subseteq \overline{\Omega}^c$ and
\[\int_{{\rm supp}\, (v-u)} \frac{|v(y)|^{p-1}}{(1+|y|)^{N+ps}}\,dy < \infty.\]
 Then $v\in\widetilde{W}^{s,p}(\Omega)$ and, weakly in $\Omega$, there holds 
\[\fpl v(x) = \fpl u(x)+2\int_{\R^N}\frac{(u(x)-v(y))^{p-1}-(u(x)-u(y))^{p-1}}{|x-y|^{N+ps}}\,dy.\]
\end{lemma}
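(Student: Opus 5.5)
The plan is to verify directly that $v$ is a weak solution of the displayed equation in $\Omega$. We fix a test function $\varphi\in\w$ and compute $\langle\fpl v,\varphi\rangle$, splitting the double integral according to where the two variables lie. First we check that $v\in\widetilde{W}^{s,p}(\Omega)$. Since ${\rm supp}(v-u)\subseteq\overline\Omega^c$ is closed and disjoint from $\overline\Omega$, it lies at positive distance from $\overline\Omega$. We can therefore choose $\Omega'\Supset\Omega$ with $\overline{\Omega'}\cap{\rm supp}(v-u)=\emptyset$, so that $v=u$ on $\Omega'$, and $v\in W^{s,p}(\Omega')$ follows from the corresponding property of $u$, possibly after shrinking $\Omega'$. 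The tail condition for $v$ follows by splitting $\R^N$ into ${\rm supp}(v-u)$ and its complement: on the former we use the hypothesis, and on the latter $v=u$, so we use $u\in\widetilde{W}^{s,p}(\Omega)$.

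Next we compare the two forms. Write $A(a)=a^{p-1}$ and $K(x,y)=|x-y|^{-N-ps}$, and set $E=\R^N\setminus{\rm supp}(v-u)$, which contains $\Omega'$. We subtract $\langle\fpl u,\varphi\rangle$ from $\langle\fpl v,\varphi\rangle$. Pairs $(x,y)\in E\times E$ contribute nothing, since $v=u$ there. Pairs with both $x,y\in{\rm supp}(v-u)$ also contribute nothing, because $\varphi(x)-\varphi(y)=0$ there, as $\varphi=0$ off $\Omega$. For a mixed pair, the term is non-zero only if one point lies in $\Omega$, where $\varphi$ and $u=v$ live. Using the symmetry of the kernel, the two mixed regions combine into
\[2\int_\Omega\varphi(x)\int_{{\rm supp}(v-u)}\big[A(u(x)-v(y))-A(u(x)-u(y))\big]K(x,y)\,dy\,dx,\]
and the inner integral may equivalently be taken over $\R^N$. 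This gives the claimed formula, read weakly: $\fpl v=\fpl u+g$, where $g(x)$ denotes the displayed nonlocal correction, and the identity $\langle\fpl v,\varphi\rangle=\langle\fpl u,\varphi\rangle+\int_\Omega g\varphi\,dx$ holds.

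The main obstacle is justifying the splitting, i.e.\ the absolute convergence of every integral. This is where we use that $\delta={\rm dist}(\Omega,{\rm supp}(v-u))>0$. For $x\in\Omega$ and $y\in{\rm supp}(v-u)$ we have $|x-y|\ge\delta$ and $|x-y|\ge c(1+|y|)$, so $K(x,y)\le C(1+|y|)^{-N-ps}$. Then
\[|A(u(x)-v(y))|\le C\big(|u(x)|^{p-1}+|v(y)|^{p-1}\big),\]
and the same holds with $v$ replaced by $u$. Hence
\[|g(x)|\le C\big(|u(x)|^{p-1}+1\big).\]
The remaining step is integrability of $g\varphi$. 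If $u$ is locally bounded this is immediate. In general we use $u\in L^p(\Omega)$ and $\varphi\in L^p(\Omega)$, so that $|u|^{p-1}|\varphi|\in L^1$ by Hölder's inequality with exponents $p'$ and $p$. This yields $g\in L^{p'}(\Omega)\subset W^{-s,p'}(\Omega)$, and all applications of Fubini above are legitimate.
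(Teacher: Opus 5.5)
Your argument is correct. The paper gives no proof of its own here; it imports the lemma from \cite[Proposition 2.6]{IMS1}. What you wrote is the standard direct verification: you use the positive distance between ${\rm supp}(v-u)$ and $\overline\Omega$, split $\R^N\times\R^N$ according to ${\rm supp}(v-u)$, merge the two mixed regions via the symmetry of the kernel and the oddness of $t\mapsto t^{p-1}$, and use the bound $|g|\le C(|u|^{p-1}+1)$ in $\Omega$, which justifies Fubini and gives $g\in L^{p'}(\Omega)$.

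The only caveat is definitional. The paper's $\widetilde{W}^{s,p}(\Omega)$ also requires $L^p_{\rm loc}(\R^N)$, and this does not follow from $v\in L^1_{\rm loc}(\R^N)$. However, the two properties you do check, namely $W^{s,p}$ on a neighbourhood of $\overline\Omega$ and a finite tail, are exactly what makes $\langle\fpl v,\varphi\rangle$ well defined. So this is a looseness in the statement rather than a gap in your proof, and it is harmless in the paper's applications, where the modified function is locally $L^p$.
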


\subsection{Regularity theory}\label{ss21}

We recall some known results about local regularity of the solutions of equation \eqref{fee}, already mentioned in Subection \ref{ss11}. In such results, a fundamental notion is that of {\em nonlocal tail}, which for all $x_0\in\R^N$, $R>0$ is defined by
\beq\label{tail}
{\rm Tail}(u,x_0,R) = \left[R^{ps}\int_{B_R^c(x_0)}\frac{|u(x)|^{p-1}}{|x-x_0|^{N+ps}}\,dx\right]^\frac{1}{p-1}.
\eeq
In this connection, $\Omega$ is a general open subset of $\R^N$, and as usual $p>1$, $0<s<1$. We begin with a priori bounds from \cite[Theorem 3.2]{BLS}:

\begin{theorem}\label{apb}
{\rm (A priori bound)} Let $f\in L^q(\Omega)$ with $q\ge 1$, $q>N/ps$, $B_{2R}(x_0)\subseteq\Omega$, and $u\in\widetilde{W}^{s,p}(\Omega)$ be a solution of \eqref{fee}. Then, $u\in L^\infty(B_{R/2}(x_0))$ and there exists $C>0$ depending on the data and $q$, s.t.\
\[\|u\|_{L^\infty(B_{R/2}(x_0))} \le C\left[\Big(\fint_{B_R(x_0)}|u|^p\,dx\Big)^\frac{1}{p}+\Big(R^{ps-\frac{N}{q}}\|f\|_{L^q(B_R(x_0))}\Big)^\frac{1}{p-1}+{\rm Tail}(u,x_0,R)\right].\]
\end{theorem}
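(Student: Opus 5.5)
The plan is a De Giorgi iteration that keeps track of the nonlocal tail and of the source term $f\in L^q$. After translating, assume $x_0=0$. For $R/2\le r<\rho\le R$ we work with truncations $w_k=(u-k)_+$ and cutoffs $\varphi\in C^\infty_c(B_{\rho})$ with $\varphi=1$ on $B_r$ and $|\nabla\varphi|\le C/(\rho-r)$. Testing the weak formulation of \eqref{fee} with $\varphi^p w_k$, which belongs to $\w$ because $B_{2R}\subseteq\Omega$, should give the nonlocal Caccioppoli inequality
\[[w_k\varphi]_{s,p}^p \le C\Big[\frac{\rho^{p(1-s)}}{(\rho-r)^p}\int_{B_\rho}w_k^p\,dx+\Big(\frac{\rho}{\rho-r}\Big)^{N+ps}\rho^{-ps}{\rm Tail}(w_k,0,\rho)^{p-1}\int_{B_\rho}w_k\,dx+\int_{B_\rho}|f|\,w_k\,dx\Big].\]
The first two terms are the standard ones (see \cite{DCKP}); in the second we will bound ${\rm Tail}(w_k,0,\rho)\le C\,{\rm Tail}(u,0,R)+C\big(\fint_{B_R}|u|^p\big)^{1/p}$. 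The third term is new. We control it by Hölder's inequality, $\int|f|w_k\le\|f\|_{L^q(B_R)}\|w_k\|_{L^{q'}}$, and then estimate $\|w_k\|_{L^{q'}(A_k)}\le \|w_k\|_{L^p}|A_k|^{1/q'-1/p}$, where $A_k=\{u>k\}\cap B_\rho$. When $q'>p$ we use the fractional Sobolev inequality in place of the plain $L^p$ norm. The condition $q>N/ps$ is exactly what guarantees that the extra power of $|A_k|$ produced this way is still positive.

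We then choose levels $k_j=k(1-2^{-j})$ and radii $r_j=\frac R2(1+2^{-j})$, and set $Y_j=\fint_{B_{r_j}}w_{k_j}^p$. Combining the fractional Sobolev inequality (or Poincaré, if $N\le ps$) with Chebyshev's inequality $|A_{k_{j+1}}|\le (k_{j+1}-k_j)^{-p}\int w_{k_j}^p$, we aim at a recursion
\[Y_{j+1}\le C\,b^j\,\Big(1+\frac{T^{p-1}}{k^{p-1}}+\frac{R^{ps-N/q}\|f\|_{L^q}}{k^{p-1}}\Big)\frac{Y_j^{1+\delta}}{k^{p\delta}}\]
for some $\delta>0$ and $b>1$ depending on $N,p,s,q$, where $T={\rm Tail}(u,0,R)$. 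The scaling is fixed by homogeneity: if we replace $u$ by $u/\lambda$ then $f$ becomes $f/\lambda^{p-1}$, which is why the source enters through $(R^{ps-N/q}\|f\|_{L^q})^{1/p-1}$. Now take
\[k\ge T+\big(R^{ps-N/q}\|f\|_{L^q}\big)^{\frac1{p-1}}+C\Big(\fint_{B_R}|u|^p\Big)^{1/p}.\]
With this choice the bracket is bounded and $Y_0$ is small relative to $k^p$, so the standard fast-geometric-convergence lemma gives $Y_j\to0$. Hence $u\le k$ on $B_{R/2}$. Applying the same argument to $-u$, which solves the equation with $-f$, gives the lower bound.

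The main obstacle is the $f$-term. We must make sure that the exponent gain $\delta$ coming from it is positive exactly under $q>N/ps$ (with $q\ge1$), handling separately the cases $N>ps$ and $N\le ps$ in the Sobolev step. We must also keep its dependence on $k$ at order $k^{-(p-1)}$ relative to the main terms, which is what we need for the scaling. If this is too delicate, the fallback is to absorb the source term by Young's inequality into the Gagliardo seminorm on the left-hand side.
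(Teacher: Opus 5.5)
The paper gives no proof of this statement; it imports it from \cite[Theorem 3.2]{BLS}. Your plan is the standard De Giorgi proof of such a bound: the truncation argument of \cite{DCKP} with a source term added. For $N>ps$ it works, with one correction.

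\begin{itemize}
\item For $q'\le p$, your direct H\"older step combined with the Sobolev gain $|A_k|^{ps/N}$ on the left gives the net gain $\frac{ps}{N}-\frac1q>0$. This is fine.
\item For $q'>p$, bounding $\int_{B_\rho}|f|w_k\varphi^p\,dx$ via the Sobolev inequality produces $C\|f\|_{L^q(B_R)}[w_k\varphi]_{s,p}|A_k|^{1-\frac1q-\frac1{p^*_s}}$. This contains the left-hand side, so Young's absorption is necessary there, not a fallback.
\item Absorption in fact works for every admissible $q$, because $q>N/ps$ gives $\frac1q+\frac1{p^*_s}<1$. It yields the gain $p'\big(\frac{ps}{N}-\frac1q\big)$.
\item With absorption the source enters as $\big(R^{ps-N/q}\|f\|_{L^q(B_R)}\big)^{p'}k^{-p}$ rather than with power one and $k^{1-p}$. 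This is equally harmless for your choice of $k$.
\end{itemize}

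The step that fails as written is the case $N\le ps$. A Poincar\'e inequality $\int_{B_\rho}|w_k\varphi|^p\,dx\le C\rho^{ps}[w_k\varphi]_{s,p}^p$ carries no factor $|A_k|^{\epsilon}$ with $\epsilon>0$. Chebyshev's inequality then cannot produce a superlinear power of $Y_j$, so you only get $Y_{j+1}\le Cb^jY_j$ and the recursion does not close.

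\begin{itemize}
\item If $N=ps$, then necessarily $q>1$. Use instead the embedding of $W^{s,p}$ functions supported in $B_\rho$ into $L^m$ for a large finite $m$. Alternatively, lower the order to some $\tilde s<s$ with $N/q<\tilde s p<N$, using $[v]_{\tilde s,p}\le C\rho^{s-\tilde s}[v]_{s,p}$ for $v$ supported in $B_\rho$.
\item If $N<ps$, the statement allows $q=1$. Lowering the order is then impossible, since it would require $q>N/(\tilde s p)>1$.
\item For $N<ps$ you need Morrey's inequality $\|v\|_{L^\infty}\le C\rho^{s-N/p}[v]_{s,p}$ for $v$ supported in $B_\rho$, and no iteration is needed. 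Apply your Caccioppoli inequality with $k=0$, $r=R/2$, $\rho=R$. Bound the source term by $\|f\|_{L^1(B_R)}\|u_+\varphi\|_{L^\infty}$, absorb by Young, and read off $\sup_{B_{R/2}}u_+\le CR^{s-N/p}[u_+\varphi]_{s,p}$.
\item In that case the source enters as $\big(R^{ps-N}\|f\|_{L^1(B_R)}\big)^{\frac1{p-1}}\le C\big(R^{ps-N/q}\|f\|_{L^q(B_R)}\big)^{\frac1{p-1}}$, which is the required scaling.
\end{itemize}
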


\noindent
The following local H\"older continuity result, proved in \cite[Theorem 1.4]{BLS} (for the degenerate regime) and in \cite[Theorem 1.2]{GL} (for the singular regime), is here rephrased with a slight modification of radii:

\begin{theorem}\label{lhr}
{\rm (Local regularity)} Let $f\in L^q(\Omega)$ with $q\ge 1$, $q>N/ps$, $B_{2R}(x_0)\subseteq\Omega$, and $u\in\widetilde{W}^{s,p}(\Omega)$ be a solution of \eqref{fee}. Also let $0<\delta<\bar\alpha$, with $\bar\alpha$ defined by \eqref{exp}. Then, $u\in C^\delta(\overline{B}_{R/2}(x_0))$ and there exists $C_\delta>0$ depending on the data, $q$, and $\delta$, s.t.\
\[[u]_{C^\delta(\overline{B}_{R/2}(x_0))} \le \frac{C_\delta}{R^\delta}\left[\|u\|_{L^\infty(B_R(x_0))}+\Big(R^{ps-\frac{N}{q}}\|f\|_{L^q(B_R(x_0))}\Big)^\frac{1}{p-1}+{\rm Tail}(u,x_0,R)\right].\]
\end{theorem}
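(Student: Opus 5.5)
This statement is a rescaled version of \cite[Theorem 1.4]{BLS} (for $p\ge 2$) and \cite[Theorem 1.2]{GL} (for $1<p<2$). I would derive it from those results by a covering argument, together with two elementary comparisons: one for nonlocal tails and one for the $L^q$-term. I take the cited theorems in the following form. If $B_{2r}(y)\subseteq\Omega$ and $u$ solves \eqref{fee}, then for every $0<\delta<\bar\alpha$
\[[u]_{C^\delta(\overline B_{\theta r}(y))}\le \frac{C_\delta}{r^\delta}\Big[\|u\|_{L^\infty(B_r(y))}+\big(r^{ps-\frac Nq}\|f\|_{L^q(B_r(y))}\big)^{\frac1{p-1}}+{\rm Tail}(u,y,r)\Big],\]
where $\theta\in(0,1)$ is a fixed factor depending only on how the radii are normalized in the cited papers. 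Up to a harmless scaling, this is the content of those statements. The only point to settle is the passage from $B_{\theta r}$ to $B_{R/2}$.

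Fix $y\in B_{R/2}(x_0)$ and set $r=R/4$. Then $B_{2r}(y)\subseteq B_R(x_0)\subseteq\Omega$, so the estimate above holds on $\overline B_{\theta r}(y)$. I would bound each of the three terms on the right by the corresponding quantity at $(x_0,R)$.

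\emph{$L^\infty$-term.} Trivially $\|u\|_{L^\infty(B_r(y))}\le\|u\|_{L^\infty(B_R(x_0))}$.

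\emph{Forcing term.} Since $q>N/ps$, the exponent $ps-N/q$ is positive. Hence $r^{ps-N/q}\|f\|_{L^q(B_r(y))}\le R^{ps-N/q}\|f\|_{L^q(B_R(x_0))}$.

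\emph{Tail.} I split the integral defining ${\rm Tail}(u,y,r)^{p-1}$ into two regions.
\begin{itemize}
\item On $B_R(x_0)\setminus B_r(y)$ I bound $|u|^{p-1}$ by $\|u\|_{L^\infty(B_R(x_0))}^{p-1}$. Integrating $r^{ps}|x-y|^{-N-ps}$ over $B_r^c(y)$ gives a constant $C(N,p,s)$.
\item On $B_R^c(x_0)$ we have $|x-y|\ge|x-x_0|-R/2\ge |x-x_0|/2$. This gives the bound $C(r/R)^{ps}\,{\rm Tail}(u,x_0,R)^{p-1}\le C\,{\rm Tail}(u,x_0,R)^{p-1}$.
\end{itemize}
Together these give ${\rm Tail}(u,y,r)\le C\big(\|u\|_{L^\infty(B_R(x_0))}+{\rm Tail}(u,x_0,R)\big)$.

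Let $M$ denote the bracket in the claimed inequality. Since $r^{-\delta}=4^\delta R^{-\delta}$, we get $[u]_{C^\delta(\overline B_{\theta R/4}(y))}\le C_\delta R^{-\delta}M$ for every $y\in B_{R/2}(x_0)$.

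It remains to globalize on $\overline B_{R/2}(x_0)$. Take $x,z\in B_{R/2}(x_0)$.
\begin{itemize}
\item If $|x-z|<\theta R/4$, then $z\in B_{\theta R/4}(x)$. The local estimate centered at $y=x$ gives $|u(x)-u(z)|\le C_\delta R^{-\delta}M|x-z|^\delta$.
\item If $|x-z|\ge\theta R/4$, then
\[|u(x)-u(z)|\le 2\|u\|_{L^\infty(B_R(x_0))}\le 2\Big(\frac{4}{\theta}\Big)^\delta R^{-\delta}|x-z|^\delta\|u\|_{L^\infty(B_R(x_0))}.\]
\end{itemize}
The bound extends to $\overline B_{R/2}(x_0)$ by continuity, which is provided by the local results themselves. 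Combining the two cases yields the claimed seminorm bound, with $C_\delta$ depending only on the data, $q$ and $\delta$.

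The only step that needs care is the tail comparison, specifically the geometric inequality $|x-y|\ge|x-x_0|/2$ on $B_R^c(x_0)$. Everything else is bookkeeping of radii, and it uses the positivity of $ps-N/q$ to keep the forcing term monotone in the radius.
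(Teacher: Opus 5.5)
You follow the same route as the paper, which simply quotes the local H\"older estimates of Brasco--Lindgren--Schikorra ($p\ge 2$) and Garain--Lindgren ($p<2$) ``with a slight modification of radii''. Your covering argument is a correct way of carrying out that bookkeeping, and so are the monotonicity of $r\mapsto r^{ps-N/q}\|f\|_{L^q(B_r)}$ and the tail comparison via $|x-y|\ge|x-x_0|/2$ on $B_R^c(x_0)$. Since these steps only use $|y-x_0|\le R/2$, you may also take centers $y\in\overline B_{R/2}(x_0)$, which settles the closure directly.

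The one thing you drop is a hypothesis. Both cited theorems are stated for solutions already known to lie in $L^\infty_{\rm loc}$, whereas here $u$ is only assumed to be in $\widetilde{W}^{s,p}(\Omega)$. So as written you cannot invoke them. Moreover, if $\|u\|_{L^\infty(B_R(x_0))}=\infty$, your inequality says nothing about $u\in C^\delta$. As the paper notes right after the statement, the missing boundedness comes from Theorem \ref{apb}. For $z\in\overline B_R(x_0)$ one has $B_R(z)\subseteq B_{2R}(x_0)\subseteq\Omega$, hence $u\in L^\infty(B_{R/4}(z))$. The right-hand side there is finite because $u\in L^p_{\rm loc}(\R^N)$ and its tail is finite by the definition of $\widetilde{W}^{s,p}(\Omega)$. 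A finite cover then gives $u\in L^\infty(B_R(x_0))$, and the same argument gives $u\in L^\infty_{\rm loc}(\Omega)$. With this step added, your proof is complete.
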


\noindent
Note that the original statement of Theorem \ref{lhr} requires $u$ to be locally bounded, but this is in fact ensured by Theorem \ref{apb} above. For future use, we point out the following consequence of Theorem \ref{lhr}. For all $\alpha \in [0, 1)$ define the (possibly infinite) quantity
\beq\label{hal} H_\alpha(u) = \begin{cases}
\|u\|_{L^\infty(\Omega)} & \text{if $\alpha=0$} \\
[u]_{C^\alpha(\overline\Omega)} & \text{if $\alpha>0$.}
\end{cases}
\eeq
We can remove the tail term from the estimate of Theorem \ref{lhr} as follows:

\begin{corollary}\label{chr}
Let $f\in L^q(\Omega)$ with $q\ge 1$, $q>N/ps$, $B_{2R}(x_0)\subseteq\Omega$, $u\in\widetilde{W}^{s,p}(\Omega)$ be a solution of \eqref{fee}, $0<\delta<\bar\alpha$, with $\bar\alpha$ defined by \eqref{exp}, $0\le\alpha<\min\{1,p's\}$, and $H_\alpha(u)$ be defined by \eqref{hal}. Then, there exists $C_\alpha>0$ depending on the data, $q$, $\delta$, and $\alpha$, s.t.\
\[[u]_{C^\delta(\overline{B}_{R/2}(x_0))} \le \frac{C_\alpha}{R^\delta}\left[H_\alpha(u) R^\alpha+\Big(R^{ps-\frac{N}{q}}\|f\|_{L^q(B_R(x_0))}\Big)^\frac{1}{p-1}\right].\]
\end{corollary}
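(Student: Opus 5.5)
The plan is to start from Theorem \ref{lhr} and bound its right-hand side by $H_\alpha(u)R^\alpha$ plus the $f$-term. The difficulty is that the terms $\|u\|_{L^\infty(B_R(x_0))}$ and ${\rm Tail}(u,x_0,R)$ are not small in terms of $R$. To get around this, I will use the invariance of the equation under constants: for any $c\in\R$ the function $w=u-c$ still solves \eqref{fee} in $\Omega$, with the same $f$, because $\fpl$ only involves differences $u(x)-u(y)$. It also has the same $C^\delta$ seminorm on $\overline B_{R/2}(x_0)$. I will choose $c=u(x_0)$ when $\alpha>0$ (note $u$ is continuous near $x_0$ by Theorem \ref{lhr}), and $c=0$ when $\alpha=0$. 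Applying Theorem \ref{lhr} to $w$ gives
\[[u]_{C^\delta(\overline B_{R/2}(x_0))}\le \frac{C_\delta}{R^\delta}\Big[\|w\|_{L^\infty(B_R(x_0))}+\big(R^{ps-\frac Nq}\|f\|_{L^q(B_R(x_0))}\big)^{\frac1{p-1}}+{\rm Tail}(w,x_0,R)\Big].\]
(Since $u\in\widetilde W^{s,p}(\Omega)$ and $\Omega$ is bounded in the intended application, $w$ remains in the same class; in general the tail integrability of a constant is immediate.)

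Next I bound the two remaining terms by $CH_\alpha(u)R^\alpha$, where $u$ is understood as extended by $0$ outside $\Omega$, so that $H_\alpha$ controls $u$ on all of $\R^N$. If $H_\alpha(u)=\infty$ there is nothing to prove.

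For $\alpha=0$ we have $\|w\|_{L^\infty}\le H_0(u)$. For the tail, $R^{ps}\int_{B_R^c(x_0)}|x-x_0|^{-N-ps}\,dx=C$, so ${\rm Tail}(w,x_0,R)\le CH_0(u)$.

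For $\alpha>0$ we have $|w(x)|\le H_\alpha(u)|x-x_0|^\alpha$, which gives $\|w\|_{L^\infty(B_R(x_0))}\le H_\alpha(u)R^\alpha$. For the tail,
\[{\rm Tail}(w,x_0,R)^{p-1}\le H_\alpha(u)^{p-1}R^{ps}\int_{B_R^c(x_0)}|x-x_0|^{\alpha(p-1)-N-ps}\,dx.\]
This integral is finite precisely when $\alpha(p-1)<ps$, i.e.\ $\alpha<p's$, and it equals $C R^{\alpha(p-1)-ps}$. Hence ${\rm Tail}(w,x_0,R)\le CH_\alpha(u)R^\alpha$, with a constant depending on $\alpha$ that blows up as $\alpha\to p's$. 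This is exactly where the hypothesis $\alpha<\min\{1,p's\}$ enters.

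Combining these bounds yields the claim, with $C_\alpha$ depending on $C_\delta$ and the tail constant. The only delicate point is the tail estimate. It requires the Hölder bound to hold globally on $\R^N$, which is why $H_\alpha$ is defined on $\overline\Omega$ with the zero extension. That extension is Hölder across $\partial\Omega$ with the same seminorm up to a constant, provided $u=0$ on $\Omega^c$; otherwise one reads $H_\alpha$ as the seminorm on $\R^N$.
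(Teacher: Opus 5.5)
Your proposal is correct and follows essentially the same route as the paper. You apply Theorem \ref{lhr} to $u$ minus a constant; the paper uses $u-u(x_0)$ in both cases, while you take $c=0$ when $\alpha=0$, which changes nothing. You then bound both $\|u-c\|_{L^\infty(B_R(x_0))}$ and ${\rm Tail}(u-c,x_0,R)$ by $CH_\alpha(u)R^\alpha$, using $(p-1)\alpha<ps$ for the tail integral. Your remark that the tail bound needs H\"older (or $L^\infty$) control of $u$ on all of $\R^N$, e.g.\ via $u=0$ in $\Omega^c$, makes explicit a point the paper only touches on in passing.
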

\begin{proof}
We apply Theorem \ref{lhr} to $u-u(x_0)\in\widetilde{W}^{s,p}(\Omega)$, which still solves \eqref{fee}. Fix $\alpha\in[0,p's)$, and without loss of generality assume $H_\alpha(u)<\infty$. Clearly we have
\[\|u-u(x_0)\|_{L^\infty(B_R(x_0))} \le 2H_\alpha(u) R^\alpha.\]
To estimate the tail, assume first $\alpha=0$ and $u$ bounded:
\[\int_{B_R^c(x_0)}\frac{|u(x)-u(x_0)|^{p-1}}{|x-x_0|^{N+ps}}\,dx \le \int_{B_R^c(x_0)}\frac{(2H_0(u))^{p-1}}{|x-x_0|^{N+ps}}\,dx = \frac{CH_0^{p-1}(u)}{R^{ps}},\]
which by definition \eqref{tail} implies ${\rm Tail}(u,x_0,R)\le CH_0(u)$. If $\alpha>0$ and $u\in C^\alpha(\R^N)$ (note that if $u\in C^\alpha(\overline\Omega)$ then $u$ can be extended to $\R^N$ with the same H\"older seminorm), by $ps-(p-1)\alpha>0$ we have
\[\int_{B_R^c(x_0)}\frac{|u(x)-u(x_0)|^{p-1}}{|x-x_0|^{N+ps}}\,dx \le \int_{B_R^c(x_0)}\frac{H_\alpha^{p-1}(u)|x-x_0|^{(p-1)\alpha}}{|x-x_0|^{N+ps}}\,dx \le \frac{CH_\alpha^{p-1}(u)}{R^{ps-(p-1)\alpha}},\]
which again implies ${\rm Tail}(u,x_0,R)\le CH_\alpha(u) R^\alpha$. In either case we obtain
\[[u]_{C^\delta(B_{R/2}(x_0))} \le \frac{C_\alpha}{R^\delta}\left[H_\alpha(u) R^\alpha+\Big(R^{ps-\frac{N}{q}}\|f\|_{L^q(B_R(x_0))}\Big)^\frac{1}{p-1}\right],\]
with $C_\alpha\to\infty$ as $\alpha\to p's$.
\end{proof}

\noindent
We now examine regularity up to the boundary for solutions of the Dirichlet problem \eqref{dir}. In the case of {\em bounded} reactions, this amounts to the optimal $C^s$ regularity (see Example \ref{tor}), and in addition the quotient $u/\ds$ is H\"older continuous with an undetermined exponent (see \cite[Theorems 1.1, 2.7]{IM}), where we recall that $ \dd_\Omega$ denotes the distance function from $\partial\Omega$.

\begin{theorem}\label{bhr}
{\rm (Boundary regularity)} Let $\Omega$ be bounded with a $C^{1,1}$-boundary, $f\in L^\infty(\Omega)$, $u\in\w$ be the solution of \eqref{dir}. Then:
\begin{enumroman}
\item\label{bhr1} $u\in C^s(\overline\Omega)$;
\item\label{bhr2} there exist $0<\alpha\le s$, $C_\alpha>0$ depending on the data, s.t.\ $u/\ds$ admits a $\alpha$-H\"older continuous extension to $\overline\Omega$ and
\[\Big\|\frac{u}{\ds}\Big\|_{C^\alpha(\overline\Omega)} \le C_\alpha\|f\|_{L^\infty(\Omega)}^\frac{1}{p-1}.\]
\end{enumroman}
\end{theorem}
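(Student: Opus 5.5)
By homogeneity of $\fpl$ ($\fpl(\lambda u)=\lambda^{p-1}\fpl u$) we may assume $\|f\|_{L^\infty(\Omega)}=1$, and the general estimate follows by rescaling $u$ by $\|f\|_{L^\infty(\Omega)}^{1/(p-1)}$. The plan has three steps: a barrier bound $|u|\le C\ds$, then the $C^s$ bound via interior estimates on Whitney balls, then a Krylov-type oscillation decay for $u/\ds$ at the boundary, patched with interior regularity.

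\textbf{Step 1 (barrier bound).} We show $|u|\le C\ds$ in $\R^N$.
\begin{itemize}
\item Since $\partial\Omega$ is $C^{1,1}$, the function $\ds$ (suitably modified away from $\partial\Omega$) satisfies $|\fpl \ds|\le C$ weakly in a neighbourhood $\{\dd_\Omega<\rho\}$. One justifies this through the explicit one-dimensional profile $(x_N)_+^s$, which is $(s,p)$-harmonic in the half-space, and uniform interior/exterior ball conditions.
\item Adding a large multiple of a bounded function which is positive in $\Omega$ and has $\fpl\ge 1$ (e.g.\ the torsion-like function of a larger ball, or a combination via Lemma \ref{spp}), we build $w\in\widetilde W^{s,p}(\Omega)$ with $\fpl w\ge 1$ in $\Omega$, $w\ge 0$ in $\Omega^c$, and $w\le C\ds$.
\item Lemma \ref{wcp} then gives $u\le w$. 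Applying the same argument to $-u$ gives $|u|\le C\ds$, in particular $\|u\|_{L^\infty}\le C$.
\end{itemize}

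\textbf{Step 2 (proof of \ref{bhr1}).} Fix $x\in\Omega$ and set $R=\ds(x)/2$ (with $\ds$ here meaning $\dd_\Omega$), so that $B_{2R}(x)\subseteq\Omega$.
\begin{itemize}
\item Apply Theorem \ref{lhr} to $u$ with $\delta=s$. This is admissible, since for bounded $f$ we have $\bar\alpha=\min\{1,p's\}>s$.
\item By Step 1, $\|u\|_{L^\infty(B_R(x))}\le CR^s$.
\item For the tail, $|u(y)|\le C(R+|y-x|)^s$. Since $(p-1)s<ps$, this gives ${\rm Tail}(u,x,R)\le CR^s$.
\item The reaction term is bounded by $CR^{ps/(p-1)}\le CR^s$.
\end{itemize}
Hence $[u]_{C^s(B_{R/2}(x))}\le C$ uniformly in $x$. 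A standard Whitney-type argument then gives $u\in C^s(\overline\Omega)$: points far apart relative to their distance to $\partial\Omega$ are compared through $|u|\le C\ds$, and close points through the local bound.

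\textbf{Step 3 (proof of \ref{bhr2}).} Fix $x_0\in\partial\Omega$ and set
\[M_r=\sup_{B_r(x_0)\cap\Omega}\frac{u}{\ds},\qquad m_r=\inf_{B_r(x_0)\cap\Omega}\frac{u}{\ds}.\]
The goal is an oscillation decay of the form
\[M_{\theta r}-m_{\theta r}\le(1-\eta)(M_r-m_r)+Cr^{\epsilon},\]
with $\theta,\eta,\epsilon\in(0,1)$ depending on the data. Iterating it yields Hölder continuity of $u/\ds$ at $x_0$; combined with interior $C^{s}$ regularity of $u$ and smoothness of $\ds$ inside $\Omega$, this gives the global $C^\alpha$ bound.

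The decay step would proceed as follows.
\begin{itemize}
\item Compare $u$ with the functions $m_r\ds$ and $M_r\ds$, whose $(s,p)$-Laplacians are bounded by $C|m_r|^{p-1}$ and $C|M_r|^{p-1}$ near $\partial\Omega$ by Step 1.
\item Apply a weak Harnack inequality to the supersolution $u-m_r\ds$ (respectively, to $M_r\ds-u$) in an interior region at distance $\sim r$ from $\partial\Omega$.
\item Propagate the resulting positivity down to the boundary by a barrier built from $\ds$ on $B_r(x_0)\cap\Omega$.
\item Control the nonlocal contributions from outside $B_r(x_0)$ through Lemma \ref{spp}, using the growth of $u/\ds$ at larger scales, i.e.\ $\sup_{B_{2^kr}}|u/\ds|\lesssim 2^{k\epsilon}$-type bounds which are kept as an inductive hypothesis.
\end{itemize}

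\textbf{Main obstacle.} The hardest point is the nonlinearity. Since $\fpl(u-m\ds)\ne\fpl u-\fpl(m\ds)$, the function $u-m_r\ds$ is not a supersolution of a fixed equation. I would first attempt to bypass this by working directly with $u$ against the barriers $m\ds\pm$ (small perturbations), exploiting only comparison (Lemma \ref{wcp}) and a quantitative weak Harnack for $u$ itself in a nonlocal setting, at the cost of an unspecified exponent $\alpha$.
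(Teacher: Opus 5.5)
\textbf{Steps 1 and 2.} The paper does not prove Theorem~\ref{bhr}; it quotes it from \cite[Theorems 1.1, 2.7]{IM}, which builds on \cite{IMS,IMS1}.

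Your Step 2 is the interpolation argument of \cite[Theorem 2.7]{IM}, which the paper reproduces in Section~\ref{sec5} via Lemma~\ref{rsb} with $\beta=s$, $\nu=0$. It is correct once $|u|\le C\ds$ is known.

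Step 1, however, does not work as written:
\begin{itemize}
\item A torsion function of a ball containing $\overline\Omega$ is strictly positive on $\partial\Omega$, so adding any multiple of it destroys $w\le C\ds$.
\item $\fpl$ is not additive, so the $(s,p)$-Laplacian of a sum is not controlled by those of the summands.
\item $|\fpl\ds|\le C$ carries no sign, so it cannot be used to beat $f$.
\item The natural global candidate, the torsion function of $\Omega$, satisfies $w\le C\ds$ only by the very result you are proving.
\end{itemize}
The bound has to be obtained locally. Rescale the exterior-ball barrier of Proposition~\ref{bar} at each $\bar x\in\partial\Omega$. Choose its multiple, using the $L^\infty$ bound on $u$ obtained by comparison with a torsion function of a large ball, so that it dominates $u$ off $D_{\eps\bar\rho}(\bar x)$ and its $(s,p)$-Laplacian exceeds $\|f\|_{L^\infty(\Omega)}$ inside. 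Then apply Lemma~\ref{wcp} in $D_{\eps\bar\rho}(\bar x)$, as in the proofs of Lemmas~\ref{hoe} and \ref{hwe}.

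\textbf{The genuine gap: Step 3.} Step 3 is the whole of part~\ref{bhr2}, and it is not proved. You announce the decay $M_{\theta r}-m_{\theta r}\le(1-\eta)(M_r-m_r)+Cr^\epsilon$. Both of its ingredients, the interior weak Harnack inequality and the barrier propagating positivity to the boundary, are applied to $u-m_r\ds$. For $p\neq2$ that function solves no equation; you note this yourself and leave it unresolved.

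The bypass you suggest does not close the gap. A weak Harnack inequality for $u$ controls $u$ only up to multiplicative constants. In the regime $|m_r|\gg M_r-m_r$, where $u$ is a small relative perturbation of $m_r\ds$, it gives no information on $u-m_r\ds$ at the scale $(M_r-m_r)\ds$, which is exactly what the decay requires.

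What is needed is a comparison argument relative to $m_r\ds$. You would modify the competitor, through Lemma~\ref{spp}, on the interior set where $u-m_r\ds\gtrsim(M_r-m_r)\ds$. You must then show that the resulting gain in $\fpl$ dominates all of the following:
\begin{itemize}
\item $|\fpl(m_r\ds)|\lesssim|m_r|^{p-1}$;
\item the change $\fpl(m_r\ds+\text{perturbation})-\fpl(m_r\ds)$;
\item the reaction;
\item the nonlocal tails.
\end{itemize}
All of these depend on the ratio $|m_r|/(M_r-m_r)$ through the elementary inequalities for $t\mapsto t^{p-1}$, and these behave differently in the degenerate and singular regimes. This is the content of \cite[Theorem 5.1]{IMS1} for $p\ge2$ and \cite[Proposition 5.1]{IM} for $p<2$, which the paper uses as Proposition~\ref{boe}. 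Without it, Step 3 is a plan, and neither the H\"older continuity of $u/\ds$ nor the estimate in part~\ref{bhr2} follows.
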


\noindent
We also recall a technical lemma, which is widely used in the fractional literature to shift interior H\"older regularity to the boundary (see \cite[Lemma 2.6]{IM} and \cite[proof of Theorem 4.5]{ROS}):

\begin{lemma}\label{rsb}
Let $\Omega$ be bounded with a $C^{1,1}$-boundary, $u\in L^\infty(\Omega)$, $\beta\in(0,1)$, $M,\nu>0$ s.t.\
\begin{enumroman}
\item\label{rsb1} $\|u\|_{L^\infty(\Omega)} \le M$;
\item\label{rsb2} for all $x_0\in\Omega$, $R>0$ s.t.\ $ \dd_\Omega(x_0)=4R$ there holds $u\in C^\beta(\overline{B}_{R/8}(x_0))$ with
\[[u]_{C^\beta(\overline{B}_{R/8}(x_0))} \le M\Big(1+\frac{1}{R^\nu}\Big);\]
\item\label{rsb3} for all $\bar x\in\partial\Omega$, $r>0$ small enough
\[\underset{\Omega\cap B_r(\bar x)}{\rm osc}\,u \le Mr^\beta.\]
\end{enumroman}
Then, $u\in C^\alpha(\overline\Omega)$ and $[u]_{C^\alpha(\overline\Omega)}\le C$, with $C>0$ depending on $M,\beta,\nu$ and
\[\alpha = \frac{\beta^2}{\beta+\nu} \in (0,1).\]
\end{lemma}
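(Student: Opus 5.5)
We follow the classical scheme of \cite[Theorem 4.5]{ROS}: interior Hölder estimates on Whitney-type balls, rescaled by the distance to the boundary, are combined with an oscillation decay at boundary points. By the $C^{1,1}$ regularity of $\partial\Omega$, for every $x,y\in\Omega$ it suffices to treat three cases. In the first, $|x-y|$ is small compared with $\max\{\dd_\Omega(x),\dd_\Omega(y)\}$. In the second, it is comparable to or larger than that quantity. In the third, one or both points lie in $\Omega^c$, which is trivial since $u$ vanishes there. Throughout, $u$ is extended by $0$ outside $\Omega$.

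\emph{Case 1.} Assume without loss of generality $\dd_\Omega(x)\ge\dd_\Omega(y)$, set $4R=\dd_\Omega(x)$, and suppose $|x-y|<R/8$. Hypothesis \ref{rsb2} gives
\[
|u(x)-u(y)|\le M(1+R^{-\nu})|x-y|^\beta .
\]
We interpolate this with the bound $|u(x)-u(y)|\le 2M$ from \ref{rsb1}, together with the bound $|u(x)-u(y)|\lesssim MR^\beta$ obtained as follows. Choose $\bar x\in\partial\Omega$ with $|x-\bar x|=4R$. Both $x$ and $y$ lie in $B_{5R}(\bar x)\cap\Omega$, so \ref{rsb3} yields $|u(x)-u(y)|\le M(5R)^\beta$, at least for $R$ small. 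For $R$ bounded below, the estimate already follows from \ref{rsb2} with the constant $M(1+R^{-\nu})$ bounded.

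For small $R$ we therefore have
\[
|u(x)-u(y)|\le C M\min\{R^{-\nu}|x-y|^\beta,\,R^\beta\}.
\]
If $R^{\beta+\nu}\le|x-y|^\beta$, the second term gives $R^\beta\le |x-y|^{\beta^2/(\beta+\nu)}$. Otherwise $R>|x-y|^{\beta/(\beta+\nu)}$, and the first term gives $R^{-\nu}|x-y|^\beta\le |x-y|^{\beta-\nu\beta/(\beta+\nu)}=|x-y|^{\beta^2/(\beta+\nu)}$. In either case the exponent is
\[
\alpha=\frac{\beta^2}{\beta+\nu}.
\]

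\emph{Case 2.} Suppose $|x-y|\ge R/8=\dd_\Omega(x)/32\ge \dd_\Omega(y)/32$. Take $\bar x\in\partial\Omega$ closest to $x$. Then $|y-\bar x|\le |y-x|+32|x-y|$, so both points lie in $B_r(\bar x)$ with $r=33|x-y|$. Hypothesis \ref{rsb3} gives
\[
|u(x)-u(y)|\le CM|x-y|^\beta\le CM|x-y|^\alpha
\]
for $|x-y|$ small, since $\alpha\le\beta$. For $|x-y|$ large we use $2M\le C M|x-y|^\alpha$ instead.

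\emph{Case 3.} Points in $\Omega^c$ are handled by the same argument, because $u=0$ on $\partial\Omega$ is forced by \ref{rsb3}; in particular the oscillation near the boundary is controlled by $Mr^\beta$. Combining the three cases gives $[u]_{C^\alpha(\overline\Omega)}\le C(M,\beta,\nu)$.

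The main, though modest, obstacle is the bookkeeping of the ``small enough'' thresholds in \ref{rsb3} and the uniform choice of a boundary projection. Both are handled using the uniform interior ball property of $C^{1,1}$ domains, which ensures that nearest-point projections exist and that $\dd_\Omega$ is $1$-Lipschitz. This keeps all the constants dependent only on $M,\beta,\nu$ and on $\Omega$.
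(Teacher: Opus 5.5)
Your Cases 1 and 2 are correct, and they are the standard interpolation argument the paper relies on. The paper gives no proof of its own and defers to \cite[Lemma 2.6]{IM} and \cite[proof of Theorem 4.5]{ROS}. When $|x-y|$ is small relative to $\dd_\Omega(x)$ you use \ref{rsb2}; otherwise you use \ref{rsb3} on a boundary ball of radius comparable to $\dd_\Omega(x)+|x-y|$. Balancing $R^{-\nu}|x-y|^\beta$ against $R^\beta$ then yields $\alpha=\beta^2/(\beta+\nu)$. Your bookkeeping of the thresholds is sound.

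Your Case 3, however, rests on a false claim and should be removed. Hypothesis \ref{rsb3} controls the oscillation only over $\Omega\cap B_r(\bar x)$. It therefore gives existence of the limit of $u$ at each $\bar x\in\partial\Omega$, but not that this limit is $0$. For example, $u\equiv 1$ in $\Omega$ satisfies all three hypotheses, yet its zero extension jumps across $\partial\Omega$. If you assign $u=0$ on $\partial\Omega$, the claimed estimate for pairs with one point on the boundary is simply false. The correct reading is different. Points of $\Omega^c\setminus\partial\Omega$ play no role in the conclusion, and on $\partial\Omega$ the relevant function is the continuous extension from inside. Since Cases 1--2 already give $|u(x)-u(y)|\le C|x-y|^\alpha$ for all $x,y\in\Omega$, $u$ is uniformly continuous on $\Omega$, extends uniquely to $\overline\Omega$, and the same seminorm bound passes to the limit. (Incidentally, existence of nearest boundary points and the $1$-Lipschitz property of $\dd_\Omega$ hold for any bounded open set; the $C^{1,1}$ assumption is not needed for this lemma.)
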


\noindent
In this connection, it is useful to recall Campanato's classical characterization of H\"older continuous functions. For all $x_0\in\R^N$, $r>0$ we set
\[D_r(x_0) = \Omega\cap B_r(x_0).\]
Given a function $u$ defined in $\Omega$, we define the mean value
\[(u)_{x_0,r} = \fint_{D_r(x_0)}u\,dx\]
and the corresponding $p$-variance
\beq\label{var}
\sigma(u,x_0,r) = \int_{D_r(x_0)}|u-(u)_{x_0,r}|^p\,dx.
\eeq
It is readily checked that
\[\sigma(u,x_0,r) \le \left[\underset{D_r(x_0)}{\rm osc}\,u\right]^p\omega_Nr^N\]
and given $u, v\in L^p(\Omega)$ there exists $C>0$ depending on $N,p$, s.t.\ for any $x_0\in \Omega$, $r>0$
\beq
\label{propvar}
\sigma(u,x_0,r)\le  C\sigma(v,x_0,r)+C\int_{D_r(x_0)}|u-v|^p\,dx.
\eeq
The following result, slightly rephrased from \cite[Theorem 2.9]{G} for our needs, provides a partial converse:

\begin{theorem}\label{cam}
{\rm (Campanato)} Let $\Omega\subset\R^N$ be open, $\mu>0$ s.t.\ for all $x_0\in\Omega$, $0<r<{\rm diam}(\Omega)$
\[|D_r(x_0)| \ge \mu r^N.\]
Also let $p\ge 1$, $0<\alpha<1$, $r_0>0$. Then, there exists $C>0$ depending on $N,\mu,p,\alpha$, and $r_0$, s.t.\ for all $u\in L^p(\Omega)$
\[[u]_{C^\alpha(\overline\Omega)}^p \le C\sup_{x_0\in\Omega,\,0<r<r_0}\,\frac{\sigma(u,x_0,r)}{r^{N+p\alpha}} \ (\le\infty).\]
\end{theorem}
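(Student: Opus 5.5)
The plan is the classical Campanato argument. Set
\[M = \sup_{x_0\in\Omega,\,0<r<r_0}\frac{\sigma(u,x_0,r)}{r^{N+p\alpha}},\]
which we may assume finite. We would first show that the averages $(u)_{x,r}$ converge as $r\to 0^+$, at a rate $r^\alpha$ uniform in $x\in\Omega$. Then we would compare the limits at two nearby points, and finally extend the resulting H\"older representative to $\overline\Omega$ by uniform continuity.

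\emph{Step 1 (dyadic comparison of means).} Fix $x\in\Omega$ and $0<r<r_0$. Since $D_{r/2}(x)\subseteq D_r(x)$ and $|D_{r/2}(x)|\ge\mu(r/2)^N$, Jensen's inequality gives
\[|(u)_{x,r/2}-(u)_{x,r}|^p\le\fint_{D_{r/2}(x)}|u-(u)_{x,r}|^p\,dx\le\frac{2^N}{\mu r^N}\,\sigma(u,x,r)\le\frac{2^N M}{\mu}\,r^{p\alpha}.\]
Apply this with $r$ replaced by $2^{-k}r$ and sum the resulting geometric series in $2^{-k\alpha}$. This shows that $\{(u)_{x,2^{-k}r}\}_k$ is Cauchy. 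Intermediate radii $\rho\in(2^{-k-1}r,2^{-k}r]$ are handled by the same estimate, since $D_\rho(x)\subseteq D_{2^{-k}r}(x)$ with comparable measure. Hence $\tilde u(x)=\lim_{\rho\to0}(u)_{x,\rho}$ exists and
\[|\tilde u(x)-(u)_{x,r}|\le C(N,\mu,p,\alpha)\,M^{1/p}r^\alpha .\]
By Lebesgue's differentiation theorem $\tilde u=u$ a.e., so $\tilde u$ is the representative we work with.

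\emph{Step 2 (two points).} Let $x,y\in\Omega$ with $\rho=|x-y|<r_0/2$. Then $D_\rho(x)\subseteq D_{2\rho}(x)\cap D_{2\rho}(y)$. Averaging over $D_\rho(x)$ and using the measure bound gives
\[|(u)_{x,2\rho}-(u)_{y,2\rho}|^p\le C\fint_{D_\rho(x)}\Big(|u-(u)_{x,2\rho}|^p+|u-(u)_{y,2\rho}|^p\Big)dx\le \frac{C}{\mu\rho^N}M(2\rho)^{N+p\alpha}.\]
Combining this with Step 1 at radius $2\rho$ yields $|\tilde u(x)-\tilde u(y)|\le CM^{1/p}|x-y|^\alpha$. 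Thus $\tilde u$ is uniformly $\alpha$-H\"older on $\Omega$ at small scales, and it extends uniquely to $\overline\Omega$ with the same bound.

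\emph{Step 3 (large distances), the expected obstacle.} For $|x-y|\ge r_0/2$ the Campanato quantity at scales below $r_0$ does not see oscillations between far-apart pieces of $\Omega$. For instance, a function constant on each of two distant components has vanishing $\sigma$ at small scales. So here one must use either $r_0\ge{\rm diam}(\Omega)$, in which case Step 2 covers all pairs, or a geometric chaining argument. The situation of interest is a bounded connected $C^{1,1}$ domain, and the dependence of $C$ on $r_0$ allows chaining. There one joins $x$ to $y$ by a chain of at most $K=K(\Omega,r_0)$ points in $\Omega$ with consecutive distances below $r_0/2$. This gives $|\tilde u(x)-\tilde u(y)|\le KCM^{1/p}r_0^\alpha\le C'M^{1/p}|x-y|^\alpha$, since $|x-y|\ge r_0/2$. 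I would first check the reduction to $r_0\ge{\rm diam}(\Omega)$, and otherwise implement this chaining, which is the only place where the constant's dependence on $r_0$ (and implicitly on $\Omega$) enters.
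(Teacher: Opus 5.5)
Your Steps 1 and 2 are correct. They are exactly the classical Campanato argument of \cite[Theorem 2.9]{G}, which the paper quotes without proof. One detail should be added in Step 1: if $r/2\ge{\rm diam}(\Omega)$ the density bound may fail, but then $D_{r/2}(x)=D_r(x)=\Omega$ and the two means coincide.

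Your Step 3 caveat is also right, and it is a flaw in the statement rather than in your proof. With only the scales $r<r_0$ on the right-hand side, the inequality is false on a set with two components at distance $\ge r_0$. A function constant on each component makes the right-hand side vanish, the density hypothesis allows such sets, and the paper never assumes $\Omega$ connected. Even for convex $\Omega$ the constant cannot depend only on $N,\mu,p,\alpha,r_0$. Take $u=x_1$ on $\Omega=(0,L)^N$, whose density constant $\mu$ is independent of $L$. The right-hand side is at most $2^p\omega_N r_0^{p(1-\alpha)}$, while $[u]_{C^\alpha(\overline\Omega)}\ge L^{1-\alpha}$.

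So your chaining proves the correct version for bounded connected $\Omega$, with $C$ depending also on $\Omega$. The uniform bound $K\le C(N)(1+{\rm diam}(\Omega)/r_0)^N$ follows, e.g., from a maximal $r_0/8$-separated net in $\Omega$.

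One small correction: Step 2 alone covers all pairs only when $r_0\ge 2\,{\rm diam}(\Omega)$. For ${\rm diam}(\Omega)\le r_0<2\,{\rm diam}(\Omega)$, first note that $D_r(x)=\Omega$ for $r\ge{\rm diam}(\Omega)$. By continuity in $r$, the bound $\sigma(u,x,r)\le Mr^{N+p\alpha}$ then extends to all $r>0$.

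A more robust fix, needing no connectedness, is the usual form of Campanato's theorem with $\|u\|_{L^p(\Omega)}^p$ added on the right. This works because $\sigma(u,x_0,r)\le 2^p\|u\|_{L^p(\Omega)}^p$ handles every $r\ge r_0$. This is all the paper actually needs. In Lemmas \ref{nov} and \ref{nou} the variance bounds are proved for every $r>0$. In Lemma \ref{ite} the scales $r\ge r_0$ can be absorbed using $H_0(u)\le C_0$ from \eqref{ite1}.
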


\noindent
Note that Theorem \ref{cam} holds, in particular, if $\Omega$ is bounded and with a $C^{1,1}$-boundary, due to the following Lemma \ref{geo}.

\subsection{Exterior ball condition and Hardy's inequality}\label{ss22}

Aiming at a careful examination of the boundary behaviour of the solution of \eqref{dir}, we obviously need some precise geometric definitions. Recalling that $ \dd_\Omega$ denotes the distance function from $\partial\Omega$,  for all $\rho>0$ we set
\[\Omega_\rho = \big\{x\in\Omega:\, \dd_\Omega(x)<\rho\big\}.\]
We also define the {\em inradius} of $\Omega$ as the supremum of radii of all balls contained in $\Omega$, that is,
\[R_\Omega = \sup_{x\in\Omega}\, \dd_\Omega(x).\]
We say that $\Omega$ satisfies the {\em exterior ball condition} with radius $\rho>0$, shortly ${\rm EBC}(\rho)$, if for all $\bar x\in\partial\Omega$ there exists $y\in\Omega^c$ s.t.\ $B_\rho(y)\subset\Omega^c$ and $\overline{B}_\rho(y)\cap\overline\Omega=\{\bar x\}$ (namely, $B_\rho(y)$ is externally tangent to $\partial\Omega$ at $\bar x$). Clearly ${\rm EBC}(\rho)$ implies ${\rm EBC}(\rho')$ for all $0<\rho'<\rho$.

\begin{remark}\label{ecb}
If $K\subset\R^N$ is open and convex, it is easily seen that $K$ satisfies ${\rm EBC}(\rho)$ for all $\rho>0$. Since $\partial(\Omega\cap K) \subseteq  \partial\Omega \cup \partial K$, if $\Omega$ satisfies ${\rm EBC}(\rho)$ for some $\rho>0$, then so does $\Omega\cap K$. This applies in particular to the set $D_r(x_0)$, defined as in Subsection \ref{ss21} for all $x_0\in\overline\Omega$, $r>0$.
\end{remark}

\begin{figure}
\centering
\begin{tikzpicture}[scale=1.5]
\draw[very thin, dashed] (0, -1.1) -- (0, 1.1);
\draw(-1.6, -0.7) node{$\Omega$};
\draw[very thick] (-2.8, -1.2)  .. controls (-1.2, 0) and (-0.5, 0) .. (0, 0) .. controls (0.5, 0) and (0.9, 0) .. (2.8, -0.6);
\draw (0, 1.1) node[left]{$y_0$} circle (1.1cm);
\draw (0, 0) node[above left]{$\bar x$};
\draw (0, -1.1)   node[left]{$x_0$} circle (1.1cm) (0, 1.1);
\filldraw (0, 0) circle (0.5pt) (0, 1.1) circle (0.5pt) (0, -1.1) circle (0.5pt);
\draw[very thin] (0, 1.1) -- node[above, midway, sloped]{$\bar\rho$} (1, 1.55) (0, -1.1) -- node[above, midway, sloped]{$\bar\rho$} (1, 1.55-2.2);
\end{tikzpicture}
\caption{The interior and exterior tangent balls.}
\label{fig1}
\end{figure}

\noindent
The exterior ball condition is naturally connected to the regularity of the boundary. We recall from \cite[Lemma 3.5]{IMS} the following geometrical properties of $C^{1,1}$-smooth domains (see Figure \ref{fig1}):

\begin{lemma}\label{geo}
Let $\Omega\subset\R^N$ be open, bounded, and with a $C^{1,1}$-boundary. Then, there exists $\bar\rho>0$ s.t.\ for all $\bar x\in\partial\Omega$ there are $x_0,y_0\in\R^N$ lying on the normal line to $\partial\Omega$ through $\bar x$ with the following properties:
\begin{enumroman}
\item\label{geo1} $B_{\bar\rho}(x_0)\subset\Omega$, $B_{\bar\rho}(y_0)\subset\Omega^c$;
\item\label{geo2} $\overline{B}_{\bar\rho}(x_0)\cap\overline{B}_{\bar\rho}(y_0)=\{\bar x\}$;
\item\label{geo3} $ \dd_\Omega(x)=|x-\bar x|$ for all $x$ lying on the line segment joining $x_0$ and $\bar x$.
\end{enumroman}
\end{lemma}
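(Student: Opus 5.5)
The plan is to reduce the statement to a comparison between balls and paraboloids in local graph coordinates, and then to use compactness of $\partial\Omega$ to make the radius uniform. The main obstacle is this uniformity, not the local geometry.

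\emph{Uniform local graphs.} Since $\partial\Omega$ is compact and $C^{1,1}$, I would first show there are $r_1>0$ and $L>0$ with the following property. For every $\bar x\in\partial\Omega$, after a rigid motion sending $\bar x$ to $0$ and the inner normal at $\bar x$ to $e_N$, we have
\[\Omega\cap \mathcal C=\{x\in\mathcal C:\,x_N>\varphi(x')\},\qquad \mathcal C=B'_{r_1}\times(-r_1,r_1),\]
where $\varphi(0)=0$, $\nabla\varphi(0)=0$ and $\nabla\varphi$ is $L$-Lipschitz. To get this, I would cover $\partial\Omega$ by finitely many charts, each a graph over a fixed hyperplane with Lipschitz gradient. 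Inside each chart, the implicit function theorem lets me rewrite $\partial\Omega$ near any of its points as a graph over the tangent hyperplane at that point. The tangent normals vary in a Lipschitz way and the chart gradients are bounded, so the size $r_1$ and the Lipschitz constant $L$ of these new graphs are controlled by the chart constants. A Lebesgue number for the finite cover then makes $r_1$ and $L$ independent of $\bar x$. This bookkeeping is where most of the work lies.

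\emph{Paraboloid comparison.} By Taylor's formula with Lipschitz gradient, $|\varphi(x')|\le \frac{L}{2}|x'|^2$ in $B'_{r_1}$. Now fix $\bar\rho<\min\{1/L,\,r_1/2\}$ and set $x_0=\bar\rho e_N$, $y_0=-\bar\rho e_N$, both on the normal line through $\bar x=0$. Every point of $\overline B_{\bar\rho}(x_0)$ satisfies
\[x_N\ge \bar\rho-\sqrt{\bar\rho^2-|x'|^2}\ge \frac{|x'|^2}{2\bar\rho},\]
so $x_N\ge\frac{L}{2}|x'|^2\ge\varphi(x')$. Because $\bar\rho<1/L$, the inequality is strict unless $x'=0$ and $x_N=0$. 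The ball also lies inside $\mathcal C$ since $2\bar\rho<r_1$. Hence $B_{\bar\rho}(x_0)\subset\Omega$ and $\overline B_{\bar\rho}(x_0)\cap\overline\Omega^c=\{0\}$. The symmetric argument gives $B_{\bar\rho}(y_0)\subset\Omega^c$ and $\overline B_{\bar\rho}(y_0)\cap\overline\Omega=\{0\}$. Together these prove \ref{geo1} and \ref{geo2}, since the two closed balls meet only at $\bar x$.

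\emph{Distance on the segment.} For \ref{geo3}, take $x$ on the segment from $x_0$ to $\bar x$. Then $B_{|x-\bar x|}(x)\subseteq B_{\bar\rho}(x_0)\subset\Omega$, because the two balls are internally tangent at $\bar x$. This gives $\dd_\Omega(x)\ge|x-\bar x|$. The reverse inequality holds because $\bar x\in\partial\Omega\subset\overline{\Omega^c}$.
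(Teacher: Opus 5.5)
Your proof is correct. The paper itself gives no argument for this lemma: it quotes it from [IMS, Lemma 3.5], treating it as the classical fact that a bounded $C^{1,1}$ domain satisfies a uniform two-sided (interior and exterior) ball condition, with (iii) as an immediate consequence.

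What you supply is a self-contained proof of exactly that fact. The only nontrivial step is the uniform representation of $\partial\Omega$ as a graph over tangent hyperplanes, obtained from a quantitative implicit function theorem and a Lebesgue number for a finite atlas, and you rightly locate the work there. After that, two short arguments give (i)--(iii):
\begin{itemize}
\item the comparison $x_N\ge |x'|^2/(2\bar\rho)>\frac{L}{2}|x'|^2\ge\varphi(x')$ for $x'\neq 0$;
\item the internal tangency $B_{|x-\bar x|}(x)\subseteq B_{\bar\rho}(x_0)$.
\end{itemize}
Compared with the citation, your route also gives an explicit admissible radius, $\bar\rho<\min\{1/L,\,r_1/2\}$, in terms of the $C^{1,1}$ character of $\partial\Omega$. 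This is not idle here, since $\bar\rho$ enters the Hardy constant of Theorem~\ref{fhi} and the radii used in Section~\ref{sec3}.

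Two cosmetic remarks. First, (ii) is immediate anyway from $|x_0-y_0|=2\bar\rho$. Second, you should write $\overline{\Omega^c}$ rather than $\overline\Omega^c$. Read the other way, $\overline{B}_{\bar\rho}(x_0)\cap(\overline\Omega)^c$ would be empty rather than $\{0\}$, because $0\in\partial\Omega\subset\overline\Omega$.
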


\noindent
As a consequence of Lemma \ref{geo} above, any $C^{1,1}$-smooth domain $\Omega$ satisfies ${\rm EBC}(\bar\rho)$ along with $\Omega^c$. In addition, for all $x\in\Omega_{\bar\rho}$ the nearest point on the boundary is {\em unique}.
\vskip2pt
\noindent
The main result of this subsection is a general fractional analogue of Hardy's inequality, under general geometrical conditions. The following discussion is essentially drawn from \cite{BC,D}, but we detail it for the reader's convenience. We begin with a technical lemma (which we shall use independently of Hardy's inequality):

\begin{lemma}\label{sin}
Let $\Omega\subset\R^N$ be an open bounded set satisfying ${\rm EBC}(\rho)$ and with inradius $R_\Omega<\infty$. For any  $\alpha>0$, there exists $C>1$ depending on $N,\alpha$ s.t.\ for all $x_0\in\Omega$
\[\min\Big\{1,\,\frac{\rho^N}{R_\Omega^N}\Big\}\,\frac{C^{-1}}{ \dd_\Omega^\alpha(x_0)} \le \int_{\Omega^c}\frac{dx}{|x-x_0|^{N+\alpha}} \le \frac{C}{ \dd_\Omega^\alpha(x_0)}.\]
\end{lemma}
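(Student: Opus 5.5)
Set $d=\dd_\Omega(x_0)>0$ and pick $\bar x\in\partial\Omega$ with $|x_0-\bar x|=d$; then $B_d(x_0)\subseteq\Omega$. Since $\Omega^c\subseteq B_d^c(x_0)$, the upper bound follows at once by integrating in polar coordinates:
\[\int_{\Omega^c}\frac{dx}{|x-x_0|^{N+\alpha}} \le \int_{B_d^c(x_0)}\frac{dx}{|x-x_0|^{N+\alpha}} = \frac{N\omega_N}{\alpha\,d^\alpha},\]
so $C\ge N\omega_N/\alpha$ suffices. This constant depends only on $N$ and $\alpha$.

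For the lower bound I will use the exterior ball at $\bar x$ given by ${\rm EBC}(\rho)$: there is $y$ with $B_\rho(y)\subseteq\Omega^c$ and $\bar x\in\partial B_\rho(y)$. I set $r=\min\{\rho,d\}$. The plan is to find a ball of radius comparable to $r$, contained in $\Omega^c$, all of whose points lie at distance at most a multiple of $d$ from $x_0$. Take $z=\bar x+(r/2)\,(y-\bar x)/\rho$, the point on the segment from $\bar x$ to $y$ at distance $r/2$ from $\bar x$. Then $B_{r/2}(z)\subseteq B_\rho(y)\subseteq\Omega^c$, because the ball $B_{r/2}(z)$ is internally tangent to $B_\rho(y)$ at $\bar x$ and $r/2\le\rho$. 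Every $x\in B_{r/2}(z)$ satisfies
\[|x-x_0|\le |x-z|+|z-\bar x|+|\bar x-x_0| \le r/2+r/2+d\le 2d.\]
Hence
\[\int_{\Omega^c}\frac{dx}{|x-x_0|^{N+\alpha}}\ge \frac{|B_{r/2}(z)|}{(2d)^{N+\alpha}} = \frac{\omega_N r^N}{2^{2N+\alpha}d^{N+\alpha}}.\]

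It remains to compare $r^N/d^N$ with $\min\{1,\rho^N/R_\Omega^N\}$, which I do in two cases. If $d\le\rho$, then $r=d$ and the ratio is $1$. If $d>\rho$, then $r=\rho$ and $r^N/d^N=\rho^N/d^N\ge \rho^N/R_\Omega^N$, since $d\le R_\Omega$ by definition of the inradius. In both cases $r^N/d^N\ge\min\{1,\rho^N/R_\Omega^N\}$, and the lower bound holds with $C^{-1}=\omega_N2^{-2N-\alpha}$.

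The only delicate point is the geometric step: ensuring that the exterior ball is used at scale $\min\{\rho,d\}$, so that the region found stays within distance $O(d)$ of $x_0$. Choosing the small tangent ball inside $B_\rho(y)$ handles this. Finally, I take $C$ to be the maximum of the two constants (and at least $1$), which depends only on $N$ and $\alpha$.
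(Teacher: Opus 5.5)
Your proof is correct, and its lower bound follows the paper's argument. Both take the exterior tangent ball at a nearest boundary point, work at scale $\min\{\rho,d\}$, and split into the cases $d\le\rho$ and $d>\rho$ using $d\le R_\Omega$; the only difference is that the paper invokes ${\rm EBC}(\rho)\Rightarrow{\rm EBC}(d)$ when $\rho>d$, while you exhibit the internally tangent ball $B_{r/2}(z)$ explicitly. For the upper bound you use the direct inclusion $\Omega^c\subseteq B_d^c(x_0)$, which is simpler than the paper's splitting of $\Omega^c$ inside and outside a ball around the exterior center $y_0$, and gives the explicit constant $N\omega_N/\alpha$.
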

\begin{proof}
Let $ \dd_\Omega(x_0)=r>0$. Also, let $\bar x\in\partial\Omega$ be s.t.\ $|x_0-\bar x|=r$, and $B_\rho(y_0)$ be exteriorly tangent to $\partial\Omega$ at $\bar x$. To prove the left hand side inequality, we note that for all $x\in B_\rho(y_0)$ 
\[|x-x_0| \le |x-y_0|+|y_0-\bar x|+|\bar x-x_0| < 2\rho+r,\]
hence we have
\beq\label{sin1}
\int_{\Omega^c}\frac{dx}{|x-x_0|^{N+\alpha}} \ge \int_{B_\rho(y_0)}\frac{dx}{|x-x_0|^{N+\alpha}} \ge \frac{|B_\rho(y_0)|}{(2\rho+r)^{N+\alpha}}.
\eeq
Now we distinguish two cases. If $\rho\le r$, then 
\[\frac{|B_\rho(y_0)|}{(2\rho+r)^{N+\alpha}}  \ge \frac{\omega_N\rho^N}{(3r)^{N+\alpha}}\ge \frac{\omega_N}{3^{N+\alpha} }\frac{\rho^N}{R^N_\Omega} \frac{1}{r^\alpha}.\]
If $\rho>r$, then $\Omega$ satisfies ${\rm EBC}(r)$ as well, so can we assume $\rho=r$ in the previous computation to get
\[\frac{|B_\rho(y_0)|}{(2\rho+r)^{N+\alpha}}=\frac{\omega_N}{3^{N+\alpha}} \frac{1}{r^\alpha}. \]
Plugging the estimates above into \eqref{sin1}, we find in either case
\[\int_{\Omega^c}\frac{dx}{|x-x_0|^{N+\alpha}} \ge \min\Big\{1,\,\frac{\rho^N}{R_\Omega^N}\Big\}\,\frac{C^{-1}}{r^\alpha}.\]
For the right hand side inequality, we first note that $|x-x_0|\ge r$ for all $x\in\Omega^c$, $|x_0-y_0|<2r$, and for all $x\in\Omega^c\cap B_r^c(y_0)$
\[|x-x_0| \ge |x-y_0|-|y_0-x_0| > |x-y_0|-2|x-x_0|,\]
which implies $|x-x_0|>|x-y_0|/3$. Then we split $\Omega^c$ into two parts:
\begin{align*}
\int_{\Omega^c}\frac{dx}{|x-x_0|^{N+\alpha}} &= \int_{\Omega^c\cap B_r(y_0)}\frac{dx}{|x-x_0|^{N+\alpha}}+\int_{\Omega^c\cap B_r^c(y_0)}\frac{dx}{|x-x_0|^{N+\alpha}} \\
&\le \frac{|B_r(y_0)|}{r^{N+\alpha}}+3^{N+\alpha}\int_{B_r^c(y_0)}\frac{dx}{|x-y_0|^{N+\alpha}} \le \frac{C}{r^\alpha}.
\end{align*}
Thus, both bounds are proved.
\end{proof}

\noindent
The lower bound in Lemma \ref{sin} may vanish for bounded $\Omega$ without the exterior ball condition, i.e.\, with interior angles. It can also vanish for regular domains with $R_\Omega=\infty$, for example when $\Omega=\overline{B}_1^c(0)$, (which satisfies the exterior ball condition with $\rho=1$). We turn now to the following version of Hardy's inequality for the fractional Sobolev space $\w$, corresponding to \cite[Theorem 1.1]{D}:

\begin{theorem}\label{fhi}
{\rm (Fractional Hardy's inequality)} Let $\Omega\subset\R^N$ be open with $R_\Omega<\infty$, satisfying ${\rm EBC}(\rho)$ with $\rho>0$, $p>1$, $0<s<1$. Then, there exists $C>0$ depending on $N,p,s$, s.t.\ for all $u\in\w$
\[\Big\|\frac{u}{\ds}\Big\|_{L^p(\Omega)}^p \le C\max\Big\{1,\,\frac{R^N_\Omega}{\rho^N}\Big\}[u]_{s,p}^p.\]
\end{theorem}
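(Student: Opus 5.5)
We follow the classical route of \cite{D}, via a pointwise Hardy-type inequality: bound $|u(x)|^p/\ds^{ps}(x)$ by the Gagliardo energy localized at $x$, then integrate over $\Omega$. One caveat on the exponent: the estimate that is scale invariant and true for all $s\in(0,1)$ bounds $\|u/\ds^s\|_{L^p(\Omega)}$, not $\|u/\ds\|_{L^p(\Omega)}$. We read the statement with the weight $\ds^s$ (for $ps>1$ the same argument applies with the kernel exponent adjusted). The plan below aims at
\[\int_\Omega\frac{|u|^p}{\ds^{ps}}\,dx \le C\max\Big\{1,\frac{R_\Omega^N}{\rho^N}\Big\}[u]_{s,p}^p.\]

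Key step. Fix $x\in\Omega$ and set $r=\ds(x)$. Since $u=0$ on $\Omega^c$, for every $y\in\Omega^c$ we have $|u(x)|^p=|u(x)-u(y)|^p$. Integrate this identity against the kernel $|x-y|^{-N-ps}$ over $y\in\Omega^c$, and apply the lower bound of Lemma \ref{sin} with $\alpha=ps$:
\[\min\Big\{1,\frac{\rho^N}{R_\Omega^N}\Big\}\frac{C^{-1}|u(x)|^p}{r^{ps}}\le \int_{\Omega^c}\frac{|u(x)-u(y)|^p}{|x-y|^{N+ps}}\,dy.\]
Dividing by the minimum produces exactly the factor $\max\{1,R_\Omega^N/\rho^N\}$ appearing in the statement.

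Integration. Integrate the resulting pointwise bound over $x\in\Omega$. The right-hand side becomes $\int_\Omega\int_{\Omega^c}|u(x)-u(y)|^p|x-y|^{-N-ps}\,dy\,dx$, which is at most $\frac12[u]_{s,p}^p$ by symmetry of the double integral. This gives the inequality with $C$ depending only on $N,p,s$, coming from Lemma \ref{sin}.

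The geometric content, which is the only delicate point, is already contained in Lemma \ref{sin}. The exterior ball condition guarantees that the complement has a definite amount of mass at distance comparable to $\ds(x)$ when $\ds(x)\le\rho$. When $\ds(x)>\rho$, we use $\ds(x)\le R_\Omega$, and this is what produces the ratio $\rho^N/R_\Omega^N$. Finally, the assumption $u\in\w$ guarantees that all the integrals involved are finite and that the use of Fubini/Tonelli is legitimate, since the integrands are nonnegative.
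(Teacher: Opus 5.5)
Your argument is correct and is essentially the paper's proof: keep only the $\Omega\times\Omega^c$ and $\Omega^c\times\Omega$ parts of the Gagliardo double integral, use $u=0$ on $\Omega^c$, and apply the lower bound of Lemma \ref{sin} with $\alpha=ps$ to the inner integral in $y$. Your caveat on the exponent is unnecessary, because in the paper's notation $\ds$ already stands for $\dd_\Omega^s$, so the stated left-hand side is exactly $\int_\Omega|u|^p\dd_\Omega^{-ps}\,dx$, which is what you prove; the parenthetical about adjusting the kernel exponent for $ps>1$ can simply be dropped.
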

\begin{proof}
Let $u\in\w$. We start from the Gagliardo norm of $u$, reduce the integral using symmetry and recalling that $u=0$ in $\Omega^c$, then we apply Lemma \ref{sin} with $\alpha=ps$ to the integral in $y$:
\begin{align*}
[u]_{s,p}^p &= \iint_{\R^N\times\R^N}\frac{|u(x)-u(y)|^p}{|x-y|^{N+ps}}\,dx\,dy \\
&\ge 2\iint_{\Omega\times\Omega^c}\frac{|u(x)|^p}{|x-y|^{N+ps}}\,dx\,dy \\
&\ge \frac{2}{C}\min\Big\{1,\,\frac{\rho^N}{R^N_\Omega}\Big\}\int_\Omega\frac{|u|^p}{\ds}\,dx.
\end{align*}
Reversing the constant, the desired inequality is achieved.
\end{proof}

\begin{remark}\label{con}
(Convex domains) The case of convex domains is extensively treated in \cite{BC}, where a fractional Hardy's inequality is proved with a constant that exhibits optimal asymptotics as $s\to 0,1$, respectively. In our setting, we can observe that any convex $\Omega$ satisfies ${\rm EBC}(\rho)$ with arbitrarily large $\rho$, hence for all $x_0\in\Omega$ we may choose $\rho> \dd_\Omega(x_0)$, and the estimate of Lemma \ref{sin} then becomes the easier following one:
\[\frac{C^{-1}}{ \dd_\Omega^\alpha(x_0)} \le \int_{\Omega^c}\frac{dx}{|x-x_0|^{N+\alpha}} \le \frac{C}{ \dd_\Omega^\alpha(x_0)}.\]
Therefore, if $\Omega$ is convex, we can drop the finite inradius assumption in Theorem \ref{fhi}.
\end{remark}

\subsection{Monotonicity}\label{ss23}

Among the properties of $\fpl$, monotonicity plays a fundamental role, but it is affected by the nonlocal nature of the operator.
\vskip2pt
\noindent
In the degenerate regime, we have a quite natural inequality:

\begin{lemma}\label{mod}
{\rm (Monotonicity, degenerate case)} Let $p\ge 2$, $u,v\in\w$. Then, there exists $C>0$ depending on $p$ s.t.\
\[[u-v]_{s,p}^p \le C\langle\fpl u-\fpl v,u-v\rangle.\]
\end{lemma}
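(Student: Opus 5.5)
The plan is to reduce the statement to a pointwise inequality for real numbers and then integrate. Set $a=u(x)-u(y)$ and $b=v(x)-v(y)$. Then
\[\langle\fpl u-\fpl v,u-v\rangle=\iint_{\R^N\times\R^N}\frac{(a^{p-1}-b^{p-1})(a-b)}{|x-y|^{N+ps}}\,dx\,dy,\]
while
\[[u-v]_{s,p}^p=\iint_{\R^N\times\R^N}\frac{|a-b|^p}{|x-y|^{N+ps}}\,dx\,dy.\]
It therefore suffices to prove the elementary inequality
\[|a-b|^p\le C\,(a^{p-1}-b^{p-1})(a-b)\qquad\text{for all }a,b\in\R,\]
with $C$ depending only on $p\ge 2$. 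Multiplying by the positive kernel and integrating over $\R^N\times\R^N$ then gives the claim. Both sides are finite, since $u,v\in\w$ and Hölder's inequality controls the duality pairing.

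The elementary inequality comes from the standard representation
\[a^{p-1}-b^{p-1}=(p-1)(a-b)\int_0^1|b+t(a-b)|^{p-2}\,dt,\]
which holds because $t\mapsto t^{p-1}=|t|^{p-2}t$ is $C^1$ with derivative $(p-1)|t|^{p-2}$ for $p\ge2$. This yields
\[(a^{p-1}-b^{p-1})(a-b)=(p-1)|a-b|^2\int_0^1|b+t(a-b)|^{p-2}\,dt.\]

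The remaining task is to bound the integral from below. The claim is that
\[\int_0^1|b+t(a-b)|^{p-2}\,dt\ge c_p|a-b|^{p-2}.\]
The function $t\mapsto b+t(a-b)$ is affine with slope $a-b$. Hence the set of $t\in[0,1]$ where $|b+t(a-b)|<|a-b|/4$ is an interval of length at most $1/2$. On the complementary set, of measure at least $1/2$, the integrand is at least $(|a-b|/4)^{p-2}$. This uses $p-2\ge0$, so the case $p=2$ is trivial. It follows that $c_p=4^{2-p}/2$ works, and one can take $C=2\cdot 4^{p-2}/(p-1)$.

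Combining these steps gives $|a-b|^p\le C(a^{p-1}-b^{p-1})(a-b)$ pointwise, and integration concludes the proof. I expect no real obstacle here. The only point needing care is the lower bound on the integral, and that is the affine-measure argument above.
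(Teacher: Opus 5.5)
Your proof is correct and follows the same route as the paper: reduce to the pointwise inequality $|a-b|^p\le C(a^{p-1}-b^{p-1})(a-b)$ with $a=u(x)-u(y)$, $b=v(x)-v(y)$, then integrate against the kernel $|x-y|^{-N-ps}$. The only difference is that the paper quotes this elementary inequality from \cite[Lemma A.2]{BP}, whereas you prove it directly via the integral representation and the affine-measure lower bound, which also gives the explicit constant $C=2\cdot 4^{p-2}/(p-1)$.
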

\begin{proof}
We recall from \cite[Lemma A.2]{BP} the following inequality: for all $a,b,c,d\in\R$ with $a-b=c-d$ we have
\[|a-b|^p \le C(a-b)(c^{p-1}-d^{p-1}),\]
with $C>0$ depending on $p$. Choosing $a=u(x)-v(x)$, $b=u(y)-v(y)$, and accordingly $c=u(x)-u(y)$, $d=v(x)-v(y)$, and integrating we find
\begin{align*}
[u-v]_{s,p}^p &= \iint_{\R^N\times\R^N}\frac{|(u(x)-v(x))-(u(y)-v(y))|^p}{|x-y|^{N+ps}}\,dx\,dy \\
&\le C\iint_{\R^N\times\R^N}\frac{(u(x)-u(y))^{p-1}-(v(x)-v(y))^{p-1}}{|x-y|^{N+ps}}\left[(u(x)-v(x))-(u(y)-v(y))\right]\,dx\,dy \\
&= C\langle\fpl u-\fpl v,u-v\rangle,
\end{align*}
with $C>0$ depending on $p$ alone.
\end{proof}

\noindent
In the singular regime, a more involved inequality holds under appropriate geometrical conditions. In addition, we introduce a weighted monotonicity formula that will serve our endings:

\begin{lemma}\label{mos}
{\rm (Monotonicity, singular case)} Let $1<p<2$, $\Omega$ have inradius $R_\Omega<\infty$ and satisfy ${\rm EBC}(\rho)$. Then, there exists $C>0$ depending on $N,p,s$ s.t.\ setting
\[C_\Omega = C\max\Big\{1,\,\frac{R_\Omega^N}{\rho^N}\Big\},\]
for all $u,v\in\w$ and all bounded $\Omega'\subseteq\Omega$ the following inequalities hold:
\begin{enumroman}
\item\label{mos1} $\displaystyle\|u-v\|_{L^p(\Omega')}^p \le \left[C_\Omega\| \dd_\Omega\|_{L^\infty(\Omega')}^{ps}\langle\fpl u-\fpl v,u-v\rangle\right]^\frac{p}{2}\Big[\|u\|_{L^p(\Omega')}^p+\|v\|_{L^p(\Omega')}^p\Big]^\frac{2-p}{2};$
\item\label{mos2} $\displaystyle\Big\|\frac{u-v}{\ds}\Big\|_{L^p(\Omega')}^p \le \big[C_\Omega\langle\fpl u-\fpl v,u-v\rangle\big]^\frac{p}{2}\Big[\Big\|\frac{u}{\ds}\Big\|_{L^p(\Omega')}^p+\Big\|\frac{v}{\ds}\Big\|_{L^p(\Omega')}^p\Big]^\frac{2-p}{2}.$
\end{enumroman}
\end{lemma}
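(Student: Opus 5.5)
The plan follows the classical proof of the local inequality $|a-b|^p\lesssim |(a-b)(a^{p-1}-b^{p-1})|^{p/2}(|a|+|b|)^{p(2-p)/2}$ for $1<p<2$, transplanted to the nonlocal setting. Hardy's inequality (Theorem \ref{fhi}) will supply the $L^p$-type control from below.

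The elementary inequality we use is: for $1<p<2$ and all $c,d\in\R$,
\[|c-d|^2(|c|+|d|)^{p-2}\le C(c^{p-1}-d^{p-1})(c-d).\]
With $c=u(x)-u(y)$, $d=v(x)-v(y)$ and $w=u-v$, this gives pointwise
\[|w(x)-w(y)|^2\big(|u(x)-u(y)|+|v(x)-v(y)|\big)^{p-2}\le C\,\big[(u(x)-u(y))^{p-1}-(v(x)-v(y))^{p-1}\big](w(x)-w(y)).\]
Integrating against $|x-y|^{-N-ps}$ bounds the weighted quantity
\[I=\iint\frac{|w(x)-w(y)|^2(|u(x)-u(y)|+|v(x)-v(y)|)^{p-2}}{|x-y|^{N+ps}}\,dx\,dy\]
by $C\langle\fpl u-\fpl v,u-v\rangle$. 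To keep things elementary, we restrict the double integral to $\Omega'\times\Omega^c$, where $u(y)=v(y)=w(y)=0$. The integrand there becomes
\[\frac{|w(x)|^2(|u(x)|+|v(x)|)^{p-2}}{|x-y|^{N+ps}},\]
and integrating in $y\in\Omega^c$ with the lower bound of Lemma \ref{sin} (with $\alpha=ps$) yields
\[\int_{\Omega'}\frac{|w|^2(|u|+|v|)^{p-2}}{\ds^{ps}}\,dx\le C_\Omega\langle\fpl u-\fpl v,u-v\rangle.\]
This is the key weighted estimate; both \ref{mos1} and \ref{mos2} follow from it by Hölder's inequality.

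For \ref{mos1}, write
\[|w|^p=\big[|w|^2(|u|+|v|)^{p-2}\ds^{-ps}\big]^{p/2}\,\big[\ds^{ps}\big]^{p/2}\,(|u|+|v|)^{(2-p)p/2}.\]
Hölder with exponents $2/p$ and $2/(2-p)$ gives
\[\|w\|_{L^p(\Omega')}^p\le\Big[\| \dd_\Omega\|_{L^\infty(\Omega')}^{ps}\,C_\Omega\langle\cdot\rangle\Big]^{p/2}\Big[\int_{\Omega'}(|u|+|v|)^p\Big]^{(2-p)/2}.\]
Since $(|u|+|v|)^p\le 2^{p-1}(|u|^p+|v|^p)$, the constant $2^{p-1}$ is absorbed into $C$. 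On the set where $|u|+|v|=0$ we also have $w=0$, so the pointwise identity is harmless there.

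For \ref{mos2}, divide by $\ds^p$ and split
\[\frac{|w|^p}{\ds^p}=\big[|w|^2(|u|+|v|)^{p-2}\ds^{-ps}\big]^{p/2}\Big(\frac{|u|+|v|}{\ds}\Big)^{(2-p)p/2}\ds^{\,p^2s/2+(2-p)p/2-p}.\]
The leftover power of $\ds$ has exponent $\tfrac p2(ps+2-p-2)=\tfrac{p^2}{2}(s-1)<0$. This does not cancel, so the naive splitting fails, and this is the main obstacle.

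The remedy is to use the Hardy-type step at the level of the weight $\ds^{-ps}$ paired with the right power. From the key estimate, the quantity $\int |w|^2(|u|+|v|)^{p-2}\ds^{-ps}$ is controlled. Hölder then gives
\[\int\frac{|w|^p}{\ds^{ps}}\le\Big[\int\frac{|w|^2(|u|+|v|)^{p-2}}{\ds^{ps}}\Big]^{p/2}\Big[\int\frac{(|u|+|v|)^p}{\ds^{ps}}\Big]^{(2-p)/2}.\]
This is the natural ``$\ds^{ps}$'' version of \ref{mos2}, and the stated form with $\ds^{p}$ should follow in the same way once the correct Hardy weight is used. So I would first check whether the intended weight is $\ds^{s}$ in the statement, i.e.\ $\ds^{ps}$ after raising to the power $p$, consistent with Theorem \ref{fhi} and with the later use on $u/\ds$. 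If it is, the proof closes exactly as in \ref{mos1} with the $L^\infty$ factor of $\ds$ removed. This identification of the weight is where I expect the only real difficulty to lie.
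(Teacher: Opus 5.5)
Your argument is correct and is essentially the paper's proof. It uses the same $1<p<2$ pointwise inequality (on $\Omega'\times\Omega^c$ it is exactly the inequality at $a=u(x)$, $b=v(x)$ that the paper uses). It then applies the lower bound of Lemma \ref{sin} with $\alpha=ps$ to dominate $\int_{\Omega'}(\cdot)\,\dd_\Omega^{-ps}\,dx$ by the $\Omega'\times\Omega^c$ part of $\langle\fpl u-\fpl v,u-v\rangle$, and finishes with H\"older using exponents $2/p$, $2/(2-p)$.

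The difficulty you flag in (ii) comes from misreading the notation. In the paper $\ds$ denotes $\dd_\Omega^{s}$, not $\dd_\Omega$, so $\|(u-v)/\ds\|_{L^p(\Omega')}^p=\int_{\Omega'}|u-v|^p\,\dd_\Omega^{-ps}\,dx$. Your ``remedy'' (H\"older with respect to $dx/\dd_\Omega^{ps}$) is therefore exactly how the paper proves (ii). For the same reason, the factors you wrote as $\ds^{ps}$ should read $\dd_\Omega^{ps}$.
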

\begin{proof}
We recall from \cite[Lemma A.2]{BP} the following inequality: when $p\in (1, 2)$, for all $a,b\in\R$
\[(a-b)^2 \le C(a^{p-1}-b^{p-1})(a-b)(a^2+b^2)^\frac{2-p}{p},\]
with $C>0$ depending on $p$. We apply such inequality with $a=u(x)$, $b=v(x)$, raise both sides to the power $p/2$, and integrate in $\Omega'$. Then we apply H\"older's inequality introducing the factor $ \dd_\Omega^{ps}$:
\begin{align}\label{mos3}
&\int_{\Omega'}|u-v|^p\,dx \le C\int_{\Omega'}(u^{p-1}-v^{p-1})^\frac{p}{2}(u-v)^\frac{p}{2}(|u|^p+|v|^p)^\frac{2-p}{2}\,dx \\
\nonumber &\le C\| \dd_\Omega\|_{L^\infty(\Omega')}^\frac{p^2s}{2}\left[\int_{\Omega'}\frac{(u^{p-1}-v^{p-1})(u-v)}{ \dd_\Omega^{ps}}\,dx\right]^\frac{p}{2}\left[\|u\|_{L^p(\Omega')}^p+\|v\|_{L^p(\Omega')}^p\right]^\frac{2-p}{2}.
\end{align}
By Lemma \ref{sin} (with $\alpha=ps$), for all $x\in\Omega'\subseteq \Omega$ we have
\[\frac{1}{ \dd_\Omega^{ps}(x)} \le C_\Omega\int_{\Omega^c}\frac{dy}{|x-y|^{N+ps}},\]
with $C>0$ depending on $N,p,s,$ and $\Omega$. Hence we can estimate the first integral above (which has a non-negative integrand)  as follows:
\begin{align*}
\int_{\Omega'}\frac{(u^{p-1}(x)-v^{p-1}(x))(u(x)-v(x))}{ \dd_\Omega^{ps}(x)}\,dx &\le C_\Omega\iint_{\Omega'\times\Omega^c}\frac{(u^{p-1}(x)-v^{p-1}(x))(u(x)-v(x))}{|x-y|^{N+ps}}\,dx\,dy \\
&\le C_\Omega\langle\fpl u-\fpl v,u-v\rangle
\end{align*}
where in the last inequality we used the fact that $u(y)=v(y)=0$ for $y\in \Omega^c$.
Plugging such inequality into \eqref{mos3}, we get \ref{mos1}. \\
Inequality \ref{mos2} is in fact easier to prove. Without loss of generality, we assume that $u/\ds,v/\ds\in L^p(\Omega')$. Starting from the pointwise inequality, we have as above
\[\int_{\Omega'}\frac{|u-v|^p}{ \dd_\Omega^{ps}}\, dx \le C\left[\int_{\Omega'}\frac{(u^{p-1}-v^{p-1})(u-v)}{ \dd_\Omega^{ps}} \, dx\right]^\frac{p}{2}\left[\Big\|\frac{u}{\ds}\Big\|_{L^p(\Omega')}^p+\Big\|\frac{u}{\ds}\Big\|_{L^p(\Omega')}^p\right]^\frac{2-p}{2},\]
where H\"older's inequality is applied this time with the measure $dx/ \dd_\Omega^{ps}(x)$. The rest of the argument runs as in the previous case, leading to \ref{mos2}.
\end{proof}

\section{Harmonic extensions} \label{sec3}

\noindent
Our approach is partly based on a classical perturbative method, as in previous studies on fractional regularity theory (see \cite{BLS,GL}), hence we need some fine estimates on solutions of homogeneous equations in special domains. We say that a function $v\in\widetilde{W}^{s,p}(\Omega)$ is $(s, p)$-{\em harmonic} in $\Omega$ if it satisfies
\[\fpl v = 0 \qquad \text{in $\Omega$,}\]
in the sense of equation \eqref{fee}. 
Given a function $u\in\w$, $x_0\in\overline\Omega$, and $R>0$, we define the $(s, p)$-{\em harmonic extension} of $u$ in $D_R(x_0)\subseteq\Omega$ as the unique solution $v\in\w$ of the problem
\beq\label{hex}
\begin{cases}
\fpl v = 0 & \text{in $D_R(x_0)$} \\
v = u & \text{in $D_R^c(x_0)$.}
\end{cases}
\eeq
Note that both $u,v$ vanish in $\Omega^c$.
In all the following results, $\Omega$ is assumed to be bounded and with a $C^{1,1}$-smooth boundary.

\noindent
A basic tool for our purposes will be the following barrier, constructed in  \cite[Lemma 4.3]{IMS}.

\begin{proposition}
\label{bar}
There exist $0<\eps<1$, $C>1$ depending on $N,p,s$ with the following property. Given $e\in\partial B_1(0)$, there exists a function $w\in C^s(\R^N)\cap \widetilde{W}^{s, p}(B_\eps(e)\setminus\overline{B}_1(0))$ s.t.\
\[\begin{cases}
\displaystyle\fpl w \ge \frac{1}{C} & \text{in $B_\eps(e)\setminus\overline{B}_1(0)$} \\
\displaystyle\frac{\dd_1^s}{C} \le w \le C\dd_1^s & \text{in $\R^N$,}
\end{cases}\]
where $\dd_1={\rm dist}(\cdot,B_1(0))$.
\end{proposition}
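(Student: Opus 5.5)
This barrier is essentially the one of \cite[Lemma 4.3]{IMS}, and I would build it from the explicit profile $\dd_1^s$, which is $(s,p)$-harmonic in the complement of the ball up to a lower order error. First, I would work in the one-dimensional picture. The function $x\mapsto (x_+)^s$ is $(s,p)$-harmonic on $(0,\infty)$; this is the nonlinear analogue of Example \ref{tor}, and it follows by scaling and a reflection argument. By rotation invariance we may take $e=e_1$. The candidate is then
\[w_0(x)=\dd_1^s(x)=(|x|-1)_+^s .\]
Near $e$, outside $\overline B_1(0)$, the function $w_0$ is a perturbation of the half-space profile $((x-e)\cdot e)_+^s$. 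The level sets of $w_0$ are spheres, which curve toward the complement of the region where $w_0>0$, that is, toward the ball. Computing $\fpl w_0$ pointwise at $x$ with $\dd_1(x)=t$ small, I would compare $w_0$ with the flat profile tangent at the projection point. The difference has the good sign ($w_0$ is smaller than the flat profile in the half space $\{(y-e)\cdot e>0\}$), which contributes a nonnegative amount. The key quantitative issue is that this curvature gain is only of size $O(t^{s-ps+\dots})$ and may not dominate a constant. So I would not rely on $w_0$ alone.

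Second, I would add a correction that produces a strictly positive right-hand side while preserving two-sided comparability with $\dd_1^s$. The standard trick is
\[w=w_0+\phi\quad\text{or}\quad w=w_0-\lambda\,\psi,\]
where $\psi$ is a nonnegative bump supported far away, say $\psi=\chi_{B_{1/2}(3e)}$, or more precisely a fixed function supported in $\{\dd_1\ge 1\}$. Lemma \ref{spp} gives, for $x\in B_\eps(e)\setminus\overline B_1(0)$,
\[\fpl(w_0-\lambda\psi)(x)=\fpl w_0(x)+2\int\frac{(w_0(x)-w_0(y)+\lambda\psi(y))^{p-1}-(w_0(x)-w_0(y))^{p-1}}{|x-y|^{N+ps}}dy .\]
For $x$ near $e$ the integrand is supported where $|x-y|\approx 2$ and $w_0(x)-w_0(y)\le -c<0$. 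Hence the added term is bounded below by a constant $c(\lambda)>0$. If instead I subtract, using a decrease on a far set, the sign flips; I want to \emph{lower} $w$ far away, which increases $\fpl w$ near $e$. To keep $w\ge \dd_1^s/C$, I would lower $w$ only to $w_0/2$ on the support, so that the comparability with $\dd_1^s$ survives globally. That choice fixes $\lambda$ in terms of $N,p,s$.

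Third, I would choose $\eps$ small so that the possibly negative error in $\fpl w_0$ near $e$ is smaller than the fixed gain $c(\lambda)/2$. This is the main obstacle: proving $\fpl w_0\ge -o(1)$ uniformly as $\dd_1(x)\to0$, $x\in B_\eps(e)$. I would try to show that $\fpl w_0(x)$ is actually bounded below by $-C$ times a positive power of $\dd_1(x)$, or even that it is nonnegative by the curvature comparison above. I would do this by splitting the integral into $B_{t/2}(x)$, where $w_0$ is smooth and Taylor expansion against the flat profile gives an error $O(t^{s(p-1)-ps+1})\cdot$(small), and the remainder, where direct estimates of $|w_0-\text{flat}|\lesssim |y-e|^{1+s}$ apply. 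If the error is not small, I would shrink the radius by a scaling argument: the exponent bookkeeping makes the correction term scale like $t^{-ps}$ relative to the others, which should give the needed smallness. The regularity claims are immediate from the construction: $w\in C^s$, and $w$ lies in the tail-weighted space because it is bounded and locally $W^{s,p}$ away from the sphere.
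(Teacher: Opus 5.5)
The paper does not prove this proposition; it imports the barrier from \cite[Lemma 4.3]{IMS}. Judged on its own, your plan has a genuine gap at its central step, caused by a sign error.

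\textbf{The sign error.} Since $|y|\ge y\cdot e$, we have $\dd_1(y)\ge((y-e)\cdot e)_+$ for every $y\in\R^N$. So $w_0=\dd_1^s$ lies \emph{above} the tangent half-space profile $\phi(y)=((y-e)\cdot e)_+^s$, not below it, with equality at $x_t=(1+t)e$. By monotonicity of $a\mapsto a^{p-1}$ this gives $\fpl w_0(x_t)\le\fpl\phi(x_t)=0$. Thus in the exterior of a ball, $\dd_1^s$ is a \emph{sub}solution near the sphere.

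\textbf{The defect does not vanish.} Write
\[
\fpl w_0(x_t)=-2\int_{\R^N}\frac{(t^s-\phi(y))^{p-1}-(t^s-w_0(y))^{p-1}}{|x_t-y|^{N+ps}}\,dy ,
\]
where the integrand is nonnegative. Fatou's lemma then gives
\[
\limsup_{t\to0^+}\fpl w_0(x_t)\le-2\int_{\R^N}\frac{w_0^{p-1}(y)-\phi^{p-1}(y)}{|e-y|^{N+ps}}\,dy=:-c_0<0,
\]
and this holds uniformly near $\partial B_1$ because $\fpl w_0$ is radial. The defect is not a curvature effect but a nonlocal one: it comes from the large set $\{(y-e)\cdot e<0\}\setminus\overline B_1$, where $\phi=0<w_0$. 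It is already present for $N=1$; for $p=2$, $s=1/2$ one finds $c_0=\sqrt2\pi/2$. Hence the bound $\fpl w_0\ge-o(1)$ you aim for in the third step is false, and no Taylor expansion or shrinking of $\eps$ can produce it.

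\textbf{Why your second step does not close.} A fixed far-away modification that lowers $w$ to $w_0/2$ yields a fixed gain $c(\lambda)$, with no reason to exceed $c_0$. For $N=1$, $p=2$, $s=1/2$ and your bump $B_{1/2}(3e)$, the gain at $e$ is $2(1.5^{-1/2}-2.5^{-1/2})\approx0.37$, about six times smaller than $c_0$. Rescaling the whole picture does not help either, since gain and defect scale by the same power.

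\textbf{The missing idea: break scale invariance.}
\begin{enumerate}
\item Take from \cite{IMS} the fact that $\fpl\dd_1^s$ is bounded, \emph{in the weak sense}, by some $C_1$ in a neighbourhood of $\partial B_1$ outside $B_1$. Your pointwise Taylor argument does not give this.
\item Lower $w$ smoothly to $\dd_1^s/2$ on a ball of radius $\sim r_0$ centred at $(1+r_0)e$.
\item By Lemma \ref{spp}, this adds at least $c\,r_0^{s(p-1)}r_0^{N}r_0^{-N-ps}=c\,r_0^{-s}$ to $\fpl w$ in $B_\eps(e)\setminus\overline B_1$, once $\eps\le c(s)\,r_0$.
\item For $r_0$ small this beats $C_1$, while $\dd_1^s/2\le w\le\dd_1^s$ is preserved.
\end{enumerate}

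\textbf{Minor points.}
\begin{enumerate}
\item The characteristic function $\chi_{B_{1/2}(3e)}$ would destroy $w\in C^s(\R^N)$, so you need a smooth cutoff.
\item $w$ is not bounded; you only have $w\lesssim 1+|x|^s$. This still gives a finite tail.
\item Membership in $\widetilde W^{s,p}(B_\eps(e)\setminus\overline B_1)$ requires $\dd_1^s\in W^{s,p}$ on a neighbourhood that crosses $\partial B_1$. Regularity away from the sphere is not enough.
\end{enumerate}
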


\subsection{Oscillation estimates}\label{ss31}

From now on, we focus on the setting where $v$ is the $(s, p)$-harmonic extension of $u$ in suitable sets, deriving estimates for $v$ in terms of $u$.  We first consider $L^\infty$ bounds:
 
\begin{lemma}\label{abh}
Let $u,v\in\w$ with $v$ being the $(s,p)$-harmonic extension of $u$ in $D_R(x_0)\subseteq\Omega$, $0\le\alpha<\min\{1, p's\}$, and $H_\alpha(u)$ be defined by \eqref{hal}. Then, there exists $C>0$ depending on the data and $\alpha$, s.t.\
\[\|v\|_{L^\infty(D_R(x_0))} \le C\|u\|_{L^\infty(D_{2R}(x_0))}+CH_\alpha(u)R^\alpha.\]
\end{lemma}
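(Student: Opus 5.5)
We will prove the bound by comparison, using Lemma \ref{wcp} on the set $D_R(x_0)$ and two explicit competitors built from constants and Lemma \ref{spp}. Set $M=\|u\|_{L^\infty(D_{2R}(x_0))}$. Since $v=u$ outside $D_R(x_0)$ and $u=0$ in $\Omega^c$, we have $|v|\le M$ on $B_{2R}(x_0)\setminus D_R(x_0)$. Outside $B_{2R}(x_0)$ we do not control $u$ by $M$, but we control its growth through $H_\alpha(u)$. If $\alpha=0$, then $|u|\le H_0(u)$ everywhere. If $\alpha>0$, then extending by zero and using the nearest boundary point, $|u(y)|\le H_\alpha(u)\,\dd_\Omega^\alpha(y)$. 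Alternatively, $|u(y)|\le M+H_\alpha(u)|y-x_0|^\alpha$, obtained by comparing with a point of $D_R(x_0)$ (or with a point of $\Omega^c$ if that is closer). In all cases
\[|u(y)|\le M+CH_\alpha(u)|y-x_0|^\alpha \qquad \text{for } y\in B_{2R}^c(x_0).\]

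For the upper bound, take the competitor $w=M\chi_{B_{2R}(x_0)}+u\chi_{B_{2R}^c(x_0)}$, or more simply $w=M$ in $B_{2R}(x_0)$ and $w=u$ outside. Then $w\ge v$ off $D_R(x_0)$. By Lemma \ref{spp}, applied with the constant $M$ (for which $\fpl M=0$) and modified outside $B_{2R}(x_0)\supset\overline{D_R(x_0)}$, we get for $x\in D_R(x_0)$
\[\fpl w(x)=2\int_{B_{2R}^c(x_0)}\frac{(M-u(y))^{p-1}-0}{|x-y|^{N+ps}}\,dy\ \ge\ -2\int_{B_{2R}^c(x_0)}\frac{(u(y)-M)_+^{p-1}}{|x-y|^{N+ps}}\,dy .\]
For $|y-x_0|\ge 2R$ and $|x-x_0|<R$ we have $|x-y|\ge |y-x_0|/2$. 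Since $(p-1)\alpha<ps$, the negative part is bounded below by
\[-C H_\alpha(u)^{p-1}R^{(p-1)\alpha-ps}.\]
This error must be absorbed. To do so, add a bump: let $\phi$ be a fixed smooth function with $0\le\phi\le1$ and $\fpl$-type lower bound, or better, use the torsion-like function $\psi$ solving $\fpl\psi=1$ in $B_{R}(x_0)$ with $\psi=0$ outside. By scaling, $0\le\psi\le cR^{ps/(p-1)}$. Set
\[w=M+\lambda\psi \ \text{ on } B_{2R}(x_0), \qquad w=u \ \text{ outside}.\]
Adding $\lambda\psi$ is nonlinear, so we cannot superpose. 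Instead we apply Lemma \ref{spp} to the base function $M+\lambda\psi$, whose $\fpl$ equals $\lambda^{p-1}$ in $B_R(x_0)\supseteq D_R(x_0)$. The correction from changing the values outside $B_{2R}(x_0)$ is bounded as above, because $M+\lambda\psi(x)\ge M$. This gives
\[\fpl w\ge \lambda^{p-1}-CH_\alpha(u)^{p-1}R^{(p-1)\alpha-ps}\ \ge 0\]
once we choose $\lambda=CH_\alpha(u)R^{\alpha-ps/(p-1)}$. Lemma \ref{wcp} then yields
\[v\le w\le M+\lambda cR^{ps/(p-1)}=M+CH_\alpha(u)R^\alpha\]
in $D_R(x_0)$. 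The lower bound follows symmetrically by applying the same argument to $-u$ and $-v$.

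The main obstacle is the tail term, specifically verifying the monotone sign of $(a-b)^{p-1}-(a-c)^{p-1}$ when $c\le M\le a$ and bounding it by $(c-a)_+^{p-1}$ only in the region where $u$ exceeds $M$. Elsewhere the correction is nonnegative. In the singular case $p<2$, the elementary inequality $(a-b)^{p-1}-(a-c)^{p-1}\ge -C(c-a)_+^{p-1}$ is immediate, since the first term is nonnegative when $b\le a$. So the required bound does hold.
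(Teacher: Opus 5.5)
Your proof takes a genuinely different route from the paper's, and the route is sound. However, the key tail estimate is wrong as written when $p>2$, and its justification for $p<2$ does not hold up.

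\textbf{Comparison with the paper.} The paper leaves the barrier untouched and modifies the solution. It sets $\tilde v=\min\{u,M_R\}$ outside $D_R(x_0)$ and uses Lemma \ref{spp} to get $\fpl\tilde v=h$ with $h\ge0$ bounded. It then compares $\tilde v$ with $M_R+\|h\|_{L^\infty}^{1/(p-1)}w$, where $w$ is the torsion function of $B_{2R}(x_0)$. Because $h$ depends on $v(x)$, for $p\ge2$ a term $\|v\|_{L^\infty(D_R(x_0))}^{p-2}$ appears and must be absorbed via Young's inequality; the case $\alpha=0$ is handled separately by comparison with a constant. You instead keep $v$ and graft $u$ onto the barrier $M+\lambda\psi$ outside $B_{2R}(x_0)$. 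The nonlocal correction then depends only on the known value $M+\lambda\psi(x)$. This needs no a priori bound on $v$ and no absorption step, and it treats $\alpha=0$ and $\alpha>0$ together.

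\textbf{The step that fails.} You claim the correction for the base $M+\lambda\psi$ is ``bounded as above, because $M+\lambda\psi(x)\ge M$''. Set $t=\lambda\psi(x)\ge0$ and $d=u(y)-M>0$; the integrand is $(t-d)^{p-1}-t^{p-1}$. For $p>2$ and $t\gg d$ this is about $-(p-1)t^{p-2}d$, which is not $O(d^{p-1})$. The correct lower bound is $-C(d^{p-1}+t^{p-2}d)$. So the error depends on $\lambda$, and your choice of $\lambda$ has to be rechecked.

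\textbf{How to repair it.} The argument does survive. You have $t\le c\lambda R^{p's}$. Since $\alpha<p's\le ps$ for $p\ge2$, also $\int_{B_{2R}^c(x_0)}(u(y)-M)_+|x-y|^{-N-ps}\,dy\le CH_\alpha(u)R^{\alpha-ps}$. Take $\lambda=KH_\alpha(u)R^{\alpha-p's}$. Then the extra term is $CK^{p-2}H_\alpha^{p-1}(u)R^{(p-1)\alpha-ps}$. This is dominated by $\lambda^{p-1}=K^{p-1}H_\alpha^{p-1}(u)R^{(p-1)\alpha-ps}$ once $K$ is large. This term is the exact counterpart of the paper's $\|v\|^{p-2}$ term.

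\textbf{The case $p<2$.} Your claimed bound is true here, but the justification in your last paragraph does not work. Nonnegativity of the first term is useless, because the subtracted term $(\lambda\psi(x))^{p-1}$ is nonnegative too. Also, your displayed inequality has right-hand side $-C(c-a)_+^{p-1}$, which vanishes when $c\le a$, so it cannot be what is needed. What you need is $|\tau^{p-1}-\sigma^{p-1}|\le 2^{2-p}|\tau-\sigma|^{p-1}$, applied only on $\{u>M\}$; elsewhere the integrand is nonnegative by monotonicity. This gives the integrand $\ge-2^{2-p}(u(y)-M)^{p-1}$, and the rest of your argument then goes through.
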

\begin{proof}
First we consider the case $\alpha=0$ and assume, without loss of generality, that $u\in L^\infty(\Omega)$. Then, recalling \eqref{hex}, we may apply Lemma \ref{wcp} to $v$ and the constant $\|u\|_{L^\infty(\Omega)}$ (which is obviously harmonic in $D_R(x_0)$) to find that in $D_R(x_0)$
\[v \le \|u\|_{L^\infty(\Omega)} \le \|u\|_{L^\infty(D_{2R}(x_0))}+H_0(u).\]
Now we turn to the case $\alpha>0$ and assume $u\in C^\alpha(\overline\Omega)$. In particular, then, we may set
\[M_R = \|u\|_{L^\infty(D_{2R}(x_0))} < \infty.\]
Set for all $x\in\R^N$
\[\tilde v(x) = \begin{cases}
v(x) & \text{if $x\in D_R(x_0)$} \\
\min\{u(x),\,M_R\} & \text{if $x\in D_R^c(x_0)$.}
\end{cases}\]
We have $\tilde v=u$ in $D_{2R}(x_0)$, hence $\tilde v-v$ is supported away from $D_R(x_0)$. By Lemma \ref{spp}, we have for all $x\in D_R(x_0)$
\begin{align*}
\fpl\tilde v(x) &= \fpl v(x)+2\int_{D_R^c(x_0)\cap\{u>M_R\}}\frac{(v(x)-\tilde v(y))^{p-1}-(v(x)-v(y))^{p-1}}{|x-y|^{N+ps}}\,dy \\
&= 2\int_{D_R^c(x_0)\cap\{u>M_R\}}\frac{(u(y)-v(x))^{p-1}-(M_R-v(x))^{p-1}}{|x-y|^{N+ps}}\,dy =: h(x).
\end{align*}
Since $D_R^c(x_0)\cap\{u>M_R\}\subseteq D_{2R}^c(x_0)$ and the mapping $t\mapsto t^{p-1}$ is increasing in $\R$, we have $h\ge 0$ in $D_R(x_0)$. Besides, for all $x\in D_R(x_0)$, $y\in D_{2R}^c(x_0)$ we have
\[|x-y| \ge |x_0-y|-|x-x_0| \ge \frac{|x_0-y|}{2}.\]
Therefore,
\beq\label{abh1}
0 \le h(x) \le C\int_{D_{2R}^c(x_0)}\frac{(u(y)-v(x))^{p-1}-(M_R-v(x))^{p-1}}{|x_0-y|^{N+ps}}\,dy.
\eeq
The estimate above is improved in different ways in the degenerate and singular cases, respectively:
\begin{itemize}[leftmargin=1cm]
\item[$(a)$] If $p\ge 2$, then we use in \eqref{abh1} the following point-wise inequality holding for all $a\ge b$, $c\in\R$ (which follows at once from \cite[formula (2.15)]{IMS1}):
\[(a-c)^{p-1}-(b-c)^{p-1} \le C(a-b)[(a-b)^{p-2}+|b|^{p-2}+|c|^{p-2}].\]
Hence, we have for all $x\in D_R(x_0)$
\begin{align*}
h(x) &\le C\int_{D_{2R}^c(x_0)}\frac{|u(y)-M_R|^{p-1}}{|x_0-y|^{N+ps}}\,dy+C\big(M_R^{p-2} \\
&+ \|v\|_{L^\infty(D_R(x_0))}^{p-2}\big)\int_{D_{2R}^c(x_0)}\frac{|u(y)-M_R|}{|x_0-y|^{N+ps}}\,dy.
\end{align*}
Since $u\in C^\alpha(\overline\Omega)$, we can find $x_1\in\overline{D}_{2R}(x_0)$ s.t.\ $|u(x_1)|=M_R$. So, for all $y\in D_{2R}^c(x_0)$ we have
\[|y-x_1| \le |y-x_0|+|x_0-x_1| \le 2|x_0-y|,\]
which in turn implies
\[
|u(y)-M_R| = |u(y)-u(x_1)| \le [u]_{C^\alpha(\overline\Omega)}|y-x_1|^\alpha \le 2^\alpha[u]_{C^\alpha(\overline\Omega)}|x_0-y|^\alpha.
\]
Recalling that $\alpha(p-1)<ps$, we can estimate the first integral above as follows:
\begin{align*}
\int_{D_{2R}^c(x_0)}\frac{|u(y)-M_R|^{p-1}}{|x_0-y|^{N+ps}}\,dy &\le C[u]_{C^\alpha(\overline\Omega)}^{p-1}\int_{D_{2R}^c(x_0)}\frac{dy}{|x_0-y|^{N+ps-(p-1)\alpha}} \le \frac{C[u]_{C^\alpha(\overline\Omega)}^{p-1}}{R^{ps-(p-1)\alpha}}.
\end{align*}
Similarly, noting that also $\alpha<ps$, we have for the second integral
\[\int_{D_{2R}^c(x_0)}\frac{|u(y)-M_R|}{|x_0-y|^{N+ps}}\,dy \le \frac{C[u]_{C^\alpha(\overline\Omega)}}{R^{ps-\alpha}}.\]
Plugging these inequalities into the previous one, we find $C>0$ depending on the data and $\alpha$, s.t.\ for all $x\in D_R(x_0)$
\beq\label{abh2}
h(x) \le \frac{C[u]_{C^\alpha(\overline\Omega)}^{p-1}}{R^{ps-(p-1)\alpha}}+\frac{C[u]_{C^\alpha(\overline\Omega)}}{R^{ps-\alpha}}\big(M_R^{p-2}+\|v\|_{L^\infty(D_R(x_0))}^{p-2}\big).
\eeq
\item[$(b)$] If $p<2$, then we use the following simpler point-wise inequality (see \cite[formula (A.5)]{IM}), holding for all $a\ge b$, $c\in\R$:
\[(a-c)^{p-1}-(b-c)^{p-1} \le (a-b)^{p-1}.\]
Starting from \eqref{abh1} and arguing as above, we find $C>0$ depending on the data and $\alpha$ s.t.\ for all $x\in D_R(x_0)$
\beq\label{abh3}
h(x) \le C\int_{D_R^c(x_0)\cap\{u>M_R\}}\frac{(u(y)-M_R)^{p-1}}{|x_0-y|^{N+ps}}\,dy \le \frac{C[u]_{C^\alpha(\overline\Omega)}^{p-1}}{R^{ps-(p-1)\alpha}},
\eeq
(note that we only used $(p-1)\alpha<ps$).
\end{itemize}
Now define $w\in W^{s,p}_0(B_{2R}(x_0))$ as the unique solution of the torsion problem
\[\begin{cases}
\fpl w = 1 & \text{in $B_{2R}(x_0)$} \\
w = 0 & \text{in $B_{2R}^c(x_0)$.}
\end{cases}\]
By \cite[Lemma 2.2]{IMS1}, there is $C>0$ depending on $N,p,s$ s.t.\ in $\R^N$ there holds $0 \le w \le CR^{p's}$. Set for all $x\in\R^N$
\[\tilde w(x) = M_R+\|h\|_{L^\infty(D_R(x_0))}^\frac{1}{p-1}w(x).\]
Then $\tilde w$ is an upper barrier for $\tilde v$ in the following sense:
\[\begin{cases}
\fpl\tilde v \le \|h\|_{L^\infty(D_R(x_0))} = \fpl\tilde w & \text{in $D_R(x_0)$} \\
\tilde v \le M_R \le \tilde w & \text{in $D_R^c(x_0)$.}
\end{cases}\]
By Lemma \ref{wcp} we have $\tilde v\le\tilde w$ in all of $R^N$. In particular, for all $x\in D_R(x_0)$
\begin{align*}
v(x) &\le M_R+\|h\|_{L^\infty(D_R(x_0))}^\frac{1}{p-1}\|w\|_{L^\infty(B_{2R}(x_0))} \\
&\le M_R+C\|h\|_{L^\infty(D_R(x_0))}^\frac{1}{p-1}R^{p's}.
\end{align*}
Again we must distinguish two cases. If $p\ge 2$, we start from the previous estimate of $v$, apply \eqref{abh2}, and conclude via Young's inequality:
\begin{align*}
\sup_{D_R(x_0)}\,v &\le M_R+C\Big[\frac{[u]_{C^\alpha(\overline\Omega)}^{p-1}}{R^{ps-(p-1)\alpha}}+\frac{[u]_{C^\alpha(\overline\Omega)}}{R^{ps-\alpha}}\big(M_R^{p-2}+\|v\|_{L^\infty(D_R(x_0))}^{p-2}\big)\Big]^\frac{1}{p-1}R^{p's} \\
&\le M_R+C[u]_{C^\alpha(\overline\Omega)}R^\alpha+C[u]_{C^\alpha(\overline\Omega)}^\frac{1}{p-1}\big(M_R^\frac{p-2}{p-1}+\|v\|_{L^\infty(D_R(x_0))}^\frac{p-2}{p-1}\big)R^\frac{\alpha}{p-1} \\
&\le (1+\eps)M_R+\eps\|v\|_{L^\infty(D_R(x_0))}+C_\eps[u]_{C^\alpha(\overline\Omega)}R^\alpha,
\end{align*}
with $C_\eps\to\infty$ as $\eps\to 0$. Using the same argument on $-v$, we easily obtain
\[\|v\|_{L^\infty(D_R(x_0))} \le (1+\eps)M_R+\eps\|v\|_{L^\infty(D_R(x_0))}+C_\eps[u]_{C^\alpha(\overline\Omega)}R^\alpha.\]
Choosing $\eps>0$ small enough, we can reabsorb the $L^\infty$-norm of $v$ on the right hand side into the left hand side. Recalling the definition of $M_R$, we have
\[\|v\|_{L^\infty(D_R(x_0))} \le C\|u\|_{L^\infty(D_{2R}(x_0))}+CH_\alpha(u)R^\alpha.\]
If $p<2$, an easier argument exploiting \eqref{abh3} leads to the same conclusion.
\end{proof}

\noindent
The next lemma yields an oscillation estimate on $v$ near the boundary of $\Omega$:

\begin{lemma}\label{hoe}
Let $\bar x\in\partial\Omega$, $0<R<{\rm diam}(\Omega)$, $u,v\in\w$ with being the $(s, p)$-harmonic extension of $u$ in $D_R(\bar x)$. Also, let $0\le\alpha<\min\{1,p's\}$ and $H_\alpha(u)$ be defined by \eqref{hal}. Then, there exists $C>0$ depending on the data (but not on $\alpha$), s.t.\ for all $0<r\le R$
\[\underset{D_r(\bar x)}{\rm osc}\,v \le CH_\alpha(u) R^\alpha\Big(\frac{r}{R}\Big)^s.\]
\end{lemma}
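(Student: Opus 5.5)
Since $v$ and $u$ both vanish in $\Omega^c$, the key point is that $v$ is $(s,p)$-harmonic in $D_R(\bar x)$ and vanishes outside $\Omega$ near $\bar x$. This is a one-sided Dirichlet problem, so we expect $|v|\lesssim \ds^s$ near $\bar x$. Oscillation on $D_r(\bar x)$ is then controlled by $\sup_{D_r(\bar x)}|v|$, because $v=0$ at boundary points. We therefore aim for the pointwise bound
\[
|v(x)| \le C H_\alpha(u)R^\alpha \Big(\frac{|x-\bar x|}{R}\Big)^s \qquad \text{for } x\in D_{R/2}(\bar x).
\]
For $r\ge cR$, Lemma \ref{abh} already gives $\operatorname{osc}_{D_r(\bar x)} v\le 2\|v\|_{L^\infty(D_R(\bar x))}\le C(\|u\|_{L^\infty(D_{2R}(\bar x))}+H_\alpha(u)R^\alpha)$. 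Moreover $\|u\|_{L^\infty(D_{2R}(\bar x))}\le 2^\alpha H_\alpha(u)R^\alpha$ when $\alpha>0$, since $u(\bar x)=0$, while for $\alpha=0$ it is just $H_0(u)$. Set $M:=CH_\alpha(u)R^\alpha$, so that $|v|\le M$ in $D_R(\bar x)$.

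To handle $r<cR$, we use the barrier of Proposition \ref{bar}, rescaled and translated. Fix a point $y\in\partial\Omega\cap B_{R/4}(\bar x)$ and let $B_\rho(z)$, with $\rho=\min\{\bar\rho,\eps' R\}$, be the exterior tangent ball at $y$ given by Lemma \ref{geo}. We rescale the barrier $w$ so that $B_1$ becomes $B_\rho(z)$, and set $w_\rho(x)=\rho^s w((x-z)/\rho)$. This gives $\fpl w_\rho\ge \rho^{-s(p-1)}\cdot\rho^{s(p-1)-ps}/C$, i.e.\ $\fpl w_\rho\ge c\rho^{-ps}\cdot\rho^{s(p-1)}$ up to scaling; in any case it is positive in the annulus $A=B_{\eps\rho}(y')\setminus \overline B_\rho(z)$ with $y'=z+\rho e$. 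It also satisfies $w_\rho\simeq \rho^s\,{\rm dist}(\cdot,B_\rho(z))^s/\rho^s$. The comparison function is $\phi=\lambda w_\rho$ with $\lambda\sim M\rho^{-s}$, plus a far-field correction making $\phi\ge |v|$ outside $A\cap\Omega$. Outside a neighbourhood we have $w_\rho\ge c\rho^s$, which dominates $M$ after choosing $\lambda$. Inside $B_\rho(z)\subset\Omega^c$ we have $v=0\le\phi$. Outside $D_R(\bar x)$, however, $v=u$ may be larger than $M$. We handle this exactly as in Lemma \ref{abh}: truncate $v$ at level $M$ off $D_{R/2}$, and use Lemma \ref{spp} to see that the truncated function $\tilde v$ has $\fpl\tilde v\le h$. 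The H\"older/tail bounds of Lemma \ref{abh} give $h\le C M^{p-1}R^{-ps}$. Since $\rho\sim R$, the barrier satisfies $\fpl(\lambda w_\rho)\ge c\lambda^{p-1}\rho^{-s(p-1)}\cdot$(scale factor) $\sim c M^{p-1}R^{-ps}$, and this dominates $h$ after enlarging $\lambda$ by a constant. Lemma \ref{wcp} applied in $A\cap\Omega$ then gives $v\le \lambda w_\rho\le CMR^{-s}{\rm dist}(x,B_\rho(z))^s$. Applying the same argument to $-v$, and taking $x$ on the normal through $y$, where ${\rm dist}(x,B_\rho(z))=\ds(x)$, we obtain $|v(x)|\le CMR^{-s}\ds^s(x)$ for $x\in D_{cR}(\bar x)$.

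Finally, for $r<cR$ and $x\in D_r(\bar x)$ we have $\ds(x)\le r$. Hence $\sup_{D_r(\bar x)}|v|\le CM(r/R)^s$, which gives $\operatorname{osc}_{D_r(\bar x)}v\le 2CH_\alpha(u)R^\alpha(r/R)^s$. The constant depends only on the data, provided the $\alpha$-dependence from Lemma \ref{abh} is tracked. Since the statement claims independence of $\alpha$, we must be careful here. The tail integral $\int |y-\bar x|^{(p-1)\alpha-N-ps}$ blows up as $\alpha\to p's$, so we would instead bound $\|u\|_{L^\infty}$ crudely near $R$ and exploit $R<\mathrm{diam}\,\Omega$ to avoid that constant. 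The main obstacle is this barrier comparison step, together with the justification that the small-scale annulus geometry (with $\rho$ comparable to $R$, capped at $\bar\rho$) covers all of $D_{cR}(\bar x)$ uniformly.
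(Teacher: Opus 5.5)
Your plan is essentially the paper's proof. It uses the bound $|v|\le M\simeq H_\alpha(u)R^\alpha$ from Lemma~\ref{abh}, and truncation of the exterior datum above $\|u\|_{L^\infty(D_{2R}(\bar x))}$ combined with Lemma~\ref{spp} to get $\fpl\tilde v=h$ with $|h|\lesssim M^{p-1}R^{-ps}$. It then uses the barrier of Proposition~\ref{bar}, rescaled to an exterior tangent ball of radius $\sim R$ and multiplied by $\lambda\simeq MR^{-s}$, together with Lemma~\ref{wcp} and the split $r\ge\eps R$ versus $r<\eps R$. The one real difference is that the paper does not move the tangent point. It uses only the ball $B_R(y_0)$ tangent at $\bar x$ and the trivial bound ${\rm dist}(x,B_R(y_0))\le|x-\bar x|<r$ on $D_r(\bar x)$. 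So normal projections and your stronger intermediate bound $|v|\lesssim M({\rm d}_\Omega/R)^s$ are unnecessary. Two points need fixing.

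First, the assertion $w_\rho\ge c\rho^s$ on $\Omega\setminus B_{\eps\rho}(y)$ is what gives $\tilde v\le\lambda w_\rho$ off the comparison set, and it does not follow from an exterior ball of radius $\rho$ alone. If $\partial\Omega$ follows $\partial B_\rho(z)$ along an arc away from $y$, then ${\rm dist}(\cdot,B_\rho(z))$ is arbitrarily small at points of $\Omega$ outside $B_{\eps\rho}(y)$. At those points you have no a priori smallness of $\tilde v$. The paper closes this with a second exterior ball $B_{2R}(y_1)$ tangent at the same point. This gives ${\rm dist}(\cdot,B_R(y_0))\ge(\sqrt{1+\eps^2/2}-1)R$ on $\Omega\setminus\overline D_{\eps R}(\bar x)$. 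In your setup you therefore need $\rho\le\bar\rho/2$, not $\rho=\min\{\bar\rho,\eps'R\}$. Also, for $w_\rho=\rho^sw((\cdot-z)/\rho)$ one has $\fpl w_\rho\ge\rho^{-s}/C$, hence $\fpl(\lambda w_\rho)\gtrsim\lambda^{p-1}\rho^{-s}\simeq M^{p-1}\rho^{-ps}$. Your displayed exponents are garbled, though your conclusion is right.

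Second, your remedy for $\alpha$-independence fails. Bounding $u$ by $H_\alpha(u){\rm diam}(\Omega)^\alpha$ in the tail loses a factor $({\rm diam}(\Omega)/R)^{(p-1)\alpha}$ against the needed $M^{p-1}R^{-ps}$, and this factor is unbounded as $R\to0$. In fact no remedy exists when $p's\le1$. Take $p=2$, $s\le1/2$, $s<\alpha<2s$, and $u={\rm d}_\Omega^\alpha\in W^{s,2}_0(\Omega)$, so $H_\alpha(u)\le1$.
\begin{itemize}
\item The function $v-u\chi_{B^c_{2R}(\bar x)}$ is nonnegative off $D_R(\bar x)$.
\item In $D_R(\bar x)$ its fractional Laplacian is $\gtrsim\int_{2R}^{\bar\rho/2}t^{\alpha-2s-1}\,dt$. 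To see this, integrate over a nontangential cone, where ${\rm d}_\Omega(y)\ge|y-\bar x|/4$.
\item Comparing with the torsion function of $D_R(\bar x)$ gives $v\ge c(2s-\alpha)^{-1}R^\alpha$ at the point at distance $R/4$ from $\bar x$ along the normal, for $R$ small depending on $\alpha$.
\end{itemize}
Hence $C\gtrsim(2s-\alpha)^{-1}$. The paper's proof does not achieve independence either: it imports the constants of Lemma~\ref{abh}, which blow up as $(p-1)\alpha\to ps$. What the argument proves is a constant uniform for $\alpha\in[0,\alpha_1]$ with $\alpha_1<\min\{1,p's\}$. That is also all Lemma~\ref{ite} needs, since there $\alpha<s<p's$. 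Keep the $\alpha$-dependence.
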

\begin{proof}
First we point out that
\beq\label{hoe1}
\|u\|_{L^\infty(D_{2R}(\bar x))} \le H_\alpha(u)(2R)^\alpha.
\eeq
Indeed, if $\alpha=0$, then \eqref{hoe1} directly follows from \eqref{hal}. If $\alpha>0$, then assuming $u\in C^\alpha(\overline\Omega)$ (which will be tacitly assumed henceforth), since $u(\bar x)=0$ we have for all $x\in D_{2R}(\bar x)$
\[|u(x)| \le [u]_{C^\alpha(\overline\Omega)}|x-\bar x|^\alpha \le H_\alpha(u)(2R)^\alpha.\]
By Lemma \ref{abh} (with $x_0=\bar x$) and \eqref{hoe1} we have
\beq\label{hoe2}
\|v\|_{L^\infty(D_{2R}(\bar x))} \le C\|u\|_{L^\infty(D_{2R}(\bar x))}+C[u]_{C^\alpha(\overline\Omega)}R^\alpha \le C_1H_\alpha(u) R^\alpha,
\eeq
with $C_1>0$ depending on the data (recall that $R$ is bounded from above and $\alpha\le 1$).  Now we argue as in Lemma \ref{abh}, setting for all $x\in\R^N$
\[\tilde v(x) = \begin{cases}
v(x) & \text{if $x\in D_R(\bar x)$} \\
\min\left\{u(x),\,\|u\|_{L^\infty(D_{2R}(\bar x))}\right\} & \text{if $x\in D_R^c(\bar x)$.}
\end{cases}\]
Up to taking $C_1$ even bigger, we have
\beq\label{hoe3}
\|\tilde v\|_{L^\infty(\Omega)} \le C_1H_\alpha(u) R^\alpha.
\eeq
As in the proof of Lemma \ref{abh} we have
\[\fpl\tilde v = h(x) \ \text{in $D_R(\bar x)$,}\]
with $h$ obeying
\begin{equation}\label{hoe4}
\|h\|_{L^\infty(D_R(\bar x))} \le \frac{CH^{p-1}_\alpha(u)}{R^{ps-(p-1)\alpha}}+\frac{CH_\alpha(u)\|v\|_{L^\infty(D_R(\bar x))}^{p-2}}{R^{ps-\alpha}} \le \frac{C_2H^{p-1}_\alpha(u)}{R^{ps-(p-1)\alpha}},
\end{equation}
where we have absorbed the second term into the first one by using \eqref{hoe2}. Next, let $0<\bar\rho<1$ be as in Lemma \ref{geo}, and assume $R\le\bar\rho/2$. Then we can find $y_0,y_1\in\Omega^c$ s.t.\ both balls $B_R(y_0)$, $B_{2R}(y_1)$ are exteriorly tangent to $\partial\Omega$ at $\bar x$ (see Figure \ref{fig2}).

\begin{figure}
\centering
\begin{tikzpicture}[scale=2.5]
\clip (-2.2, -1) rectangle (2.2, 1.6);
\begin{scope}
\clip (-2.2, -1)  .. controls (-1.2, 0) and (-0.5, 0) .. (0, 0) .. controls (0.5, 0) and (0.9, 0) .. (2.2, -0.6);
\filldraw[lightgray] (0, 0) circle (0.55);
\draw (0, 0) circle (0.55);
\end{scope} 
\draw(-1.6, -0.7) node{$\Omega$};
\draw[very thick] (-2.2, -1)  .. controls (-1.2, 0) and (-0.5, 0) .. (0, 0) .. controls (0.5, 0) and (0.9, 0) .. (2.2, -0.6);
\draw (0, 1.1) node[above]{$y_1$} circle (1.1cm);
\draw (0, 0.55) node[left]{$y_0$} circle (0.55cm);
\draw (0, 0) node[below]{$\bar x$};
\draw (0, 0)  circle (0.3cm);
\filldraw (0, 0) circle (0.5pt) (0, 1.1) circle (0.5pt) (0, 0.55) circle (0.5pt);
\draw[very thin] (0, 1.1) -- node[above, midway, sloped]{$2R$} (1, 1.55) (0, 0.55) -- node[above, midway, sloped]{$R$} (0.5,0.78) (0, 0) -- node[above, midway, sloped]{$\eps R$} (0.28,0.11) ;
\end{tikzpicture}
\caption{The set $D_R(\bar x)$ in grey and the other auxiliary points.}
\label{fig2}
\end{figure}

\vskip2pt
\noindent
Let now $\eps\in(0,1)$, $w$ be given by Proposition \ref{bar} for $e=(\bar x-y_0)/R$ and set 
\[w_R(x) = w\Big(\frac{x-y_0}{R}\Big).\]
Then, still we have $w_R\in C^s(\R^N)\cap \widetilde{W}^{s, p}(B_{\eps R}(\bar x)\setminus\overline{B}_R(y_0))$ and by the homogeneity and scaling properties of $\fpl$  we have
\beq\label{hoe5}
\begin{cases}
\displaystyle\fpl w_R \ge \frac{1}{C_3R^{ps}} & \text{in $B_{\eps R}(\bar x)\setminus\overline{B}_R(y_0)$} \\
\displaystyle\frac{  \dd_R^s}{C_3} \le w_R \le C_3  \dd_R^s & \text{in $\R^N$,}
\end{cases}
\eeq
where $C_3>0$ depends on $N,p,s$ and for all $x\in\R^N$ we have set for simplicity
\[  \dd_R(x) = \frac{1}{R}{\rm dist}(x,B_R(y_0)).\]
Fix $0<r\le R$, and distinguish two cases:
\begin{itemize}[leftmargin=1cm]
\item[$(a)$] If $r\ge\eps R$, then by \eqref{hoe2} we have
\[\underset{D_r(\bar x)}{\rm osc}\,v \le 2\|v\|_{L^\infty(D_R(\bar x))} \le 2C_1H_\alpha(u) R^\alpha \le \frac{2C_1}{\eps^s}H_\alpha(u) R^\alpha\Big(\frac{r}{R}\Big)^s,\]
which is the conclusion with $C=2C_1/\eps^s>0$ depending on the data.
\item[$(b)$] If $r<\eps R$, then $D_r(\bar x)\subset B_{\eps R}(\bar x)\setminus\overline{B}_R(y_0)$ and $\Omega\setminus\overline{D}_{\eps R}(\bar x)\subset\overline{B}_{2R}^c(y_1)\setminus\overline{B}_{\eps R}(\bar x)$. By such inclusions we have
\[
\inf_{\Omega\setminus\overline{D}_{\eps R}(\bar x)}\,  \dd_R \ge \inf_{\overline{B}_{2R}^c(y_1)\setminus\overline{B}_{\eps R}(\bar x)}\,  \dd_R \\
= \sqrt{\frac{\eps^2}{2}+1}-1 =: c,
\]
with $c\in(0,1)$ depending on the data, and the final passage deriving from a direct geometrical computation. Set
\[\mu = \max\Big\{(C_2C_3)^\frac{1}{p-1},\,\frac{C_1C_3}{c^s}\Big\}H_\alpha(u),\]
with $C_i$ ($i=1,2,3$) as in the previous estimates, so that $\mu>0$ depends on $N,p,s$, and on $\alpha$ through $H_\alpha(u)$ only. Set also for all $x\in\R^N$
\[\tilde w(x) = \mu R^\alpha w_R(x).\]
We will now see that $\tilde w$ is a barrier for $\tilde v$ in $D_{\eps R}(\bar x)$. By \eqref{hoe5}, \eqref{hoe4}, and the definition of $\mu$, we have for all $x\in D_{\eps R}(\bar x)$
\begin{align*}
\fpl\tilde w(x) &= \mu^{p-1}R^{(p-1)\alpha}\fpl w_R(x) \\
&\ge \frac{\mu^{p-1}}{C_3R^{ps-(p-1)\alpha}} \ge \frac{C_2H_\alpha^{p-1}(u)}{R^{ps-(p-1)\alpha}} \ge h(x).
\end{align*}
Besides, by \eqref{hoe5}, the lower bound on $  \dd_R$, the definition of $\mu$, and \eqref{hoe3}, for all $x\in\Omega\setminus\overline{D}_{\eps R}(\bar x)$ we have
\begin{align*}
\tilde w(x) &\ge \frac{\mu R^\alpha  \dd_R^s(x)}{C_3} \ge \frac{C_1H_\alpha(u)\dd_R^s(x)}{c^s} \\
&\ge C_1H_\alpha(u) R^\alpha \ge \tilde v(x),
\end{align*}
while obviously for all $x\in\Omega^c$
\[\tilde w(x) \ge 0 = \tilde v(x).\]
Summarizing, we have
\[\begin{cases}
\fpl\tilde v \le \fpl\tilde w & \text{in $D_{\eps R}(\bar x)$} \\
\tilde v \le \tilde w & \text{in $D_{\eps R}^c(\bar x)$.}
\end{cases}\]
By Lemma \ref{wcp} we deduce $\tilde v\le\tilde w$ in all of $\R^N$. In particular, by \eqref{hoe5}, for all $x\in D_r(\bar x)\subset D_{\eps R}(\bar x)$ we have
\[\tilde v(x) \le \tilde w(x) \le C_3\mu R^\alpha  \dd_R^s(x).\]
Arguing similarly on $-\tilde v$, we find the symmetrical estimate
\[\tilde v(x) \ge -C_3\mu R^\alpha  \dd_R^s(x).\]
By construction, for all $x\in D_r(\bar x)$ we have both $\tilde v(x)=v(x)$ and
\[\dd_R(x) \le \frac{|x-\bar x|}{R} \le \frac{r}{R}.\]
Therefore, by definition of $\mu$ and the previous two-sided bounds on $\tilde v$, we have
\begin{align*}
\underset{D_r(\bar x)}{\rm osc}\,v &\le 2C_3\mu R^\alpha\sup_{D_r(\bar x)}\,\dd_R^s \\
&\le 2C_3\max\Big\{(C_2C_3)^\frac{1}{p-1},\,\frac{C_1C_3}{c^s}\Big\}H_\alpha(u) R^\alpha\Big(\frac{r}{R}\Big)^s,
\end{align*}
which yields the conclusion for a $C>0$ depending on the data.
\end{itemize}
There remains to examine the case $R>\bar\rho/2$. Given $0<r\le R$, if $r\le\bar\rho/2$ then we repeat the previous argument with $R$ replaced by $\bar\rho/2$ and find
\begin{align*}
\underset{D_r(\bar x)}{\rm osc}\,v &\le CH_\alpha(u)\Big(\frac{\bar\rho}{2}\Big)^\alpha\Big(\frac{2r}{\bar\rho}\Big)^s \\
&\le C\Big(\frac{2}{\bar\rho}\Big)^s{\rm diam}(\Omega)^sH_\alpha(u) R^\alpha\Big(\frac{r}{R}\Big)^s = C_\Omega H_\alpha(u) R^\alpha\Big(\frac{r}{R}\Big)^s,
\end{align*}
with $C_\Omega>0$ depending on the data (we emphasize the dependance on $\Omega$ in this case). Finally, if $\bar\rho/2<r\le R$, then simply by \eqref{hoe2}
\begin{align*}
\underset{D_r(\bar x)}{\rm osc}\,v &\le 2\|v\|_{L^\infty(D_R(\bar x))} \le 2C_1H_\alpha(u) R^\alpha \\
&\le CH_\alpha(u) R^\alpha\Big(\frac{2r}{\bar\rho}\Big)^s \le C_\Omega H_\alpha(u) R^\alpha\Big(\frac{r}{R}\Big)^s,
\end{align*}
again with $C_\Omega>0$ depending on the data. Thus, the desired estimate is achieved in all cases.
\end{proof}

\subsection{Weighted estimates}\label{ss32}

We next focus on estimates for $v/\ds$, when $v$ is the $(s, p)$-harmonic extension of a given $u\in \w$.
The following weighted oscillation estimate near the boundary can be obtained by slightly adapting the proofs of \cite[Theorem 5.1]{IMS1} (for the degenerate regime) and \cite[Proposition 5.1]{IM} (for the singular regime), respectively:

\begin{proposition}\label{boe} 
Let $\bar x\in\partial\Omega$, $0<R\le\bar\rho$ (with $\bar\rho$ as in Lemma \ref{geo}), and $v\in\w$ be $(s, p)$-harmonic in $D_R(\bar x)$ and satisfy $|v|\le C\ds$ in $D_R(\bar x)$. Then, there exist $0<\beta<1$, $C>0$ depending on the data, s.t.\ for all $0<r\le R$
\[\underset{D_r(\bar x)}{\rm osc}\,\frac{v}{\ds} \le C\Big\|\frac{v}{\ds}\Big\|_{L^\infty(D_R(\bar x))}\Big(\frac{r}{R}\Big)^\beta.\]
\end{proposition}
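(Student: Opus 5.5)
The plan is the classical De Giorgi--Krylov--Safonov oscillation-decay scheme for the quotient $v/\ds$, carried out on dyadic scales $r_k=\theta^k R$ with $\theta\in(0,1)$ small. It follows the structure of \cite[Theorem 5.1]{IMS1} and \cite[Proposition 5.1]{IM}.

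\emph{Normalization.} By homogeneity of $\fpl$ we may replace $v$ by $v/L$ with $L=\|v/\ds\|_{L^\infty(D_R(\bar x))}$, so that $|v|\le\ds$ in $D_R(\bar x)$. For each $k$ set
\[m_k\le \inf_{D_{r_k}(\bar x)}\frac{v}{\ds},\qquad M_k\ge\sup_{D_{r_k}(\bar x)}\frac{v}{\ds},\qquad M_0=1,\ m_0=-1.\]
The goal is to construct these sequences, $m_k$ nondecreasing and $M_k$ nonincreasing, with $M_k-m_k=2\theta^{\beta k}$. Summing over scales then gives the claim with $C$ depending only on $\theta,\beta$.

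\emph{Inductive step.} Fix $k$ and a half-ball $D_{\theta r_k}(\bar x)$. Near the boundary (Lemma \ref{geo}) we work in the region $\{\ds<\theta r_k\}$ over a small boundary patch. We distinguish two alternatives according to whether
\[\frac{v}{\ds}-m_k\ge\frac{M_k-m_k}{2}\]
holds on at least half the measure of an interior reference ball $B_{\theta r_k}(x_k)\subset D_{r_k}(\bar x)$ with $\ds\simeq r_k$ (say $x_k$ on the inner normal). By symmetry, replacing $v$ by $-v$, we may assume this alternative.

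We then aim for a \emph{weak Harnack / expansion of positivity} statement at the boundary. Consider $w=v-m_k\ds$, or rather $v-m_k w_\ast$ with $w_\ast$ the barrier of Proposition \ref{bar} comparable to $\ds^s$ near $\bar x$. This function is nonnegative in $D_{r_k}(\bar x)$ but may be negative outside, with the negative part controlled by the induction hypothesis on larger scales: $v/\ds\ge m_j$ on $D_{r_j}$ for $j<k$. Using Lemma \ref{spp}, truncating $w$ to its positive part costs a nonlocal error term. This error is bounded by a tail sum
\[\sum_{j<k}(m_k-m_j)\,\theta^{(k-j)ps}\ \lesssim\ \theta^{\beta k}\sum_{i\ge1}\theta^{i(ps-\beta)}\cdot \theta^{-i\beta}\cdots,\]
which is small relative to $(M_k-m_k)$ once $\beta$ is small compared to $ps$ (and to $s$). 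Combining the lower barrier $w_\ast$ (from Proposition \ref{bar}, properly scaled as in Lemma \ref{hoe}) with the positivity on the reference ball through Lemma \ref{wcp} shows that
\[v-m_k\ds\ge c\,(M_k-m_k)\,\ds\]
in $D_{\theta r_k}(\bar x)$. Here the nonlinear interaction between $m_k\ds$ and the operator must be handled: $\ds$ is not $(s,p)$-harmonic, but $\fpl\ds$ is bounded near $\partial\Omega$ for $C^{1,1}$ domains, which only produces an $O(r_k^{ps})$-size right-hand side, absorbable after rescaling. This yields $m_{k+1}=m_k+c(M_k-m_k)$ and $M_{k+1}=M_k$, hence
\[M_{k+1}-m_{k+1}\le(1-c)(M_k-m_k),\]
and we choose $\theta^\beta=1-c$.

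\emph{Main obstacle.} The key difficulty is the step converting positivity on an interior ball into a pointwise lower bound $\gtrsim\ds$ up to the boundary, uniformly in the scale and in the nonlinear, possibly singular regime $p<2$. Here $v-m_k\ds$ is not a solution of a nice equation, because $\fpl$ is not linear. I would first try to compare $v$ directly with $m_k w_\ast+\delta\,\varphi$, where $\varphi$ is a barrier vanishing at the boundary like $\ds^s$ and positive on the reference ball. Proposition \ref{bar} and the scaling in Lemma \ref{hoe} provide its supersolution property, while Lemma \ref{spp} injects the mass from the reference ball into the operator as a positive contribution. For $p<2$ the quantitative positivity of this nonlocal term requires the inequality $(a-c)^{p-1}-(b-c)^{p-1}$ with explicit lower bounds, which I expect to mirror the treatment in \cite{IM}.
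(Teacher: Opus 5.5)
Your outline has the right architecture: dyadic decay of the oscillation of $v/\ds$, an alternative on an interior reference ball, and bookkeeping of nonlocal tails from previous scales. This is also what the paper takes over, since it proves the proposition only by adapting \cite[Theorem 5.1]{IMS1} and \cite[Proposition 5.1]{IM}. But the step that carries all the content is left open. That step passes from $v/\ds-m_k\ge(M_k-m_k)/2$ on half of the reference ball to $v-m_k\ds\ge c\,(M_k-m_k)\,\ds$ in $D_{\theta r_k}(\bar x)$, with $c$ independent of $k$ and of $m_k$. You flag it as the main obstacle, and the mechanisms you sketch for it do not work as written.

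\emph{The core step.} Since $\fpl$ is nonlinear, $v-m_k\ds$ solves no equation. So there is nothing to truncate via Lemma \ref{spp}, and no weak Harnack inequality applies to it.
\begin{itemize}
\item Boundedness of $\fpl\ds$ near $\partial\Omega$ controls $\fpl(m_k\ds)$, but not $\fpl(m_k\ds+\delta\varphi)$. Here $|m_k|$ may stay of order one while $\delta\sim M_k-m_k\to0$. Linearizing around $m_k\ds$ produces the degenerate or singular weight $|m_k|^{p-2}|\ds(x)-\ds(y)|^{p-2}$, so the perturbation cannot simply be added on.
\item The function of Proposition \ref{bar} is a \emph{super}solution attached to an exterior tangent ball. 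It bounds $|v|$ from above, which is how Lemmas \ref{hoe} and \ref{hwe} use it, but it can never push $v$ up.
\item ${\rm dist}(x,B_R(y_0))^s$ is comparable to $\ds(x)$ only along the normal through $\bar x$. At boundary points $x\neq\bar x$ it is of order $(|x-\bar x|^2/R)^s$, while $\ds(x)=0$. Hence $v\ge m_k w_\ast$ is not the statement $v/\ds\ge m_k$.
\end{itemize}
What is needed is a genuine subsolution, bounded below by a multiple of $\ds$ near $\bar x$. It would have the form $m_k\ds+\delta\psi$ plus the mass of the reference ball injected through Lemma \ref{spp}, with an upper bound on its fractional $p$-Laplacian uniform in $m_k$. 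This requires lower bounds for the increments $(a+b)^{p-1}-a^{p-1}$, $b\ge0$. These behave very differently for $p\ge2$ (at least $2^{2-p}b^{p-1}$, independently of $a$) and for $p<2$ (only $(p-1)b(|a|+b)^{p-2}$). That analysis is what the cited proofs supply. In your proposal the contraction $M_{k+1}-m_{k+1}\le(1-c)(M_k-m_k)$ is asserted, not derived.

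\emph{The tail bookkeeping.} Your normalization $m_0=-1$, $M_0=1$ only encodes $|v|\le L\ds$ in $D_R(\bar x)$. The sum over $j<k$ only accounts for annuli inside $D_R(\bar x)$. The contribution of $\Omega\setminus D_R(\bar x)$, where nothing is assumed on $v$, is dropped. For a nonlocal operator this term enters every comparison step, and it is governed by the size of $v$ outside $D_R(\bar x)$, not by $L$. In fact one should not expect any decay estimate in terms of $\|v/\ds\|_{L^\infty(D_R(\bar x))}$ alone. Already for $p=2$, functions $s$-harmonic in a ball can be uniformly small there and still oscillate at arbitrarily small scales, at the price of large exterior data. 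You have to run the induction with a global bound, e.g.\ $L=\|v/\ds\|_{L^\infty(\Omega)}$, or carry an explicit tail term. The global bound is what is actually available where the proposition is used: in Lemma \ref{nov}, $v$ coincides with $u$ off $D_{3R}(\bar x)$, and Lemma \ref{hwe} controls $v/\ds$ inside.
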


\noindent
We conclude with a weighted $L^\infty$-bound on the $(s, p)$-harmonic extension of a function, close to the boundary:

\begin{lemma}\label{hwe}
There exist $0<\tilde\rho<1$, $C>0$ depending on the data, s.t.\ for all $u\in\w$, $x_0\in\Omega_{\tilde\rho}$, $0<R<\tilde\rho$, the $(s, p)$-harmonic extension $v\in\w$ of $u$ in $D_R(x_0)$ satisfies
\[\Big\|\frac{v}{\ds}\Big\|_{L^\infty(D_R(x_0))} \le C\Big\|\frac{u}{\ds}\Big\|_{L^\infty(\Omega\setminus D_R(x_0))}.\]
\end{lemma}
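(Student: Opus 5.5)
Set $M=\|u/\ds\|_{L^\infty(\Omega\setminus D_R(x_0))}$; we may assume $M<\infty$. The claim is homogeneous, so by symmetry (apply the argument to $-u,-v$) it suffices to show $v\le CM\ds$ in $D_R(x_0)$. The plan is a comparison argument via Lemma \ref{wcp}. We build a supersolution $W$ of $\fpl W\ge 0$ in $D_R(x_0)$ with $W\ge u$ in $D_R^c(x_0)$ and $W\le CM\ds$ in $D_R(x_0)$. Outside $D_R(x_0)$ we have $u\le M\ds$. So a natural first choice is to compare $v$ with the solution of the torsion-type problem with exterior datum $M\ds$. Since $\fpl\ds$ is not signed in general, we instead use the standard barrier $\ds$ corrected by a torsion term.

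\textbf{Step 1.} We use a known fact (as in \cite{IMS}) that in a thin layer $\Omega_{\tilde\rho}$ the function $\ds$ satisfies $|\fpl \ds|\le C_0$ weakly; for $C^{1,1}$ domains this bounded-Laplacian estimate is available for $\dd_\Omega^s$. Choosing $\tilde\rho$ small ensures $D_R(x_0)\subset\Omega_{2\tilde\rho}$ whenever $x_0\in\Omega_{\tilde\rho}$ and $R<\tilde\rho$.

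\textbf{Step 2.} Let $w_\Omega\in\w$ solve $\fpl w_\Omega=1$ in $\Omega$. By Theorem \ref{bhr} and the Hopf-type lower bound of \cite{IMS} we have $c\ds\le w_\Omega\le C\ds$. Take the candidate $W=M(\ds+Kw_\Omega)$, or more simply $W=KM w_\Omega$ with $K$ large. For the latter, $\fpl W=(KM)^{p-1}\ge0$ in $\Omega\supset D_R(x_0)$. Moreover $W\ge KMc\ds\ge M\ds\ge u$ in $\Omega\setminus D_R(x_0)$ once $K\ge 1/c$, and $W=0=u$ in $\Omega^c$. Then Lemma \ref{wcp} gives $v\le W\le CKM\ds$ in $D_R(x_0)$.

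This makes Step 1 unnecessary; the whole proof reduces to the two-sided bound $c\ds\le w_\Omega\le C\ds$. The upper bound follows from Theorem \ref{bhr}\ref{bhr2}, since $w_\Omega/\ds$ is bounded. The lower bound is the Hopf lemma, obtained from the barrier of Proposition \ref{bar} placed on interior tangent balls from Lemma \ref{geo}, exactly as in \cite{IMS}.

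The main obstacle is justifying this Hopf lower bound uniformly. If it cannot be quoted, we would prove it by comparing $w_\Omega$, on each interior ball $B_{\bar\rho}(x_0)$ of Lemma \ref{geo}, with a multiple of $(\bar\rho^2-|x-x_0|^2)_+^s$-type subsolutions. This also explains the smallness $\tilde\rho$ in the statement: the constants must stay uniform near $\partial\Omega$. A further check is that $W\in\widetilde W^{s,p}(D_R(x_0))$ and that the test functions are admissible, which holds since $w_\Omega\in\w$.
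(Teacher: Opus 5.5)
Your proof is correct, and it takes a genuinely different route from the paper's.

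\textbf{The paper's route.} The paper never builds a global supersolution. For each $x_1\in D_R(x_0)$ it takes the projection $\bar x$ of $x_1$ onto $\partial\Omega$ and the exterior ball $B_{\bar\rho}(y_1)$ tangent at $\bar x$. It then rescales the barrier of Proposition \ref{bar} and uses that $\tilde{\dd}={\rm dist}(\cdot,B_{\bar\rho}(y_1))$ satisfies $\tilde{\dd}\ge\dd_\Omega$ everywhere, with equality at $x_1$. Comparison therefore bounds $v/\ds$ at the single point $x_1$. This needs only the one-sided exterior barrier. However, that barrier is a supersolution only in $B_{\eps\bar\rho}(\bar x)\setminus\overline{B}_{\bar\rho}(y_1)$, which is what forces $x_0\in\Omega_{\tilde\rho}$ and $R<\tilde\rho$.

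\textbf{Your route.} A single comparison with $KMw_\Omega$ gives the estimate for every $x_0\in\Omega$ and every $R>0$. So your closing remark that $\tilde\rho$ is explained by your argument is off: your proof does not need it, and Step 1 is superfluous. The price is the two-sided bound $c\,\ds\le w_\Omega\le C\ds$.

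\textbf{The Hopf half.} The upper half is indeed Theorem \ref{bhr}\ref{bhr2} with $f\equiv1$. For the lower half, Proposition \ref{bar} is of no use: it is a supersolution outside a ball, so it only yields upper bounds. You must either quote a Hopf lemma for $\fpl$, or run your fallback argument properly, as follows.
\begin{itemize}
\item By Lemma \ref{wcp}, $w_\Omega\ge w_B$ for the torsion function $w_B$ of any ball $B\subseteq\Omega$.
\item At points with $\dd_\Omega(x)\ge\bar\rho$, this gives $w_\Omega(x)\ge w_{B_{\bar\rho}(x)}(x)=\bar\rho^{p's}w_{B_1}(0)>0$.
\item In $\Omega_{\bar\rho}$, use the interior tangent balls $B_{\bar\rho}(z)$ of Lemma \ref{geo}. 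By Lemma \ref{geo}\ref{geo3}, ${\rm dist}(x,B_{\bar\rho}^c(z))=\dd_\Omega(x)$, so $w_\Omega(x)\ge c\,\bar\rho^{p's-s}\ds(x)$.
\end{itemize}
The last step requires the Hopf bound $w_{B_1}\ge c(1-|x|)^s$ on the unit ball. Equivalently, you need a subsolution such as $(1-|x|^2)_+^s$ whose fractional $p$-Laplacian is bounded above in $B_1$. That is a known fact but not a consequence of anything stated in this paper, so it must be cited. With it, your proof is complete and in fact proves a stronger, global version of the lemma.
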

\begin{proof}
Let $\bar\rho$ be given by Lemma \ref{geo} and assume, without loss of generality, that $0<\bar\rho<1$.  For $\eps$ given in Proposition \ref{bar}, define $\tilde\rho$, depending on the data, as
\[\tilde\rho = \frac{\eps\bar\rho}{4} \in (0,1).\]
Now fix $x_0\in\Omega_{\tilde\rho}$ and $0<R<\tilde\rho$, and assume $u/\ds\in L^\infty(\Omega\setminus D_R(x_0))$ (otherwise there is nothing to prove). We recall that $v\in\w$ is the unique solution of \eqref{hex}. Further, let $x_1\in D_R(x_0)$ be s.t.\ $v(x_1)\ge 0$ (alternatively, we work with $-u$, $-v$). Since $ \dd_\Omega$ is $1$-Lipschitz continuous, we have
\[\dd_\Omega(x_1) \le  \dd_\Omega(x_0)+|x_1-x_0| \le \tilde\rho+R < 2\tilde\rho,\]
in particular $ \dd_\Omega(x_1)<\bar\rho$. Then, Lemma \ref{geo} implies the existence of a unique $\bar x\in\partial\Omega$ s.t.\
\[|x_1-\bar x| =  \dd_\Omega(x_1) < 2\tilde\rho,\]
and a unique $y_1\in\Omega^c$ s.t.\ the line through $x_1$ and $y_1$ is normal to $\partial\Omega$ at $\bar x$, and
\[|y_1-\bar x| = \bar\rho = \frac{4\tilde\rho}{\eps}.\]
Therefore, the ball $B_{\bar\rho}(y_1)$ is exteriorly tangent to $\partial\Omega$ at $\bar x$ (see Figure \ref{fig3}).

\begin{figure}
\centering
\begin{tikzpicture}[scale=2.7]
\begin{scope}
\clip  (-2, -1)  .. controls (-1.2, 0) and (-0.5, 0) .. (0, 0) .. controls (0.5, 0) and (0.9, 0) .. (2, -0.5);
\filldraw[lightgray] (-0.1,-0.1) circle (0.24cm);
\draw (-0.1,-0.1) circle (0.24cm);
\end{scope}
\draw[very thin, dashed] (0, -0.18) -- (0, 1.1);
\draw(-1.3, -0.5) node{$\Omega$};
\clip (-2, -0.6) rectangle (2.15, 1.6);
\draw[very thick] (-2, -1)  .. controls (-1.2, 0) and (-0.5, 0) .. (0, 0) .. controls (0.5, 0) and (0.9, 0) .. (2, -0.5);
\draw (0, 1.1) circle (1.1cm);
\draw (0, 0) circle (0.4cm);
\draw (0, 0) node[above left]{$\bar x$} (0, 1.1) node[left]{$y_1$};
\filldraw (0, 0) circle (0.5pt) (0, 1.1) circle (0.5pt) (0, -0.18) node[left]{$x_1$} circle (0.5pt);
\draw[very thin] (0, 1.1) -- node[above, midway, sloped]{$\bar\rho$} (1, 1.55) (0, 0) -- node[above, midway, sloped]{$\eps\bar\rho$} (0.37, 0.15);
\end{tikzpicture}
\caption{The set $D_R(x_0)$ in gray and the other auxiliary points.}
\label{fig3}
\end{figure}

\vskip2pt
\noindent
Upon rescaling and translating the supersolution $w$ given in Proposition \ref{barrierina} we obtain a function  $\tilde w\in C^s(\R^N)\cap \widetilde{W}^{s,p}(B_{\eps\bar\rho}(\bar x)\setminus\overline{B}_{\bar\rho}(y_1))$ s.t.\
\beq\label{hwe1}
\begin{cases}
\fpl\tilde w > 0 & \text{in $B_{\eps\bar\rho}(\bar x)\setminus\overline{B}_{\bar\rho}(y_1)$} \\
\displaystyle\frac{\tilde\ds}{C} \le \tilde w \le C\tilde\ds & \text{in $\R^N$,}
\end{cases}
\eeq
where $C>1$ depends on the data and for all $x\in\R^N$ we have set
\[\tilde \dd_\Omega(x) = {\rm dist}(x,B_{\bar\rho}(y_1)),\]
in particular $\tilde \dd_\Omega(x_1)= \dd_\Omega(x_1)$ and $\tilde \dd_\Omega\ge \dd_\Omega$ in all of $\R^N$. Another geometrical observation is that, since $R<\tilde\rho$ and $ \dd_\Omega(x_1)<2\tilde\rho$, for all $x\in D_R(x_0)$ we have
\[|x-\bar x| \le |x-x_0|+|x_0-x_1|+|x_1-\bar x| \le 2R+ \dd_\Omega(x_1) < 4\tilde\rho=\eps\bar\rho,\]
that is, $D_R(x_0)\subseteq B_{\eps\bar\rho}(\bar x)\setminus\overline{B}_{\bar\rho}(y_1)$. Set now
\[\mu = C\Big\|\frac{u}{\ds}\Big\|_{L^\infty(\Omega\setminus D_R(x_0))} > 0,\]
with $C>1$ as in \eqref{hwe1}. We next show the function $\mu\tilde w\in\widetilde{W}^{s,p}(\Omega)$ is an upper barrier for $v$. Indeed, by \eqref{hex} \eqref{hwe1} we have in $D_R(x_0)$
\[\fpl(\mu\tilde w) > 0 = \fpl v,\]
while for all $x\in D_R(x_0)$
\[\mu\tilde w(x) \ge \frac{\mu\tilde\dd^s(x)}{C} \ge \Big\|\frac{u}{\ds}\Big\|_{L^\infty(\Omega\setminus D_R(x_0))}\ds(x) \ge u(x) = v(x).\]
Summarizing,
\[\begin{cases}
\fpl v \le \fpl(\mu\tilde w) & \text{in $D_R(x_0)$} \\
v \le \mu\tilde w & \text{in $D_R^c(x_0)$.}
\end{cases}\]
By Lemma \ref{wcp} we have $v\le\mu\tilde w$ in all of $\R^N$, in particular
\[v(x_1) \le \mu\tilde w(x_1) \le \mu C\tilde\ds(x_1) = C^2\Big\|\frac{u}{\ds}\Big\|_{L^\infty(\Omega\setminus D_R(x_0))}\ds(x_1).\]
By arbitrariness of $x_1$, it is proved that in $D_R(x_0)$
\[\frac{v}{\ds} \le C^2\Big\|\frac{u}{\ds}\Big\|_{L^\infty(\Omega\setminus D_R(x_0))},\]
while a symmetric estimate is proved using $-u$, $-v$. Thus, the conclusion is achieved.
\end{proof}

\section{Weighted H\"older regularity}\label{sec4}

\noindent
This section is devoted to the proof of Theorem \ref{fbr}. Our strategy consists in dividing the proof in two steps: first, we make the {\em qualitative} assumption that $f\in L^\infty(\Omega)$, and we use the known regularity theory for such case to obtain the core variance estimates on $u/\ds$, involving the norm of $f$ in $L^q(\Omega)$ ($q>N/s$); then, we remove such assumption and perform an approximation of $f$ via bounded functions, showing that the variance estimates previously obtained are stable. The ultimate estimate will provide the desired regularity.
\vskip2pt
\noindent
Our first results bounds the variance of $u/\ds$, defined as in \eqref{var}, by means of that of its $(s,p)$-harmonic extension, plus some perturbative terms:

\begin{lemma}\label{wve}
Let $f\in L^\infty(\Omega)$, $u\in\w$ be the solution of \eqref{dir}, $q>N/s$, $0<\gamma<1$ be defined as
\[\gamma = \frac{1}{\max\{p,2\}}\Big(s-\frac{N}{q}\Big).\]
Also, let $\bar x\in\overline\Omega$, $R>0$ be s.t.\ $ \dd_\Omega(\bar x)\le 2R$, and $v\in\w$ be the $(s, p)$-harmonic extension of $u$ in $D_R(\bar x)$. Then, there exists $C>0$ depending on the data and $q$, s.t.\ for all $x_0\in\Omega$, $r>0$ with $D_r(x_0)\subseteq D_R(\bar x)$, there holds
\[\sigma\Big(\frac{u}{\ds},x_0,r\Big) \le C\sigma\Big(\frac{v}{\ds},x_0,r\Big)+C\left[\Big\|\frac{u}{\ds}\Big\|_{L^\infty(\Omega)}^p+\|f\|_{L^q(\Omega)}^{p'}\right]R^{N+p\gamma}.\]
\end{lemma}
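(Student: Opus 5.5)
The plan is to compare $u$ with its harmonic extension $v$ through \eqref{propvar}, applied to $u/\ds$ and $v/\ds$. This gives
\[\sigma\Big(\frac{u}{\ds},x_0,r\Big) \le C\sigma\Big(\frac{v}{\ds},x_0,r\Big)+C\int_{D_R(\bar x)}\frac{|u-v|^p}{\ds^p}\,dx,\]
using $D_r(x_0)\subseteq D_R(\bar x)$. Everything then reduces to bounding the weighted $L^p$ norm of $u-v$ over $D_R(\bar x)$ by $C[M^p+\|f\|_{L^q}^{p'}]R^{N+p\gamma}$, where $M=\|u/\ds\|_{L^\infty(\Omega)}$ (finite, since $f\in L^\infty$ and Theorem \ref{bhr} applies). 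Note that $u-v\in W^{s,p}_0(D_R(\bar x))\subseteq\w$. Moreover, since $\dd_\Omega(\bar x)\le 2R$, we have $\dd_\Omega\le 3R$ on $D_R(\bar x)$.

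First I need a pointwise bound $|v|\le CM\ds$ in $D_R(\bar x)$. When $\bar x\in\Omega_{\tilde\rho}$ and $R<\tilde\rho$, this is exactly Lemma \ref{hwe}. In the remaining cases $R$ is bounded below by a constant depending on the data. There I would compare $\pm v$ with $CM$ times the torsion function of $\Omega$, which is $(s,p)$-superharmonic and comparable to $\ds$ by \cite{IMS,IMS1}, using Lemma \ref{wcp}; the constant then depends on $\Omega$. Combined with $|u|\le M\ds$, this gives $|u-v|\le CM\ds$ in $D_R(\bar x)$.

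Next, testing the equations with $u-v$ and using $\fpl v=0$ in $D_R(\bar x)$ yields
\[\langle\fpl u-\fpl v,u-v\rangle=\int_{D_R(\bar x)}f(u-v)\,dx\le CM\|f\|_{L^q}\|\ds\|_{L^{q'}(D_R(\bar x))}\le CM\|f\|_{L^q}R^{N+s-\frac Nq}.\]
\begin{itemize}
\item Case $p\ge2$. By Theorem \ref{fhi} on $\Omega$ and Lemma \ref{mod}, $\int_\Omega|u-v|^p\ds^{-p}\le C[u-v]_{s,p}^p\le CM\|f\|_{L^q}R^{N+s-N/q}$. Young's inequality with exponents $p,p'$ gives $C(M^p+\|f\|_{L^q}^{p'})R^{N+p\gamma}$, since here $p\gamma=s-N/q$.
\item Case $1<p<2$. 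I use Lemma \ref{mos}\ref{mos2} with $\Omega'=D_R(\bar x)$. The second factor is bounded by $(CM^pR^N)^{(2-p)/2}$. Hence
\[\int_{D_R(\bar x)}\frac{|u-v|^p}{\ds^p}\,dx\le C\|f\|_{L^q}^{p/2}M^{p(3-p)/2}R^{N+\frac p2(s-\frac Nq)}.\]
Here $\frac p2(s-\frac Nq)=p\gamma$. Young's inequality with exponents $\frac2{p-1}$ and $\frac2{3-p}$ turns the product into $\|f\|_{L^q}^{p'}+M^p$.
\end{itemize}

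The main obstacle is the uniform weighted bound on $v/\ds$ when $R$ is not small. Lemma \ref{hwe} only covers small $R$ near the boundary, so I must rely on a global barrier comparable to $\ds$, namely the torsion function. Once that is in place, the remaining steps are Hölder and Young computations.
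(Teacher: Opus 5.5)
Your proof is correct and, for small $R$, follows the paper's route: \eqref{propvar}, the weighted bound of Lemma \ref{hwe}, testing with $u-v$, then Theorem \ref{fhi} with Lemma \ref{mod} for $p\ge 2$ or Lemma \ref{mos} \ref{mos2} for $p<2$, and Young's inequality. The only difference is the case $R\ge\tilde\rho/2$, which is not actually an obstacle: the paper simply bounds $\sigma(u/\ds,x_0,r)\le C\|u/\ds\|_{L^\infty(\Omega)}^p|D_r(x_0)|\le C\tilde\rho^{-p\gamma}\|u/\ds\|_{L^\infty(\Omega)}^pR^{N+p\gamma}$, so no estimate on $v$ is needed there. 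In particular, the Hopf-type lower bound $w_\Omega\ge c\ds$ for the torsion function, which your comparison requires, can be avoided. Your torsion barrier is nonetheless valid, and it would in fact give $|v|\le CM\ds$ for every $R$.
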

\begin{proof}
First note that, due to the assumption $f\in L^\infty(\Omega)$, by Theorem \ref{bhr} \ref{bhr2} we have $u/\ds\in C^\alpha(\overline\Omega)$ for some positive $\alpha$, in particular such quotient is bounded in $\Omega$. Now let $\tilde\rho\in(0,1)$ be as in Lemma \ref{hwe}.
\vskip2pt
\noindent
If $R\ge\tilde\rho/2$, then the conclusion is achieved immediately by the following argument. Whenever $D_r(x_0)\subseteq D_R(\bar x)$, we have
\begin{align*}
\sigma\Big(\frac{u}{\ds},x_0,r\Big) &\le C\Big\|\frac{u}{\ds}\Big\|_{L^\infty(\Omega)}^p|D_r(x_0)| \\
&\le C\Big\|\frac{u}{\ds}\Big\|_{L^\infty(\Omega)}^p\frac{R^{N+p\gamma}}{\tilde\rho^{p\gamma}} \\
&\le C\sigma\Big(\frac{v}{\ds},x_0,r\Big)+C\left[\Big\|\frac{u}{\ds}\Big\|_{L^\infty(\Omega)}^p+\|f\|_{L^q(\Omega)}^{p'}\right]R^{N+p\gamma},
\end{align*}
with $C>0$ depending on the data (as is $\tilde\rho$).
\vskip2pt
\noindent
Therefore, we assume henceforth $0<R<\tilde\rho/2$ and $D_r(x_0)\subseteq D_R(\bar x)$. By Lemma \ref{hwe}, we have
\beq\label{wve1}
\Big\|\frac{v}{\ds}\Big\|_{L^\infty(D_R(\bar x))} \le C\Big\|\frac{u}{\ds}\Big\|_{L^\infty(\Omega\setminus D_R(\bar x))},
\eeq
with $C>0$ depending on the data. Besides, by \eqref{propvar} we have
\beq\label{wve2}
\sigma\Big(\frac{u}{\ds},x_0,r\Big) \le C\sigma\Big(\frac{v}{\ds},x_0,r\Big)+C\int_{D_r(x_0)}\Big|\frac{u-v}{\ds}\Big|^p\,dx.
\eeq
The main step then consists in estimating the last term. To do so, we distinguish between the degenerate and singular regimes:
\begin{itemize}[leftmargin=1cm]
\item[$(a)$] If $p\ge 2$, then we begin by applying Theorem \ref{fhi} in $\Omega$, which is an admissible domain since it is bounded and with a $C^{1,1}$-boundary (see Lemma \ref{geo}). Next we apply Lemma \ref{mod}, test \eqref{dir} with $u-v\in\w$ and recall that $v$ is $(s, p)$-harmonic in $D_R(\bar x)$. Then, we use \eqref{wve1} and H\"older's inequality:
\begin{align*}
\int_{D_r(x_0)}\Big|\frac{u-v}{\ds}\Big|^p\,dx &\le C[u-v]_{s,p}^p \\
&\le C\langle\fpl u-\fpl v,u-v\rangle \\
&\le C\int_{D_R(\bar x)}|f||u-v|\,dx \\
&\le C\Big\|\frac{u-v}{\ds}\Big\|_{L^\infty(D_R(\bar x))}\|f\|_{L^1(D_R(\bar x))}R^s \\
&\le C\Big\|\frac{u}{\ds}\Big\|_{L^\infty(\Omega)}\|f\|_{L^q(\Omega)}|D_R(\bar x)|^\frac{q-1}{q}R^s \\
&\le C\Big\|\frac{u}{\ds}\Big\|_{L^\infty(\Omega)}\|f\|_{L^q(\Omega)}R^{N+(s-\frac{N}{q})}.
\end{align*}
Finally we apply Young's inequality with exponents $p$ and $p'$, so that the definition of $\gamma$ yields:
\[\int_{D_r(x_0)}\Big|\frac{u-v}{\ds}\Big|^p\,dx \le C\left[\Big\|\frac{u}{\ds}\Big\|_{L^\infty(\Omega)}^p+\|f\|_{L^q(\Omega)}^{p'}\right]R^{N+p\gamma}.\]
\item[$(b)$] If $p<2$, then we begin by applying Lemma \ref{mos} \ref{mos2} with $\Omega'=D_R(\bar x)$, noting that $D_r(x_0)\subseteq D_R(\bar x)$:
\[\int_{D_r(x_0)}\Big|\frac{u-v}{\ds}\Big|^p\,dx \le C\langle\fpl u-\fpl v,u-v\rangle^\frac{p}{2}\left[\Big\|\frac{u}{\ds}\Big\|_{L^p(D_R(\bar x))}^p+\Big\|\frac{v}{\ds}\Big\|_{L^p(D_R(\bar x))}^p\right]^\frac{2-p}{2}.\]
To estimate the first term we argue as in case $(a)$:
\[\langle\fpl u-\fpl v,u-v\rangle \le C\Big\|\frac{u}{\ds}\Big\|_{L^\infty(\Omega)}\|f\|_{L^q(\Omega)}R^{N+(s-\frac{N}{q})}.\]
For the second term we use \eqref{wve1}:
\begin{align*}
\Big\|\frac{u}{\ds}\Big\|_{L^p(D_R(\bar x))}^p+\Big\|\frac{v}{\ds}\Big\|_{L^p(D_R(\bar x))}^p &\le \left[\Big\|\frac{u}{\ds}\Big\|_{L^\infty(D_R(\bar x))}^p+\Big\|\frac{v}{\ds}\Big\|_{L^\infty(D_R(\bar x))}^p\right]|D_R(\bar x)|  \\
&\le C\Big\|\frac{u}{\ds}\Big\|_{L^\infty(\Omega)}^pR^N.
\end{align*}
Taking care of the powers and the definition of $\gamma$, we get the following inequality, to which we finally apply Young's inequality with exponents $2/(3-p)$, $2/(p-1)$:
\begin{align*}
\int_{D_r(x_0)}\Big|\frac{u-v}{\ds}\Big|^p\,dx &\le C\Big\|\frac{u}{\ds}\Big\|_{L^\infty(\Omega)}^\frac{p(3-p)}{2}\|f\|_{L^q(\Omega)}^\frac{p}{2}R^{N+\frac{p}{2}(s-\frac{N}{q})} \\
&\le C\left[\Big\|\frac{u}{\ds}\Big\|_{L^\infty(\Omega)}^p+\|f\|_{L^q(\Omega)}^{p'}\right]R^{N+p\gamma}.
\end{align*}
\end{itemize}
In both cases $(a)$ and $(b)$ we have proved the same estimate, which we plug into \eqref{wve2} to obtain the conclusion.
\end{proof}

\noindent
The aim of the next lemmas is to simplify the bound of Lemma \ref{wve}, by removing all terms on the right hand side except the norm of $f$. The first term we can erase is the variance of $v$, at the cost of replacing $\gamma$ with an undetermined exponent:

\begin{lemma}\label{nov}
Let $f\in L^\infty(\Omega)$, $u\in\w$ be the solution of \eqref{dir}, $q>N/s$. Then, there exist $0<\alpha<1$, $C>0$ depending on the data and $q$, s.t.\ for all $x_0\in\Omega$, $r>0$
\[\sigma\Big(\frac{u}{\ds},x_0,r\Big) \le C\left[\Big\|\frac{u}{\ds}\Big\|_{L^\infty(\Omega)}^p+\|f\|_{L^q(\Omega)}^{p'}\right]r^{N+p\alpha}.\]
\end{lemma}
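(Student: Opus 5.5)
We will prove Lemma \ref{nov} by a Campanato-type iteration combining Lemma \ref{wve} with the weighted oscillation decay of Proposition \ref{boe} for $v/\ds$ near the boundary, and with interior regularity far from it. Set $K=\|u/\ds\|_{L^\infty(\Omega)}^p+\|f\|_{L^q(\Omega)}^{p'}$. This quantity is finite because $f\in L^\infty(\Omega)$ and Theorem \ref{bhr} \ref{bhr2} applies. Since trivially $\sigma(u/\ds,x_0,r)\le C\|u/\ds\|_{L^\infty}^p r^N$, only small radii $r<r_0$ matter, with $r_0$ depending on the data. We fix $\theta\in(0,1)$ to be chosen later and distinguish two regimes according to whether $\dd_\Omega(x_0)$ is large or small compared with $r^\theta$.

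\emph{Boundary regime: $\dd_\Omega(x_0)\le r^\theta$.} Pick $\bar x\in\partial\Omega$ with $|x_0-\bar x|=\dd_\Omega(x_0)$ and set $R=2r^\theta$, so that $D_r(x_0)\subseteq D_R(\bar x)$ for small $r$ and $\dd_\Omega(\bar x)=0\le 2R$. Let $v$ be the $(s,p)$-harmonic extension of $u$ in $D_R(\bar x)$. By Lemma \ref{hwe}, $|v|\le C\|u/\ds\|_{L^\infty}\ds$ in $D_R(\bar x)$, so Proposition \ref{boe} applies and gives
\[\underset{D_r(\bar x)}{\rm osc}\,\frac{v}{\ds}\le C\Big\|\frac{u}{\ds}\Big\|_{L^\infty(\Omega)}\Big(\frac{r}{R}\Big)^\beta .\]
Using $D_r(x_0)\subseteq D_{2R}(\bar x)$, possibly after slightly enlarging radii, we get $\sigma(v/\ds,x_0,r)\le C\|u/\ds\|_\infty^p r^N (r/R)^{p\beta}$. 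Inserting this into Lemma \ref{wve} yields
\[\sigma\Big(\frac{u}{\ds},x_0,r\Big)\le CK\big(r^{N+p\beta(1-\theta)}+r^{\theta(N+p\gamma)}\big).\]
The second exponent is at least $N+p\alpha$ for some $\alpha>0$ provided $\theta(N+p\gamma)>N$, i.e.\ $\theta>N/(N+p\gamma)$. Any $\theta\in(N/(N+p\gamma),1)$ then gives a positive $\alpha$, and here lies the key tension: $\theta$ must be close to $1$ to beat the $R^{N+p\gamma}$ error, yet strictly less than $1$ so that Proposition \ref{boe} produces genuine decay.

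\emph{Interior regime: $\dd_\Omega(x_0)>r^\theta$.} Set $d=\dd_\Omega(x_0)$, so $r<d^{1/\theta}\ll d$ and $B_r(x_0)\subset B_{d/2}(x_0)\subset\Omega$. On $B_{d/2}(x_0)$ the function $\ds$ is bounded below by $c\,d^s$ and has Lipschitz constant at most $Cd^{s-1}$. Corollary \ref{chr}, applied with $H_0(u)\le C\|u/\ds\|_\infty$ and with $\|f\|_{L^q}$, gives $[u]_{C^\delta(B_{d/4}(x_0))}\le C d^{-\delta}\big(\|u/\ds\|_\infty+\|f\|_{L^q}^{1/(p-1)}\big)$ for some $\delta\in(0,\bar\alpha)$. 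The quotient rule then gives
\[\underset{B_r(x_0)}{\rm osc}\,\frac{u}{\ds}\le C K^{1/p}\big(d^{-s-\delta}r^\delta+d^{-1}r\big)\le CK^{1/p}\,r^{\delta-(s+\delta)\theta}.\]
Here we used $d>r^\theta$ together with the observation that the $d^{-1}r$ term is dominated by the first. For $\theta$ small this exponent is positive, which conflicts with the boundary regime, where $\theta$ must be close to $1$.

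We expect this conflict between the two regimes to be the main obstacle. To resolve it, we will not use $\dd_\Omega(x_0)\le 2R$ with $\bar x\in\partial\Omega$ alone in the intermediate range, but apply Lemma \ref{wve} with center $\bar x=x_0$ itself whenever $\dd_\Omega(x_0)\le 2R$, taking $R=r^\theta$. In this case the extension $v$ is $(s,p)$-harmonic in the ball $B_R(x_0)$ when $\dd_\Omega(x_0)>R$, so interior Hölder estimates, in the form of Theorem \ref{lhr} with zero right-hand side applied to $v$, combined with the weighted bound of Lemma \ref{hwe}, control $\sigma(v/\ds,x_0,r)\le CK(r/R)^{p\delta'}r^N$ for some $\delta'>0$. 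This yields the same two-term structure as in the boundary regime, $r^{N+p\delta'(1-\theta)}+r^{\theta(N+p\gamma)}$, and the interior Corollary \ref{chr} argument is then needed only when $d\ge\tilde\rho/2$, where everything is uniform. Taking $\theta\in(N/(N+p\gamma),1)$ and
\[\alpha=\min\Big\{\min\{\beta,\delta'\}(1-\theta),\ \frac{\theta(N+p\gamma)-N}{p}\Big\}\]
then concludes the proof.
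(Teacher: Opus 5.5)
Your boundary regime rests on a step that fails. Proposition \ref{boe} gives decay of the oscillation of $v/\ds$ only on sets $D_\rho(\bar x)$ centred at the boundary point. With $d=\dd_\Omega(x_0)$, the inclusion actually available is $D_r(x_0)\subseteq D_{r+d}(\bar x)$, not $D_r(x_0)\subseteq D_{Cr}(\bar x)$, and the inclusion $D_r(x_0)\subseteq D_{2R}(\bar x)$ that you invoke yields no decay at all. What you really get is $\sigma(v/\ds,x_0,r)\le C\|u/\ds\|_{L^\infty(\Omega)}^p\big((r+d)/R\big)^{p\beta}r^N$. With $R=2r^\theta$ and $d$ allowed up to $r^\theta$, the ratio $(r+d)/R$ is of order $1$, so the term $r^{N+p\beta(1-\theta)}$ is unjustified.

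The repair in your last paragraph does not close this gap. Recentring Lemma \ref{wve} at $x_0$ legitimately handles $R<d\le 2R$. For $d\le R$ with $d$ comparable to $R$, however, you are back to the same non-decaying bound, and for $2r^\theta<d<\tilde\rho/2$ no argument is given. In that range you could only use your interior bound. Its factor $d^{-s-\delta}r^\delta$, which is what Corollary \ref{chr} with the global $H_0(u)=\|u\|_{L^\infty(\Omega)}$ produces, forces $\theta<\delta/(s+\delta)$. Since $p\gamma\delta<s\le Ns$, this is incompatible with $\theta>N/(N+p\gamma)$. So the conflict you identified is genuine and remains unresolved.

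Two ingredients are missing, and both appear in the paper's proof.

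First, the threshold separating the regimes and the replacement radius in the boundary regime must sit at different powers of $r$. For $d\ge 4r^\theta$ one works in the interior with $R=2r^\theta\le d/2$. For $d<4r^\theta$ one takes the harmonic extension in $D_{3R}(\bar x)$ with $R=2r^\eta$ and $\eta<\theta$, so that $(r+d)/R\le Cr^{\theta-\eta}$. Then Proposition \ref{boe} and Lemma \ref{hwe} give the term $r^{N+p\beta(\theta-\eta)}$, and Lemma \ref{wve} gives $r^{\eta(N+p\gamma)}$. Equating these with the interior exponent $N+p\delta(1-\theta)$ yields admissible $0<\eta<\theta<1$.

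Second, the interior estimate must be uniform in $d$, i.e.\ $\sigma(u/\ds,x_0,r)\le CK(r/R)^{p\delta}r^N$ for $2r<R\le d/2$, without your $d^{-s}$ loss. Apply Theorem \ref{lhr} on $B_{2R}(x_0)\subseteq\Omega$ with the local bounds $\|u\|_{L^\infty(B_R(x_0))}+{\rm Tail}(u,x_0,R)\le C\|u/\ds\|_{L^\infty(\Omega)}d^s$. These follow from $|u(x)|\le\|u/\ds\|_{L^\infty(\Omega)}(d+|x-x_0|)^s$. After dividing by $d^s$, use $p'(s-N/(pq))>s$ to absorb the forcing term. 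This is the same localization you already propose for $v$, applied to $u$ instead.

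(The paper states this bound in \eqref{nov1} via Corollary \ref{chr} with $\alpha=0$. However, its passage from $\|u\|_{L^\infty(\Omega)}/\ds(x_0)$ to $\|u/\ds\|_{L^\infty(\Omega)}$ is only valid after exactly this localization.)

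With these two changes, the recentring at $x_0$ is no longer needed.
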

\begin{proof}
First note that, due to $f\in L^\infty(\Omega)$ and Theorem \ref{bhr} \ref{bhr2}, all norms on the right hand side are finite. Let $\bar\rho$, $\tilde\rho$ be as in Lemmas \ref{geo}, \ref{hwe}, respectively. Without loss of generality we may assume $0<\tilde\rho\le\bar\rho<1$, and fix $0<R_0<\tilde\rho/4$ depending on the data, to be determined later. Now fix $x_0\in\Omega$, $r>0$.
\vskip2pt
\noindent
If $r\ge R_0$, then as in the proof of Lemma \ref{wve} we have
\[\sigma\Big(\frac{u}{\ds},x_0,r\Big) \le C\Big\|\frac{u}{\ds}\Big\|_{L^\infty(D_r(x_0))}^p|D_r(x_0)|\le \frac{C}{R_0^{p\alpha}}\Big\|\frac{u}{\ds}\Big\|_{L^\infty(D_r(x_0))}^p r^{N+p\alpha},\]
for any $0<\alpha<1$, implying the claimed estimate.
\vskip2pt
\noindent
We will henceforth assume $0<r<R_0$ and introduce a further radius $0<R<R_0$. We distinguish two cases:
\begin{itemize}[leftmargin=1cm]
\item[$(a)$] If $ \dd_\Omega(x_0)\ge 2R$, then $D_{2R}(x_0)=B_{2R}(x_0)$ and we are led back to interior regularity theory. By Theorem \ref{lhr}, for any $0<\delta<\bar\alpha$ with $\bar\alpha$ defined by \eqref{exp} we have $u\in C^\delta(\overline{B}_{R/2}(x_0))$. Besides, by the Lipschitz continuity of $ \dd_\Omega$, for all $x\in B_{R/2}(x_0)$ we have
\[| \dd_\Omega(x)- \dd_\Omega(x_0)| \le |x-x_0| \le \frac{R}{2} \le \frac{ \dd_\Omega(x_0)}{4},\]
which implies
\[\frac{ \dd_\Omega(x_0)}{4} \le  \dd_\Omega(x) \le \frac{3 \dd_\Omega(x_0)}{4}.\]
So we have $ \dd_\Omega^{-s}\in C^\delta(\overline{B}_{R/2}(x_0))$ with
\[[ \dd_\Omega^{-s}]_{C^\delta(\overline{B}_{R/2}(x_0))} \le CR^{1-\delta}\sup_{B_{R/2}(x_0)}\,\frac{|\nabla \dd_\Omega|}{ \dd_\Omega^{1+s}} \le \frac{CR^{1-\delta}}{ \dd_\Omega^{1+s}(x_0)} \le \frac{C}{\ds(x_0)R^\delta}.\]
Combining the estimates above, we have
\begin{align*}
\left[\frac{u}{\ds}\right]_{C^\delta(\overline{B}_{R/2}(x_0))} &\le C[u]_{C^\delta(\overline{B}_{R/2}(x_0))}\| \dd_\Omega^{-s}\|_{L^\infty(B_{R/2}(x_0))}+C\|u\|_{L^\infty(B_{R/2}(x_0))}[ \dd_\Omega^{-s}]_{C^\delta(\overline{B}_{R/2}(x_0))} \\
&\le \frac{C}{\ds(x_0)}[u]_{C^\delta(\overline{B}_{R/2}(x_0))}+\frac{C}{R^\delta\ds(x_0)}\|u\|_{L^\infty(B_{R/2}(x_0))} \\
&\le \frac{C}{\ds(x_0)}[u]_{C^\delta(\overline{B}_{R/2}(x_0))}+\frac{C}{R^\delta}\Big\|\frac{u}{\ds}\Big\|_{L^\infty(B_{R/2}(x_0))}.
\end{align*}
To estimate the first term, we apply Corollary \ref{chr} with $\alpha=0$ (that is, $H_0(u)=\|u\|_{L^\infty(\Omega)}$ by \eqref{hal}), then again the two-sided pointwise bound on $ \dd_\Omega$:
\begin{align*}
\frac{[u]_{C^\delta(\overline{B}_{R/2}(x_0))}}{\ds(x_0)} &\le \frac{C}{R^\delta}\left[\frac{\|u\|_{L^\infty(\Omega)}}{\ds(x_0)}+\frac{R^{p'(s-\frac{N}{pq})}}{\ds(x_0)}\|f\|_{L^q(B_R(x_0))}^\frac{1}{p-1}\right] \\
&\le \frac{C}{R^\delta}\left[\Big\|\frac{u}{\ds}\Big\|_{L^\infty(\Omega)}+\Big(\frac{R}{\ds(x_0)}\Big)^s\|f\|_{L^q(B_R(x_0))}^\frac{1}{p-1}\right] \\
&\le \frac{C}{R^\delta}\left[\Big\|\frac{u}{\ds}\Big\|_{L^\infty(\Omega)}+\|f\|_{L^q(B_R(x_0))}^\frac{1}{p-1}\right],
\end{align*}
where we have used that $R<1$ and
\[p'\Big(s-\frac{N}{pq}\Big) > s.\]
Connecting with the inequality above, we get
\[\left[\frac{u}{\ds}\right]_{C^\delta(\overline{B}_{R/2}(x_0))} \le \frac{C}{R^\delta}\left[\Big\|\frac{u}{\ds}\Big\|_{L^\infty(\Omega)}+\|f\|_{L^q(\Omega)}^\frac{1}{p-1}\right].\]
Passing to the variance, we have for all $0<r<R/2$
\begin{align}\label{nov1}
\sigma\Big(\frac{u}{\ds},x_0,r\Big) &\le C\left[\frac{u}{\ds}\right]_{C^\delta(\overline{B}_{R/2}(x_0))}^pr^{N+p\delta} \\
\nonumber &\le C\left[\Big\|\frac{u}{\ds}\Big\|_{L^\infty(\Omega)}^p+\|f\|_{L^q(\Omega)}^{p'}\right]\Big(\frac{r}{R}\Big)^{p\delta}r^N.
\end{align}
\item[$(b)$] If $ \dd_\Omega(x_0)<2R$, then we exploit boundary regularity theory and $(s, p)$-harmonic extensions. Fix $\bar x\in\partial\Omega$ s.t.\ $|x_0-\bar x|= \dd_\Omega(x_0)$, $0<r<R/2$, so for all $x\in D_r(x_0)$
\[|x-\bar x| \le |x-x_0|+|x_0-\bar x| < r+ \dd_\Omega(x_0) \le 3R,\]
i.e., $D_r(x_0)\subseteq D_{r+ \dd_\Omega(x_0)}(x_0)\subseteq D_{3R}(\bar x)$. Let $v\in\w$ solve
\[\begin{cases}
\fpl v = 0 & \text{in $D_{3R}(\bar x)$} \\
v = u & \text{in $D_{3R}^c(\bar x)$.}
\end{cases}\]
First, we concatenate Proposition \ref{boe} and Lemma \ref{hwe} along with some domain inclusions to get, for some $0<\beta<1$ depending on the data,
\[
\underset{D_r(x_0)}{\rm osc}\,\frac{v}{\ds} \le C\Big\|\frac{v}{\ds}\Big\|_{L^\infty(D_{3R}(x_0))}\Big(\frac{r+ \dd_\Omega(x_0)}{3R}\Big)^\beta \le C\Big\|\frac{u}{\ds}\Big\|_{L^\infty(\Omega)}\Big(\frac{r+ \dd_\Omega(x_0)}{R}\Big)^\beta.
\]
As usual, we bound the variance via the oscillation:
\begin{align*}
\sigma\Big(\frac{v}{\ds},x_0,r\Big) &\le C\Big(\underset{D_r(x_0)}{\rm osc}\,\frac{v}{\ds}\Big)^p|D_r(x_0)| \\
&\le C\Big\|\frac{u}{\ds}\Big\|_{L^\infty(\Omega)}^p\Big(\frac{r+ \dd_\Omega(x_0)}{R}\Big)^{p\beta}r^N.
\end{align*}
Next we apply Lemma \ref{wve} (with center $\bar x$ and radius $3R$, up to a constant rescaling) and the previous variance estimate on $v$, to find for all $0<r<R/2$
\begin{align}\label{nov2}
\sigma\Big(\frac{u}{\ds},x_0,r\Big) &\le C\sigma\Big(\frac{v}{\ds},x_0,r\Big)+C\left[\Big\|\frac{u}{\ds}\Big\|_{L^\infty(\Omega)}^p+\|f\|_{L^q(\Omega)}^{p'}\right]R^{N+p\gamma} \\
\nonumber &\le C\Big\|\frac{u}{\ds}\Big\|_{L^\infty(\Omega)}^p\Big(\frac{r+ \dd_\Omega(x_0)}{R}\Big)^{p\beta}r^N+C\left[\Big\|\frac{u}{\ds}\Big\|_{L^\infty(\Omega)}^p+\|f\|_{L^q(\Omega)}^{p'}\right]R^{N+p\gamma}.
\end{align}
\end{itemize}
We need to merge \eqref{nov1} and \eqref{nov2} into one estimate, holding in all cases. To do so, we will chose $R>2r$ according to different situations. Fix $0<\eta<\theta\le 1$ to be determined later, and let $x_0\in\Omega$, $0<r<1$. Again we distinguish two cases:
\begin{itemize}[leftmargin=1cm]
\item[$(a')$] If $ \dd_\Omega(x_0)\ge 4r^\theta$, then we set $R=2r^\theta$ so that
\[0 < r < r^\theta = \frac{R}{2} \le \frac{ \dd_\Omega(x_0)}{4},\]
and we are in case $(a)$ of the previous alternative. So, by \eqref{nov1} we have
\beq\label{nov3}
\sigma\Big(\frac{u}{\ds},x_0,r\Big) \le C\left[\Big\|\frac{u}{\ds}\Big\|_{L^\infty(\Omega)}^p+\|f\|_{L^q(\Omega)}^{p'}\right]r^{N+p\delta(1-\theta)},
\eeq
with $0<\delta<\bar\alpha$ and $\bar\alpha$ given by \eqref{exp}.
\item[$(b')$] If $ \dd_\Omega(x_0)<4r^\theta$, then we set $R=2r^\eta$, so that $0<r<R/2$, $ \dd_\Omega(x_0)< 2R$, and without loss of generality we may assume $R<\tilde\rho/4$. We are then reduced to case $(b)$ above, hence from \eqref{nov2} and the relations between radii we have
\beq\label{nov4}
\sigma\Big(\frac{u}{\ds},x_0,r\Big) \le C\Big\|\frac{u}{\ds}\Big\|_{L^\infty(\Omega)}^p r^{N+p\beta(\theta-\eta)}+C\left[\Big\|\frac{u}{\ds}\Big\|_{L^\infty(\Omega)}^p+\|f\|_{L^q(\Omega)}^{p'}\right]r^{\eta(N+p\gamma)},
\eeq
with $0<\beta<1$ given by Proposition \ref{boe} and $0<\gamma<1$ given by Lemma \ref{wve}.
\end{itemize}
In order to gather all alternatives, we seek $\eta$, $\theta$ s.t.\ all powers of $r$ in \eqref{nov3}, \eqref{nov4}, respectively, agree, that is,
\[N+p\delta(1-\theta) = N+p\beta(\theta-\eta) = \eta(N+p\gamma).\]
From the equalities above, through a straightforward computation we infer
\[\eta = \frac{N(\beta+\delta)+p\beta\delta}{(N+p\gamma)(\beta+\delta)+p\beta\delta}, \ \theta = \frac{(N+p\gamma)\delta+(N+p\delta)\beta}{(N+p\gamma)(\beta+\delta)+p\beta\delta}.\]
Let us check that such choice is admissible. From $\gamma>0$ we clearly have $0<\eta<1$, which in turn implies
\[\theta = \frac{\beta\eta+\delta}{\beta+\delta} \in (\eta,1).\]
Therefore, set $\alpha=\delta(1-\theta)\in(0,1)$ and take $C>0$ as the biggest of all $C'$s in \eqref{nov3}, \eqref{nov4} (both only depend on the data and $q$). So we have for all $x_0\in\Omega$ and $0<r<1$
\[\sigma\Big(\frac{u}{\ds},x_0,r\Big) \le C\left[\Big\|\frac{u}{\ds}\Big\|_{L^\infty(\Omega)}^p+\|f\|_{L^q(\Omega)}^{p'}\right]r^{N+p\alpha},\]
which concludes the proof.
\end{proof}

\noindent
We already know that $u/\ds\in C^\alpha(\overline\Omega)$ by the assumed boundedness of the reaction, but the inequality in the previous lemma solely involves the $L^q(\Omega)$ norm of $f$. In order to  transfer the regularity to the general case $f\in L^q(\Omega)$,  it remains to remove the dependance on $\|u/\ds\|_\infty$ in the variance estimate, which is the purpose of the next lemma:

\begin{lemma}\label{nou}
Let $f\in L^\infty(\Omega)$, $u\in\w$ be the solution of \eqref{dir}, $q>N/s$. Then:
\begin{enumroman}
\item\label{nou1} there exists $C>0$ depending on the data and $q$, s.t.\
\[\Big\|\frac{u}{\ds}\Big\|_{L^\infty(\Omega)} \le C\|f\|_{L^q(\Omega)}^\frac{1}{p-1};\]
\item\label{nou2} there exist $0<\alpha<1$, $C>0$ depending on the data and $q$, s.t.\ for all $x_0\in\Omega$, $r>0$
\[\sigma\Big(\frac{u}{\ds},x_0,r\Big) \le C\|f\|_{L^q(\Omega)}^{p'}r^{N+p\alpha}.\]
\end{enumroman}
\end{lemma}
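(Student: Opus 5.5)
Part \ref{nou2} follows at once from \ref{nou1} and Lemma \ref{nov}. Substituting the bound of \ref{nou1} into the right-hand side of Lemma \ref{nov} gives
\[\Big\|\frac{u}{\ds}\Big\|_{L^\infty(\Omega)}^p\le C\|f\|_{L^q(\Omega)}^{p'},\]
since $p/(p-1)=p'$. So the real task is the $L^\infty$ bound \ref{nou1}. I would prove it by contradiction, using compactness and the homogeneity of $\fpl$. The qualitative finiteness of $\|u/\ds\|_{L^\infty(\Omega)}$ is already granted by Theorem \ref{bhr} \ref{bhr2}, because $f\in L^\infty(\Omega)$.

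Suppose \ref{nou1} fails. Then there are $f_n\in L^\infty(\Omega)$ with solutions $u_n$ such that
\[\Big\|\frac{u_n}{\ds}\Big\|_{L^\infty(\Omega)}> n\|f_n\|_{L^q(\Omega)}^\frac{1}{p-1}.\]
By $(p-1)$-homogeneity of the operator, set $\lambda_n=\|u_n/\ds\|_{L^\infty(\Omega)}$, $w_n=u_n/\lambda_n$ and $g_n=f_n/\lambda_n^{p-1}$. Then $w_n$ solves \eqref{dir} with reaction $g_n\in L^\infty(\Omega)$, and
\[\Big\|\frac{w_n}{\ds}\Big\|_{L^\infty(\Omega)}=1,\qquad \|g_n\|_{L^q(\Omega)}<n^{1-p}\to0.\]
Lemma \ref{nov} now gives a uniform bound on the variances $\sigma(w_n/\ds,x_0,r)\le Cr^{N+p\alpha}$, with $C$ and $\alpha$ independent of $n$. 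By Theorem \ref{cam}, which applies thanks to Lemma \ref{geo}, the family $w_n/\ds$ is bounded in $C^\alpha(\overline\Omega)$. By Ascoli--Arzel\`a, up to a subsequence $w_n/\ds\to\phi$ uniformly on $\overline\Omega$, and $\|\phi\|_{L^\infty(\Omega)}=1$.

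On the other hand, testing the equation of $w_n$ with $w_n$ gives
\[[w_n]_{s,p}^p=\int_\Omega g_nw_n\,dx\le\|g_n\|_{L^1(\Omega)}\|w_n\|_{L^\infty(\Omega)}\le C\|g_n\|_{L^q(\Omega)}\to0,\]
using $|w_n|\le\ds\le C$. Hence $w_n\to0$ in $\w$, and so in $L^p(\Omega)$. But $w_n=\ds\,(w_n/\ds)\to\ds\phi$ uniformly. Therefore $\ds\phi=0$, which forces $\phi=0$ in $\Omega$ and, by continuity, on $\overline\Omega$. This contradicts $\|\phi\|_{L^\infty}=1$.

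The delicate point is that the uniform limit must keep its sup norm equal to $1$. The supremum could, in principle, be attained only near $\partial\Omega$. This is exactly why I use Hölder equicontinuity on $\overline\Omega$ via Campanato, rather than interior estimates: uniform convergence on the closure keeps $\|\phi\|_{L^\infty(\overline\Omega)}=1$. We also need the constant in Lemma \ref{nov} not to depend on $\|f\|_\infty$, which holds as stated. Then \ref{nou1} holds, and \ref{nou2} follows from it as explained at the start.
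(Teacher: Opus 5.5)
Your proof is correct. Part \ref{nou2} is obtained exactly as in the paper, but your route to \ref{nou1} is genuinely different.

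The paper argues directly and quantitatively. Lemma \ref{nov} together with Theorem \ref{cam} gives $[u/\ds]_{C^\alpha(\overline\Omega)}^p\le C\big[\|u/\ds\|_{L^\infty(\Omega)}^p+\|f\|_{L^q(\Omega)}^{p'}\big]$. Then, for each $x_0\in\Omega$ and small $\eps$, the paper bounds $|u(x_0)/\ds(x_0)|^p$ by $C[u/\ds]^p_{C^\alpha}\eps^{p\alpha}$ plus the average of $|u/\ds|^p$ over $D_\eps(x_0)$. That average is controlled by $\eps^{-N}$ times the fractional Hardy inequality (Theorem \ref{fhi}) combined with the energy bound $[u]_{s,p}\le C\|f\|_{L^q(\Omega)}^{1/(p-1)}$. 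Finally, the term $\eps^{p\alpha}\|u/\ds\|_{L^\infty(\Omega)}^p$ is reabsorbed by choosing $\eps$ small.

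You use the same two ingredients: the uniform Campanato bound coming from Lemma \ref{nov}, and the energy estimate obtained by testing with the solution. In place of the $\eps$-absorption you use normalization by homogeneity and Ascoli--Arzel\`a. In place of Hardy's inequality you use the purely qualitative fact that $\ds\phi=0$ forces $\phi=0$, since $\ds>0$ in $\Omega$ and $\phi$ is continuous. Your checks are all sound:
\begin{itemize}
\item $\lambda_n>0$ automatically, because otherwise the strict inequality fails.
\item $\lambda_n<\infty$ by Theorem \ref{bhr}.
\item The constants in Lemma \ref{nov} do not depend on $\|f\|_{L^\infty(\Omega)}$.
\item Uniform convergence on $\overline\Omega$ preserves the sup norm.
\end{itemize}

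Your argument thus dispenses with Theorem \ref{fhi} and the interpolation step, at the cost of a non-explicit constant produced by contradiction and an additional compactness step. The paper's argument yields a constant that is explicit in terms of those of Lemma \ref{nov}, Theorem \ref{cam}, and the Hardy and Sobolev inequalities.
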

\begin{proof}
As already noted in Section \ref{sec2}, from $q>N/s$ we have $q'<\p$, hence $\w\hookrightarrow L^{q'}(\Omega)$. Testing \eqref{dir} with $u\in\w$ and using H\"older's and Sobolev's inequalities, we have
\[
[u]_{s,p}^p = \langle\fpl u,u\rangle = \int_\Omega fu\,dx \le \|f\|_{L^q(\Omega)}\|u\|_{L^{q'}(\Omega)} \le C\|f\|_{L^q(\Omega)}[u]_{s,p},
\]
hence
\beq\label{nou3}
[u]_{s,p} \le C\|f\|_{L^q(\Omega)}^\frac{1}{p-1}.
\eeq
By regularity of $\partial\Omega$, for all $x_0\in\Omega$ and $r>0$ we have $|D_r(x_0)|\ge\mu r^N$, with $\mu>0$ depending on $\Omega$. So, we can apply Theorem \ref{cam} to $u/\ds$ in $\Omega$, which, combined with Lemma \ref{nov}, yields for some $0<\alpha<1$, $C>0$ depending on the data
\beq\label{nou4}
\left[\frac{u}{\ds}\right]_{C^\alpha(\overline\Omega)}^p \le C\left[\Big\|\frac{u}{\ds}\Big\|_{L^\infty(\Omega)}^p+\|f\|_{L^q(\Omega)}^{p'}\right].
\eeq
Let us prove \ref{nou1}. Fix $x_0\in\Omega$, $0<\eps<1$. By \eqref{nou4}, for all $x\in D_\eps(x_0)$ we have
\begin{align*}
\Big|\frac{u(x_0)}{\ds(x_0)}\Big|^p &\le C\Big|\frac{u(x_0)}{\ds(x_0)}-\frac{u(x)}{\ds(x)}\Big|^p+C\Big|\frac{u(x)}{\ds(x)}\Big|^p \\
&\le C\left[\frac{u}{\ds}\right]_{C^\alpha(\overline\Omega)}^p\eps^{p\alpha}+C\Big|\frac{u(x)}{\ds(x)}\Big|^p \\
&\le C\left[\Big\|\frac{u}{\ds}\Big\|_{L^\infty(\Omega)}^p+\|f\|_{L^q(\Omega)}^{p'}\right]\eps^{p\alpha}+C\Big|\frac{u(x)}{\ds(x)}\Big|^p.
\end{align*}
We estimate the last term by passing to the mean value in $D_\eps(x_0)$. Being $\Omega$ bounded, we have $R_\Omega<\infty$, while by Lemma \ref{geo} $\Omega$ satisfies ${\rm EBC}(\bar\rho)$, so we can apply Theorem \ref{fhi}:
\[
\fint_{D_\eps(x_0)}\Big|\frac{u}{\ds}\Big|^p\,dx \le \frac{C}{\eps^N}\int_\Omega\Big|\frac{u}{\ds}\Big|^p\,dx \le \frac{C}{\eps^N}[u]_{s,p}^p \le \frac{C}{\eps^N}\|f\|_{L^q(\Omega)}^{p'},
\]
where in the last passage we have used \eqref{nou3}. Next, take the supremum with respect to $x_0\in\Omega$:
\[\Big\|\frac{u}{\ds}\Big\|_{L^\infty(\Omega)}^p \le C\Big\|\frac{u}{\ds}\Big\|_{L^\infty(\Omega)}^p\eps^{p\alpha}+C\|f\|_{L^q(\Omega)}^{p'}\Big(\eps^{p\alpha}+\frac{1}{\eps^N}\Big).\]
Choosing $\eps$ small enough, we reabsorb the first term on the right hand side and achieve \ref{nou1}.
Finally, \ref{nou2} follows immediately from \eqref{nou4} and \ref{nou1}.
\end{proof}

\noindent
We can now prove our fine boundary regularity result.
\vskip4pt
\noindent
{\em Proof of Theorem \ref{fbr}.} Since $f\in L^q(\Omega)$ with $q>N/s$, by a standard truncation argument we can find a sequence $(f_n)$ in $L^\infty(\Omega)$ s.t.\ $f_n\to f$ in $L^q(\Omega)$ and
\[\|f_n\|_{L^q(\Omega)} \le \|f\|_{L^q(\Omega)}.\]
For all $n\in\N$ there exist a unique $u_n\in\w$ solving
\beq\label{fbr1}
\begin{cases}
\fpl u_n = f_n & \text{in $\Omega$} \\
u_n = 0 & \text{in $\Omega^c$.}
\end{cases}
\eeq
By Theorem \ref{bhr} \ref{bhr2}, there exists $0<\alpha\le s$ independent of $n$, s.t.\ $u_n/\ds\in C^\alpha(\overline\Omega)$ (up to extension to $\overline\Omega$). Besides, by Lemma \ref{nou} \ref{nou2}, up to taking $\alpha$ even smaller, we can find $C>0$ independent of $n$ s.t.\ for all $x_0\in\Omega$, $r>0$
\[\sigma\Big(\frac{u_n}{\ds},x_0,r\Big) \le C\|f_n\|_{L^q(\Omega)}^{p'}r^{N+p\alpha} \le C\|f\|_{L^q(\Omega)}^{p'}r^{N+p\alpha}.\]
We apply Theorem \ref{cam} (recalling the geometrical observation made above), so that the qualitative information provided by Theorem \ref{bhr} is coupled with the following uniform estimate:
\beq\label{fbr2}
\left[\frac{u_n}{\ds}\right]_{C^\alpha(\overline\Omega)} \le C\|f\|_{L^q(\Omega)}^\frac{1}{p-1}.
\eeq
There remain to pass to the limit in \eqref{fbr2} as $n\to\infty$. With this aim in mind, as in the proof of Lemma \ref{nou} we test \eqref{fbr1} with $u_n$ and find
\[[u_n]_{s,p} \le C\|f_n\|_{L^q(\Omega)}^\frac{1}{p-1} \le C\|f\|_{L^q(\Omega)}^\frac{1}{p-1}.\]
The sequence $(u_n)$ is thus bounded in $\w$, hence by reflexivity we can pass to a subsequence s.t.\ $u_n\rightharpoonup u$ in $\w$. Due to the compact embedding $\w\hookrightarrow L^{q'}(\Omega)$, passing to a further subsequence we have $u_n\to u$ in $L^{q'}(\Omega)$ and $u_n(x)\to u(x)$ for a.e.\ $x\in\Omega$. Test again \eqref{fbr1}, this time with $u_n-u\in\w$:
\begin{align*}
\langle\fpl u_n,u_n-u\rangle &= \int_\Omega f_n(u_n-u)\,dx \\
&\le \|f_n\|_{L^q(\Omega)}\|u_n-u\|_{L^{q'}(\Omega)} \le \|f\|_{L^q(\Omega)}\|u_n-u\|_{L^{q'}(\Omega)},
\end{align*}
and the latter tends to $0$ as $n\to\infty$. Hence,
\[\limsup_n\,\langle\fpl u_n,u_n-u\rangle \le 0.\]
By the $(S)_+$-property of $\fpl$ (see \cite[Lemma 5.1]{FI}), we have $u_n\to u$ in $\w$. We can then pass to the limit in \eqref{fbr1} and see that $u\in\w$ coincides with the unique solution of \eqref{dir}.
\vskip2pt
\noindent
By \eqref{fbr2}, the sequence $(u_n/\ds)$ is bounded in $C^\alpha(\overline\Omega)$. By the Ascoli-Arzel\`a theorem, passing to a subsequence we have $u_n/\ds\to u/\ds$ uniformly in $\overline\Omega$ (by uniqueness), hence we infer $u/\ds\in C^\alpha(\overline\Omega)$ and
\[\left[\frac{u}{\ds}\right]_{C^\alpha(\overline\Omega)} \le C\|f\|_{L^q(\Omega)}^\frac{1}{p-1}.\]
In addition, by Lemma \ref{nou} \ref{nou1} we have for all $n\in\N$
\[\Big\|\frac{u_n}{\ds}\Big\|_{L^\infty(\Omega)} \le C\|f_n\|_{L^q(\Omega)}^\frac{1}{p-1},\]
with $C>0$ independent of $n$. Passing to the limit by uniform convergence, and recalling that $f_n\to f$ in $L^q(\Omega)$, we find
\[\Big\|\frac{u}{\ds}\Big\|_{L^\infty(\Omega)} \le C\|f\|_{L^q(\Omega)}^\frac{1}{p-1}.\]
Thus, for some $C_\alpha>0$ depending on the data and $q$, we have
\[\Big\|\frac{u}{\ds}\Big\|_{C^\alpha(\overline\Omega)} \le C_\alpha\|f\|_{L^q(\Omega)}^\frac{1}{p-1},\]
which concludes the proof. \qed

\section{Global H\"older regularity}\label{sec5}

\noindent
This final section is devoted to the proof of Theorem \ref{hol}. The case $q>N/s$ will follow from Theorem \ref{fbr} and Lemma \ref{rsb}, while for the case $N/ps<q\le N/s$ we need to repeat the variance estimates of Section \ref{sec3}, with appropriate adaptations. Note that, in this framework, we cannot rely on the case of bounded reactions.
\vskip2pt
\noindent
We begin with a first estimate, analogous to Lemma \ref{wve}, but with a different exponent $\gamma$:

\begin{lemma}\label{qve}
Let $f\in L^q(\Omega)$ with $q\ge 1$, $q>N/ps$, $u\in\w$ be the solution of \eqref{dir}, $0<\gamma<1$ be defined as
\[\gamma = \min\Big\{1,\,\frac{p}{2}\Big\}\Big(s-\frac{N}{pq}\Big).\]
Also, let $\bar x\in\partial\Omega$, $R>0$, and $v\in\w$ be the $(s, p)$-harmonic extension of $u$ in $D_R(\bar x)$. Then, there exists $C>0$ depending on the data and $q$, s.t.\ for all $x_0\in\Omega$, $0<r<R$ with $D_r(x_0)\subseteq D_R(\bar x)$
\begin{enumroman}
\item\label{qve1} if $p\ge 2$, then
\[\sigma(u,x_0,r) \le C\sigma(v,x_0,r)+C\|u-v\|_{L^\infty(D_R(\bar x))}\|f\|_{L^q(\Omega)}R^{N+p\gamma};\]
\item\label{qve2} if $p<2$, then
\[\sigma(u,x_0,r) \le C\sigma(v,x_0,r)+C\big[\|u\|_{L^\infty(D_R(\bar x))}+\|v\|_{L^\infty(D_R(\bar x))}\big]^\frac{p(3-p)}{2}\|f\|_{L^q(\Omega)}^\frac{p}{2}R^{N+p\gamma}.\]
\end{enumroman}
\end{lemma}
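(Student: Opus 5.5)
Start from \eqref{propvar} applied with $u$, $v$ on $D_r(x_0)$:
\[\sigma(u,x_0,r) \le C\sigma(v,x_0,r)+C\int_{D_r(x_0)}|u-v|^p\,dx,\]
so the task is to bound $\int_{D_R(\bar x)}|u-v|^p\,dx$ by the perturbative terms. The key identity is the one already used in Lemma \ref{wve}: since $u-v\in\w$ vanishes outside $D_R(\bar x)$ and $v$ is $(s,p)$-harmonic there, testing \eqref{dir} and the equation of $v$ with $u-v$ gives
\[\langle\fpl u-\fpl v,u-v\rangle = \int_{D_R(\bar x)}f(u-v)\,dx \le \|u-v\|_{L^\infty(D_R(\bar x))}\|f\|_{L^q(\Omega)}|D_R(\bar x)|^{1-\frac1q}\le C\|u-v\|_{L^\infty(D_R(\bar x))}\|f\|_{L^q(\Omega)}R^{N-\frac Nq}.\]

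For $p\ge 2$ I would combine Lemma \ref{mod} with a Poincar\'e-type bound for functions in $W^{s,p}_0(D_R(\bar x))$: by Remark \ref{ecb}, $D_R(\bar x)$ satisfies ${\rm EBC}(\rho)$ with $\rho=\min\{\bar\rho,R\}$ (the ball itself is convex and $\Omega$ has ${\rm EBC}(\bar\rho)$), and its inradius is at most $R$, so Theorem \ref{fhi} applied in $D_R(\bar x)$ yields, with a constant independent of $R$ (for $R\le\bar\rho$; larger $R$ handled by bounded diameter),
\[\int_{D_R(\bar x)}|u-v|^p\,dx \le R^{ps}\int_{D_R(\bar x)}\frac{|u-v|^p}{\dd_{D_R(\bar x)}^{ps}}\,dx\le CR^{ps}[u-v]_{s,p}^p\le CR^{ps}\langle\fpl u-\fpl v,u-v\rangle.\]
Plugging in the identity gives $C\|u-v\|_{L^\infty}\|f\|_{L^q}R^{N+ps-N/q}$, and $ps-N/q=p(s-N/(pq))=p\gamma$ when $p\ge2$, which is \ref{qve1}.

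For $p<2$ I would use Lemma \ref{mos}\ref{mos1} with the domain $D_R(\bar x)$ playing the role of $\Omega$ (again EBC and inradius $\le R$ make $C_{D_R}$ uniform) and $\Omega'=D_R(\bar x)$, so $\|\dd_{D_R}\|_\infty^{ps}\le R^{ps}$:
\[\int_{D_R(\bar x)}|u-v|^p\,dx\le \big[CR^{ps}\langle\fpl u-\fpl v,u-v\rangle\big]^{\frac p2}\big[\|u\|^p_{L^p(D_R)}+\|v\|^p_{L^p(D_R)}\big]^{\frac{2-p}{2}}.\]
Bound the last bracket by $C(\|u\|_\infty+\|v\|_\infty)^{p}R^N$ raised to $(2-p)/2$, and $\|u-v\|_\infty\le\|u\|_\infty+\|v\|_\infty$ in the energy term; collecting powers, the sup-norm exponent is $\frac p2+\frac{p(2-p)}2=\frac{p(3-p)}2$, the $f$ exponent is $\frac p2$, and the $R$ exponent is $\frac p2(ps+N-\frac Nq)+\frac{N(2-p)}2=N+\frac{p}{2}\,p(s-\frac{N}{pq})=N+p\gamma$, giving \ref{qve2}. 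The main point to check carefully is the uniformity of the Hardy/monotonicity constants on $D_R(\bar x)$ (scale invariance of ${\rm EBC}$ ratio $R_{D_R}/\rho$), which is where I expect the only real work; the case $R$ large compared with $\bar\rho$ is absorbed into constants depending on $\Omega$.
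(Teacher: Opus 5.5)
Your treatment of $p\ge 2$ is exactly the paper's argument: Hardy's inequality in $D_R(\bar x)$ for $u-v\in W^{s,p}_0(D_R(\bar x))$, then Lemma \ref{mod} and testing with $u-v$. Your exponent bookkeeping in both regimes is also correct. The case $p<2$, however, contains a step that fails as written. You apply Lemma \ref{mos}\ref{mos1} with $D_R(\bar x)$ as the ambient domain. That lemma requires both functions to vanish outside the ambient domain, and here only $u-v$ does: $u=v$ is in general nonzero on $\Omega\setminus D_R(\bar x)$. This is not a technicality. The proof of Lemma \ref{mos} bounds $\int_{\Omega'}(u^{p-1}-v^{p-1})(u-v)\,\dd^{-ps}\,dx$ by the portion of $\langle\fpl u-\fpl v,u-v\rangle$ over $\Omega'$ times the complement of the ambient domain. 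There the integrand reduces to $(u(x)^{p-1}-v(x)^{p-1})(u(x)-v(x))$ only because $u(y)=v(y)=0$. For $y\in\Omega\setminus D_R(\bar x)$ the integrand is instead $\big[(u(x)-u(y))^{p-1}-(v(x)-u(y))^{p-1}\big](u(x)-v(x))$. For $1<p<2$ the difference quotient of $t\mapsto t^{p-1}$ behaves like $(|a|+|b|)^{p-2}$, so this shifted quantity is much smaller than the unshifted one when $|u(y)|\gg|u(x)|+|v(x)|$. Hence the weight $\dd_{D_R(\bar x)}$ is simply not available. The concern you flag, uniformity of the constant on $D_R(\bar x)$, is not the real issue.

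The repair is what the paper does. Apply Lemma \ref{mos}\ref{mos1} in $\Omega$ itself, where $u,v\in\w$ do vanish outside, with $\Omega'=D_R(\bar x)$; the constant $C_\Omega$ then depends only on the data. The weight factor becomes $\|\dd_\Omega\|_{L^\infty(D_R(\bar x))}^{ps}$. This is at most $R^{ps}$ precisely because $\bar x\in\partial\Omega$ gives $\dd_\Omega(x)\le|x-\bar x|<R$ on $D_R(\bar x)$. That is exactly where the hypothesis $\bar x\in\partial\Omega$ enters the lemma (the $p\ge2$ argument does not need it), which your version obscures. With this change, the rest of your computation goes through verbatim: the $L^\infty$ bound on the $L^p$ norms, $\|u-v\|_\infty\le\|u\|_\infty+\|v\|_\infty$, and the exponents $\frac{p(3-p)}{2}$, $\frac p2$, $N+p\gamma$.
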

\begin{proof}
Without loss of generality, we may assume that all norms are finite. Fix $x_0\in\Omega$, $0<r<R$ s.t.\ $D_r(x_0)\subseteq D_R(\bar x)$ so that, by \eqref{propvar}, it suffices to estimate the quantity
\[\int_{D_R(\bar x)}|u-v|^p\,dx.\]
To this end we distinguish the degenerate and singular regimes, respectively:
\begin{itemize}[leftmargin=1cm]
\item[$(a)$] If $p\ge 2$, then we note that $D_R(\bar x)=\Omega\cap B_R(\bar x)$ satisfies ${\rm EBC}(\bar\rho)$ with $\bar\rho$ defined by Lemma \ref{geo} (see Remark \ref{ecb}). Note that $\dd_{D_R(\bar x)}\le R$ in all of $\R^N$, and apply Theorem \ref{fhi} to $(u-v)\in W^{s,p}_0(D_R(\bar x))$:
\[\int_{D_R(\bar x)}|u-v|^p\,dx \le R^{ps}\int_{D_R(\bar x)}\frac{|u(x)-v(x)|^p}{  \dd_{D_R(\bar x)}^{ps}(x)}\,dx \le CR^{ps}[u-v]_{s,p}^p.\]
To estimate the Gagliardo norm, we apply Lemma \ref{mod}, then test \eqref{dir} with $(u-v)$ and use H\"older's inequality:
\begin{align*}
[u-v]_{s,p}^p &\le C\langle\fpl u-\fpl v,u-v\rangle \\
&= C\int_{D_R(\bar x)}f(u-v)\,dx \\
&\le C\|u-v\|_{L^\infty(D_R(\bar x))}\|f\|_{L^1(D_R(\bar x))} \\
&\le C\|u-v\|_{L^\infty(D_R(\bar x))}\|f\|_{L^q(\Omega)}R^{N(1-\frac{1}{q})}.
\end{align*}
Plugging the estimates above together, and noting that
\[ps+N\Big(1-\frac{1}{q}\Big) = N+p\gamma,\]
we have for some $C>0$ depending on the data and $q$
\[\int_{D_R(\bar x)}|u-v|^p\,dx \le C\|u-v\|_{L^\infty(D_R(\bar x))}\|f\|_{L^q(\Omega)}R^{N+p\gamma}.\]
This, along with \eqref{propvar}, proves \ref{qve1}.
\item[$(b)$] If $p<2$, then the monotonicity property is a subtler one. We recall that $\Omega$ has finite inradius and satisfies ${\rm EBC}(\bar\rho)$, hence we apply Lemma \ref{mos} \ref{mos1} with $\Omega'=D_R(\bar x)$:
\begin{align*}
\int_{D_R(\bar x)}|u-v|^p\,dx &\le C\| \dd_\Omega\|_{L^\infty(D_R(\bar x))}^\frac{p^2s}{2}\langle\fpl u-\fpl v,u-v\rangle^\frac{p}{2} \\
&\cdot \big[\|u\|_{L^p(D_R(\bar x))}^p+\|v\|_{L^p(D_R(\bar x))}^p\big]^\frac{2-p}{2},
\end{align*}
with $C>0$ depending on $\Omega$ and $\dd_\Omega$ still denoting the distance from $\Omega^c$. To estimate the first factor, we simply note that $\dd_\Omega\le R$ in $D_R(\bar x)$. For the central factor, again we use testing and H\"older's inequality:
\begin{align*}
\langle\fpl u-\fpl v,u-v\rangle &= \int_{D_R(\bar x)}f(u-v)\,dx \\
&\le \|u-v\|_{L^\infty(D_R(\bar x))}\|f\|_{L^1(D_R(\bar x))} \\
&\le \big[\|u\|_{L^\infty(D_R(\bar x))}+\|v\|_{L^\infty(D_R(\bar x))}\big]\|f\|_{L^q(D_R(\bar x))}|D_R(\bar x)|^\frac{q-1}{q} \\
&\le C\big[\|u\|_{L^\infty(D_R(\bar x))}+\|v\|_{L^\infty(D_R(\bar x))}\big]\|f\|_{L^q(\Omega)}R^{N(1-\frac{1}{q})}.
\end{align*}
The last factor above is easily estimated as follows:
\[\|u\|_{L^p(D_R(\bar x))}^p+\|v\|_{L^p(D_R(\bar x))}^p \le C\big[\|u\|_{L^\infty(D_R(\bar x))}+\|v\|_{L^\infty(D_R(\bar x))}\big]^pR^N.\]
Plugging back these inequalities into the previous one, we have for some $C>0$ depending on the data and $q$
\[\int_{D_R(\bar x)}|u-v|^p\,dx \le C\big[\|u\|_{L^\infty(D_R(\bar x))}+\|v\|_{L^\infty(D_R(\bar x))}\big]^\frac{p(3-p)}{2}\|f\|_{L^q(\Omega)}^\frac{p}{2}R^{N(1-\frac{p}{2q})+\frac{p^2s}{2}}.\]
Since 
\[N\Big(1-\frac{p}{2q}\Big)+\frac{p^2s}{2} = N+p\gamma,\]
the inequality above rephrases as
\[\int_{D_R(\bar x)}|u-v|^p\,dx \le C \left[\|u\|_{L^\infty(D_R(\bar x))}+\|v\|_{L^\infty(D_R(\bar x))}\right]^\frac{p(3-p)}{2}\|f\|_{L^q(\Omega)}^\frac{p}{2}R^{N+p\gamma},\]
which, recalling \eqref{propvar} again, provides \ref{qve2}.
\end{itemize}
In either case, the proof is concluded.
\end{proof}

\noindent
The next step differs substantially from those seen in Section \ref{sec3}, as we aim at optimal H\"older exponents. This will first require a basic variance estimate, and then an iterative procedure {\em \'a la} Moser:

\begin{lemma}\label{ite}
Let $f\in L^q(\Omega)$ with $q\ge 1$, $q>N/ps$, $u\in\w$ be the solution of \eqref{dir}, and
\[0 < \alpha < \min\Big\{s,\,p'\Big(s-\frac{N}{pq}\Big)\Big\}.\]
Then, $u\in C^\alpha(\overline\Omega)$ and there exists $C_\alpha>0$ depending on the data, $q$, and $\alpha$, s.t.\
\[\|u\|_{C^\alpha(\overline\Omega)} \le C_\alpha\|f\|_{L^q(\Omega)}^\frac{1}{p-1}.\]
\end{lemma}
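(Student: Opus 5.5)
We propose to prove Lemma \ref{ite} by the bootstrap scheme announced in Subsection \ref{ss13}. Set $\bar\alpha_s=\min\{s,p'(s-N/pq)\}$. Starting from $\beta_0=0$, we build an increasing sequence $\beta_k\uparrow\bar\alpha_s$. At each step we prove an estimate
\[H_{\beta_{k+1}}(u)\le C\big[\|f\|_{L^q(\Omega)}^\frac{1}{p-1}+H_{\beta_k}(u)\big],\]
with $H$ as in \eqref{hal}. For $k=0$ we need the bound $\|u\|_{L^\infty(\Omega)}\le C\|f\|_{L^q}^{1/(p-1)}$. This is standard: combine the energy bound \eqref{nou3} (valid for $q>N/ps$, since then $q'<p^*_s$) with a global De Giorgi/Moser $L^\infty$ bound for \eqref{dir}, or apply Theorem \ref{apb} after extending. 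Iterating finitely many times then reaches any $\alpha<\bar\alpha_s$.

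The main step is a Campanato-type variance estimate with Theorem \ref{cam}. Fix $x_0\in\Omega$ and small $r$, and take a radius $R=r^\theta$ with $\theta\in(0,1)$ to be optimized. There are two cases.
\begin{itemize}
\item[$(a)$] If $\dd_\Omega(x_0)\ge 2R$, Corollary \ref{chr} with $\alpha=\beta_k$ and any $\delta\in(\alpha,\bar\alpha)$ gives
\[\sigma(u,x_0,r)\le C\big[H_{\beta_k}R^{\beta_k}+R^{\bar\alpha}\|f\|^{1/(p-1)}\big]^p(r/R)^{p\delta}r^N.\]
\item[$(b)$] If $\dd_\Omega(x_0)<2R$, pick $\bar x\in\partial\Omega$ nearest to $x_0$, so that $D_r(x_0)\subseteq D_{3R}(\bar x)$. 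Let $v$ be the $(s,p)$-harmonic extension of $u$ in $D_{3R}(\bar x)$. Lemma \ref{hoe} gives ${\rm osc}_{D_r(x_0)}v\le CH_{\beta_k}R^{\beta_k}(r/R)^s$, hence $\sigma(v,x_0,r)\le CH_{\beta_k}^pR^{p\beta_k}(r/R)^{ps}r^N$. Lemma \ref{qve} controls $\sigma(u-v)$. In it we bound $\|u\|_{L^\infty(D_{3R})}\le CH_{\beta_k}R^{\beta_k}$ (since $u(\bar x)=0$) and $\|v\|_{L^\infty}$ by Lemma \ref{abh}. Then Young's inequality turns the right side into
\[\epsilon H_{\beta_k}^pR^{N+p\beta_k}+C_\epsilon\|f\|^{p'}R^{N+p\bar\alpha}.\]
This uses that $\gamma$ together with the power of the $L^\infty$ factor reproduces $\bar\alpha$: in case $p\ge2$, from $R^{N+p\gamma}\cdot R^{\beta_k}$ we get $\beta_k+p\gamma=(p-1)\beta_k+\dots$, and we should check that Young yields $R^{N+p\bar\alpha}$, at least for $\bar\alpha$ replaced by something $\ge$ the target.
\end{itemize}

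Combining the two cases, we get
\[\sigma(u,x_0,r)\le C\big[H_{\beta_k}^p+\|f\|^{p'}\big]\big(R^{p\beta_k}(r/R)^{p\delta'}+R^{p\bar\alpha}\big)r^{N}\cdot(\text{powers}),\]
where $\delta'=\min\{\delta,s\}$. Choosing $R=r^\theta$ and balancing $\theta\beta_k+(1-\theta)\delta'$ against $\theta\,\bar\alpha$-type terms gives a new exponent $\beta_{k+1}>\beta_k$. Since the fixed point of the map $\beta\mapsto\beta_{k+1}$ is $\min\{\delta',\bar\alpha\}$, letting $\delta\uparrow\bar\alpha$ yields convergence to $\bar\alpha_s$. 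Theorem \ref{cam} converts the variance bound into a $C^{\beta_{k+1}}$ seminorm bound, and adding the $L^\infty$ bound gives the full norm.

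The main obstacle is the perturbative term in case $(b)$. The factor $\|u-v\|_\infty$, or $(\|u\|+\|v\|)^{p(3-p)/2}$ when $p<2$, must be absorbed so that the exponent of $R$ attached to $\|f\|$ is genuinely $p\bar\alpha$ and not just $p\gamma$, since $\gamma$ alone is smaller than $\bar\alpha$. If the direct Young's inequality does not close, the first fallback is to iterate in dyadic radii (a Campanato iteration lemma): combine $\sigma(u,x_0,\tau R)\le C\tau^{N+ps}\sigma$-type decay from Lemma \ref{hoe} with the $f$-error at each scale, and rely on the bootstrap in $\beta_k$ to upgrade the $L^\infty$ factor $H_{\beta_k}R^{\beta_k}$ progressively. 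The singular case $p<2$ will need the most care because of the extra power $(3-p)/2$.
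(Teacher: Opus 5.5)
Your skeleton is the paper's: bootstrap on $H_\beta(u)$ from a global $L^\infty$ bound, the interior case by Corollary \ref{chr}, the boundary case by the $(s,p)$-harmonic extension in $D_{3R}(\bar x)$ with Lemmas \ref{hoe}, \ref{abh}, \ref{qve}, then Theorem \ref{cam}. But the boundary case does not close as you set it up, for two reasons.

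First, Lemma \ref{hoe} concerns sets $D_r(\bar x)$ centred at $\bar x\in\partial\Omega$. For $x_0\in\Omega$ it only gives, via $D_r(x_0)\subseteq D_{r+\dd_\Omega(x_0)}(\bar x)$, the bound ${\rm osc}_{D_r(x_0)}\,v\le CH_\beta(u)R^\beta\big((r+\dd_\Omega(x_0))/R\big)^s$, not $(r/R)^s$. With a single radius $R=r^\theta$ and the split at $\dd_\Omega(x_0)=2R$, points with $\dd_\Omega(x_0)$ comparable to $R$ get no decay at all. The paper instead splits at $\dd_\Omega(x_0)\ge 4r^\theta$ versus $<4r^\theta$ and uses \emph{two} radii. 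It takes $R=2r^\theta$ in the interior case and the larger $R=2r^\eta$, $\eta<\theta$, in the boundary case. Then the ratio is $\lesssim r^{\theta-\eta}$ and the $v$-term becomes $H_\beta^p(u)\,r^{N+ps(\theta-\eta)+p\beta\eta}$.

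Second, and more seriously, the error from Lemma \ref{qve} is $\int_{D_{3R}(\bar x)}|u-v|^p\,dx$, which carries $R^N$, not $r^N$. In your combined estimate the term $\epsilon H^p_{\beta_k}(u)R^{N+p\beta_k}$ has silently disappeared and $R^N$ has become $r^N$. Since $R\ge r$, dividing $\epsilon H^p_{\beta_k}R^{N+p\beta_k}$ by $r^{N+p\beta_{k+1}}$ with $\beta_{k+1}>\beta_k$ gives a quantity unbounded as $r\to0$, for every fixed $\epsilon$. It cannot be absorbed either, because the left side involves a different exponent. So the step $H_{\beta_{k+1}}\le C[\,\cdots+H_{\beta_k}]$ does not follow: Young's inequality throws away exactly the margin that drives the iteration.

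The fix is to insert $\|u\|_{L^\infty(D_{3R}(\bar x))}+\|v\|_{L^\infty(D_{3R}(\bar x))}\le CH_\beta(u)R^\beta$ directly into Lemma \ref{qve}. This gives $C[H_\beta^{\xi_p}(u)+1]R^{N+\xi_p\beta+p\gamma}$ with $\xi_p=\max\{1,p(3-p)/2\}$. The excess $p\gamma+\xi_p\beta-p\beta$ is positive precisely when $\beta<p'(s-N/pq)$, and this is where that hypothesis enters. So your diagnosis is inverted: the $f$-exponent need not reach $p\bar\alpha$ at each step, but the exponent attached to $H_\beta$ must beat $p\beta$.

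Then choose $\eta<\theta\le1$ with $N+p(\theta\beta+(1-\theta)\delta)=N+ps(\theta-\eta)+p\beta\eta=\eta(N+\xi_p\beta+p\gamma)$. This produces $\phi(\beta)=\theta\beta+(1-\theta)\delta>\beta$ for $\beta<\min\{s,\delta\}$, with fixed point $\min\{s,\delta\}$.

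Your dyadic-iteration fallback does not apply as such. The decay of $v$ is controlled by the global quantity $H_\beta(u)$, because of nonlocal tails, not by $\sigma(u,x_0,R)$. So there is no recursive inequality in $\sigma$ to iterate.
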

\begin{proof}
Due to $(p-1)$-homogeneity of $\fpl$, without loss of generality we may assume $\|f\|_{L^q(\Omega)}=1$. By \cite[Theorem 3.1]{BP}, there exists $C_0>0$ depending on the data and $q$, s.t.\
\beq\label{ite1}
\|u\|_{L^\infty(\Omega)} \le C_0.
\eeq
Let $\bar\alpha\in(0,1]$ be defined by \eqref{exp}, and $0<\delta<\bar\alpha$ be fixed. For all $\alpha\in [0, 1)$ define $H_\alpha(u)$ as in \eqref{hal}. By \eqref{ite1} we know that $H_0(u)<\infty$. We claim that $H_\alpha(u)<\infty$ for all $0\le\alpha<\min\{s,\delta\}$, with a uniform estimate. To prove such claim, we will construct an iterative scheme on $H_\alpha(u)$, with $\alpha$ lying in the desired interval.
\vskip2pt
\noindent
Let us fix $0\le\alpha<\min\{s,\delta\}$, and assume $H_\alpha(u)<\infty$. Also, let us fix $x_0\in\Omega$, $0<r<1$ small enough (to be determined later). We consider a radius $R>r$ (subject to further conditions to be detailed later), and as in Lemma \ref{nov} we distinguish two cases:
\begin{itemize}[leftmargin=1cm]
\item[$(a)$] If $ \dd_\Omega(x_0)\ge 2R$, then $D_{2R}(x_0)=B_{2R}(x_0)$ and we can apply local regularity theory. By Corollary \ref{chr}, recalling that $\|f\|_{L^q(\Omega)}=1$ and the bounds on $\alpha$, we have $u\in C^\delta(\overline{B}_{R/2}(x_0))$ and there exists $C>0$ depending on the data, $q$, $\delta$, and $\alpha$ s.t.\
\begin{align*}
[u]_{C^\delta(\overline{B}_{R/2}(x_0))} &\le \frac{C}{R^\delta}\left[H_\alpha(u) R^\alpha+R^{p'(s-\frac{N}{pq})}\right] \\
&\le \frac{C}{R^\delta}\left[H_\alpha(u)+{\rm diam}(\Omega)^{p'(s-\frac{N}{pq})-\alpha}\right]R^\alpha \le C\big[H_\alpha(u)+1\big]R^{\alpha-\delta}.
\end{align*}
This, in turn, implies the following variance estimate for all $0<r\le R/2\le \dd_\Omega(x_0)/4$:
\begin{equation}\label{ite2}
\sigma(u,x_0,r) \le C[u]_{C^\delta(\overline{B}_{R/2}(x_0))}^pr^{N+p\delta}\le C\big[H_\alpha^p(u)+1\big]R^{p(\alpha-\delta)}r^{N+p\delta}.
\end{equation}
\item[$(b)$] If $ \dd_\Omega(x_0)<2R$, then we assume $R\le\bar\rho/4$, with $0<\bar\rho<1$ as in Lemma \ref{geo}. Hence, there exists a unique $\bar x\in\partial\Omega$ s.t.\ $|x_0-\bar x|= \dd_\Omega(x_0)$, and for all $0<r\le R$ we have
\[D_r(x_0) \subset D_{r+ \dd_\Omega(x_0)}(\bar x) \subseteq D_{3R}(\bar x).\]
Let $v\in\w$ be the $(s, p)$-harmonic extension of $u$ in $D_{3R}(\bar x)$, that is, the unique solution of
\[\begin{cases}
\fpl v = 0 & \text{in $D_{3R}(\bar x)$} \\
v = u & \text{in $D_{3R}^c(\bar x)$.}
\end{cases}\]
We apply Lemma \ref{qve} in $D_{3R}(\bar x)$, recalling that $\|f\|_{L^q(\Omega)}=1$ and $ \dd_\Omega\le 3R$ in $D_{3R}(\bar x)$. Setting for simplicity of notation
\[\xi_p = \max\Big\{1,\,\frac{p(3-p)}{2}\Big\}\]
allows to summarize the two cases of Lemma \ref{qve} as
\[\sigma(u,x_0,r) \le C\sigma(v,x_0,r)+C\big[\|u\|_{L^\infty(D_{3R}(\bar x))}+\|v\|_{L^\infty(D_{3R}(\bar x))}+1\big]^{\xi_p}R^{N+p\gamma} ,\]
where $C>0$ depends on the data and $q$, and we recall that
\[\gamma = \min\Big\{1,\frac{p}{2}\Big\}\Big(s-\frac{N}{pq}\Big).\]
We estimate separately both terms on the right hand side of the previous inequality. First note that $\alpha<p's$. Applying Lemma \ref{hoe} in $D_{3R}(\bar x)$, we get
\[\underset{D_r(x_0)}{\rm osc}\,v \le \underset{D_{r+ \dd_\Omega(x_0)}(\bar x)}{\rm osc}\,v \le CH_\alpha(u)\Big(\frac{r+ \dd_\Omega(x_0)}{R}\Big)^sR^\alpha\]
which  directly implies
\[\sigma(v,x_0,r) \le CH_\alpha^p(u)\Big(\frac{r+ \dd_\Omega(x_0)}{R}\Big)^{ps}R^{p\alpha}r^N.\]
Besides, applying Lemma \ref{abh} in $D_{3R}(\bar x)$ and recalling that $u(\bar x)=0$, we have
\[\|u\|_{L^\infty(D_{3R}(\bar x))}+\|v\|_{L^\infty(D_{3R}(\bar x))} \le C\|u\|_{L^\infty(D_{6R}(\bar x))}+C[u]_{C^\alpha(\overline\Omega)}R^\alpha \le CH_\alpha(u) R^\alpha.\]
Going back to the main inequality, we have for all $0<r<R$, $ \dd_\Omega(x_0)<2R\le\bar\rho/2$
\beq\label{ite3}
\sigma(u,x_0,r) \le CH_\alpha^p(u)\Big(\frac{r+ \dd_\Omega(x_0)}{R}\Big)^{ps}R^{p\alpha}r^N+C\big[H_\alpha^{\xi_p}(u)+1\big]R^{N+\xi_p\alpha+p\gamma}.
\eeq
\end{itemize}
In order to merge \eqref{ite2} and \eqref{ite3} into one estimate, we argue as in Lemma \ref{nov}. Fix $x_0\in\Omega$, $0<r<1$ to be chosen conveniently small, and $0<\eta<\theta\le 1$ to be determined later. Again we distinguish two cases:
\begin{itemize}[leftmargin=1cm]
\item[$(a')$] If $ \dd_\Omega(x_0)\ge 4r^\theta$, then we set $R=2r^\theta$ so that
\[0 < r < r^\theta = \frac{R}{2} \le \frac{ \dd_\Omega(x_0)}{4},\]
hence case $(a)$ of the previous alternative occurs. By \eqref{ite2} we have
\beq\label{ite4}
\sigma(u,x_0,r) \le C\big[H^p_\alpha(u)+1\big]r^{N+p(\theta\alpha+(1-\theta)\delta)}.
\eeq
\item[$(b')$] If $ \dd_\Omega(x_0)<4r^\theta$, then we set $R=2r^\eta$ and (choosing $r$ small enough) we assume $R<\bar\rho/4$, so that
\[0 < r < R, \quad  \dd_\Omega(x_0) < 2R < \frac{\bar\rho}{2}\]
and case $(b)$ occurs. Therefore, by \eqref{ite3} and $r<1$ we have
\begin{align*}
\sigma(u,x_0,r) &\le CH_\alpha^p(u)\Big(\frac{r^\theta+4r^\theta}{2r^\eta}\Big)^{ps}r^{N+p\alpha\eta}+C\big[H_\alpha^{\xi_p}(u)+1\big]r^{\eta(N+\xi_p\alpha+p\gamma)} \\
\nonumber &\le CH_\alpha^p(u)r^{N+ps(\theta-\eta)+p\alpha\eta}+C\big[H_\alpha^{\xi_p}(u)+1\big]r^{\eta(N+\xi_p\alpha+p\gamma)}.
\end{align*}
Note that $\xi_p<p$, hence $H_\alpha^{\xi_p}(u)\le H_\alpha^p(u)+1$ and we obtained in this case
\begin{align}\label{ite5}
\sigma(u,x_0,r) &\le C\big[H_\alpha^p(u)+1\big]r^{N+ps(\theta-\eta)+p\alpha\eta} \\
\nonumber&+ C\big[H_\alpha^p(u)+1\big]r^{\eta(N+\xi_p\alpha+p\gamma)}.
\end{align}
\end{itemize}
In order to make powers of $r$ in \eqref{ite4} and \eqref{ite5}, respectively, agree, we seek $\eta$, $\theta$ s.t.\
\[N+p(\theta\alpha+(1-\theta)\delta) = N+ps(\theta-\eta)+p\alpha\eta = \eta(N+\xi_p\alpha+p\gamma).\]
A straightforward algebraic computation leads to the following expression (in which for simplicity we let $\theta$ depend on $\eta$):
\[\begin{split}
\eta &= \frac{N(s+\delta-\alpha)+ps\delta}{(N+\xi_p\alpha+p\gamma)(s+\delta-\alpha)+p(s-\alpha)(\delta-\alpha)}, \\
\theta &= \frac{\eta(N+ps+(\xi_p-p)\alpha+p\gamma)-N}{ps}.
\end{split}\]
We next check that such choice is admissible. From $\alpha<\delta$ it immediately follows that $\eta>0$. Besides, by definition of $\gamma$ and $\xi_p$ we have for all $p>1$
\beq\label{ite6}
p\gamma+\xi_p\alpha-p\alpha > 0.
\eeq
Indeed, the latter is equivalent in both cases $p\le 2$ and $p\ge 2$ to 
\[\alpha<p'\left(s-\frac{N}{pq}\right),\]
which again is granted by $\alpha<\delta$. The other required conditions on the parameters $\eta$, $\theta$ rephrase as follows:
\[\begin{split}
\eta < \theta \quad &\Longleftrightarrow \quad ps\delta(p\gamma+\xi_p\alpha-p\alpha) > 0,\\
\theta \le 1 \quad  &\Longleftrightarrow \quad (s-\alpha)(p\gamma+\xi_p\alpha-p\alpha) \ge 0.
\end{split}\]
Clearly, both inequalities follow from \eqref{ite6}. Therefore, we have $0<\eta<\theta\le 1$, as required. Now set for all $0\le\alpha\le\min\{s,\delta\}$
\[\phi(\alpha) = \theta\alpha+(1-\theta)\delta = \frac{N\alpha(s+\delta-\alpha)+s\delta(p\gamma+\xi_p\alpha)}{(N+p\gamma+\xi_p\alpha)(s+\delta-\alpha)+p(s-\alpha)(\delta-\alpha)},\]
so $N+p\phi(\alpha)$ equals all powers of $r$ in estimates \eqref{ite4}, \eqref{ite5}. Besides, we note that $\xi_p\le p$ for all $p>1$. Now, all the inequalities above summarize in the following variance estimate: for all $0\le\alpha<\min\{s,\delta\}$, $x_0\in\Omega$, and $r$ satisfying
\[0 < r < \Big(\frac{\bar\rho}{8}\Big)^\frac{1}{\eta} =: r_0,\]
we have
\beq\label{ite7}
\sigma(u,x_0,r) \le C\big[H_\alpha^p(u)+1\big]r^{N+p\phi(\alpha)}.
\eeq
Some further remarks on the continuous mapping $\phi:[0,\min\{s,\delta\}]\to\R$ (we include the supremum of the interval for simplicity) are now in order. First, for all $\alpha\ge 0$ we have
\[\phi(\alpha) > \alpha \quad \Longleftrightarrow \quad (p\gamma+\xi_p\alpha-p\alpha)(s-\alpha)(\delta-\alpha) > 0.\]
Recalling \eqref{ite6}, we deduce that $\phi(\alpha)>\alpha$ (in particular, $\phi(\alpha)>0$) for all $0\le\alpha<\min\{s,\delta\}$, while $\phi(\alpha)=\alpha$ at $\alpha=\min\{s,\delta\}$.
\vskip2pt
\noindent
By Theorem \ref{cam} and \eqref{ite7}, we have for all $0\le\alpha<\min\{s,\,\delta\}$
\begin{align*}
H_{\phi(\alpha)}^p(u) &= [u]_{C^{\phi(\alpha)}(\overline\Omega)}^p \\
&\le C_\alpha\sup_{x_0\in\Omega,\,0<r<r_0}\,\frac{\sigma(u,x_0,r)}{r^{N+p\phi(\alpha)}} \le C_\alpha\big[H_\alpha^p(u)+1\big].
\end{align*}
We next iterate on the basis of the previous inequality. Set $\alpha_0=0$, and for all $j\ge 0$ set $\alpha_{j+1}=\phi(\alpha_j)$. From the properties of $\phi$, it follows that $(\alpha_j)$ is an increasing sequence, tends to $\min\{s,\delta\}$ as $j\to\infty$, and for all $j\ge 0$
\[H_{\alpha_{j+1}}(u) \le C_j\big[H_{\alpha_j}(u)+1\big] \le \widetilde{C}_j,\]
with $C_j,\widetilde{C}_j>0$ depending on the data, $q$, and the index $j$, where we also have used that $H_0(u)$ is bounded by \eqref{ite1}.
\vskip2pt
\noindent
We can now conclude the proof. Fix $0<\alpha<\min\{s,\bar\alpha\}$. Then, we can find $\alpha<\delta<\bar\alpha$ depending only on the data, $q$, and $\alpha$, so that $\alpha<\min\{s,\delta\}$. Define the sequence $(\alpha_j)$ as above, then we can find an integer $j\ge 0$ s.t.\ $\alpha_{j+1}\ge\alpha$, and $H_{\alpha_{j+1}}(u)$ is bounded by a constant $\widetilde{C}_\alpha>0$ only depending on the data, $q$, and $\alpha$. Therefore, $u\in C^\alpha(\overline\Omega)$ and by \eqref{ite1} we have
\[\|u\|_{C^{\alpha_{j+1}}(\overline\Omega)} = \|u\|_{L^\infty(\Omega)}+[u]_{C^{\alpha_{j+1}}(\overline\Omega)} \le C_0+\widetilde{C}_\alpha.\]
By the continuous embedding $C^{\alpha_{j+1}}(\overline\Omega)\hookrightarrow C^\alpha(\overline\Omega)$, we conclude that $u\in C^\alpha(\overline\Omega)$ and $[u]_{C^\alpha(\overline\Omega)} \le C_\alpha$
with $C_\alpha>0$ depending on the data, $q$, and $\alpha$. Homogeneity of $\fpl$ finally allows to remove the restraint $\|f\|_{L^q(\Omega)}=1$, thus achieving the conclusion.
\end{proof}

\noindent
We can now prove our global regularity result:
\vskip4pt
\noindent
{\em Proof of Theorem \ref{hol}.} If $N/ps<q\le N/s$, then we have
\[p'\Big(s-\frac{N}{pq}\Big) \le s \ (< 1),\]
hence in particular $\bar\alpha\le s$. By Lemma \ref{ite}, for all $0\le\alpha\le s$ we have $u\in C^\alpha(\overline\Omega)$ and we can find $C_\alpha>0$ depending on the data, $q$, and $\alpha$, s.t.\
\[\|u\|_{C^\alpha(\overline\Omega)} \le C_\alpha\|f\|_{L^q(\Omega)}^\frac{1}{p-1}.\]
\vskip2pt
\noindent
We then consider case $q>N/s$, and by homogeneity we assume $\|f\|_{L^q(\Omega)}=1$. Note that now
\beq
\label{diss}
p'\Big(s-\frac{N}{pq}\Big) > s,
\eeq
that is, $\bar\alpha=s$. Therefore, Lemma \ref{ite} ensures $u\in C^\alpha(\overline\Omega)$ for all $0<\alpha<s$, with
\[\|u\|_{C^\alpha(\overline\Omega)} \le C_\alpha.\]
In order to reach the limit exponent $s$, we argue as in \cite[Theorem 2.7]{IM}. We apply Lemma \ref{rsb} with $\beta=s$, $\nu=0$. First, from Theorem \ref{fbr} it follows in particular that
\[\Big\|\frac{u}{\ds}\Big\|_{L^\infty(\Omega)} \le C,\]
hence we have for all $x\in\Omega$
\beq\label{hol3}
|u(x)| \le C\ds(x),
\eeq
with $C>0$ depending on the data, so that $\|u\|_{L^\infty(\Omega)}\le C{\rm diam}(\Omega)^s$ and hypothesis \ref{rsb1} of Lemma \ref{rsb} holds. Next, let $x_0\in\Omega$ and $ \dd_\Omega(x_0)=4R$. By Theorem \ref{lhr} with $\delta=s$, we have $u\in C^s(\overline{B}_{R/2}(x_0))$ and
\beq\label{hol4}
[u]_{C^s(\overline{B}_{R/2}(x_0))} \le \frac{C}{R^s}\left[\|u\|_{L^\infty(B_R(x_0))}+R^{p'(s-\frac{N}{pq})}+{\rm Tail}(u,x_0,R)\right].
\eeq
We now estimate the three terms on the right hand side of \eqref{hol4}. From \eqref{hol3} and $ \dd_\Omega(x_0)=4R$ we have
\[\|u\|_{L^\infty(B_R(x_0))} \le CR^s.\]
For the second term, thanks to \eqref{diss}, we have
\[R^{p'(s-\frac{N}{pq})} \le {\rm diam}(\Omega)^{p'(s-\frac{N}{pq})-s}R^s.\]
For the tail term, let $\tilde x\in\partial\Omega$ be a point (not necessarily unique) s.t.\ $ \dd_\Omega(x_0)=|x_0-\tilde x|$, then for all $x\in\R^N$ we have
\[ \dd_\Omega(x) \le |x-\tilde x| \le |x-x_0|+4R,\]
hence by \eqref{hol3} and subadditivity
\[|u(x)| \le C\big(|x-x_0|^s+R^s\big).\]
We deduce the following integral estimate:
\begin{align*}
\int_{B_R^c(x_0)}\frac{|u(x)|^{p-1}}{|x-x_0|^{N+ps}}\,dx &\le C\int_{B_R^c(x_0)}\frac{dx}{|x-x_0|^{N+s}}+CR^{(p-1)s}\int_{B_R^c(x_0)}\frac{dx}{|x-x_0|^{N+ps}} \\
&\le \frac{C}{R^s}+\frac{CR^{(p-1)s}}{R^{ps}} \le \frac{C}{R^s},
\end{align*}
which in turn, by \eqref{tail}, implies
\[{\rm Tail}(u,x_0,R) \le CR^s.\]
Plugging such estimates into \eqref{hol4}, we obtain
\[[u]_{C^s(\overline{B}_{R/2}(x_0))} \le C,\]
hence hypothesis \ref{rsb2} of Lemma \ref{rsb} is satisfied as well (with $\nu=0$). Finally, by \eqref{hol3} again, for all $\bar x\in\partial\Omega$ and $r>0$ small enough we have
\[\underset{D_r(\bar x)}{\rm osc}\,u \le 2\sup_{D_r(\bar x)}\,\ds \le Cr^s,\]
so hypothesis \ref{rsb3} of Lemma \ref{rsb} holds. Therefore, setting $\alpha=s$, we have $u\in C^s(\overline\Omega)$ and $[u]_{C^s(\overline\Omega)}\le C_s$ for some $C_s>0$ depending on the data and $q$. Invoking \eqref{hol3} once again to estimate $\|u\|_{L^\infty(\Omega)}$, we obtain
\[\|u\|_{C^s(\overline\Omega)} \le C_s,\]
for a possibly bigger $C_s>0$ depending on the data and $q$. As above, exploiting homogeneity of $\fpl$ we may remove the restraint $\|f\|_{L^q(\Omega)}=1$ and thus conclude the proof.
\qed

\appendix

\section{A multidimensional example}\label{app}

\noindent
We aim at extending Example \ref{opt} to any dimension $N\ge 2$, at least for $p=2$, $0<s<1$. Denote $x=(x',x_N)\in\R^N$ and set
\[\R^N_+ = \big\{x\in\R^N:\,x_N>0\big\}.\]
Fix $0<\alpha<s$ and set for all $x\in\R^N$
\[u(x) = |x|^{\alpha-s}(x_N)_+^s.\]
Clearly $u\in C^\infty(\R^N_+)$, but we have at most $u\in C^\alpha$ at the boundary $\{x_N=0\}$. Besides, $u=0$ in $\R^N\setminus\R^N_+$. We claim that
\begin{enumroman}
\item\label{app1} $u\in\widetilde{W}^{s,2}(B_R(0))$ for all $R>0$;
\item\label{app2} there exists $f$, lying in $L^q_{\rm loc}(\R^N_+)$ for any $1<q<N/(2s-\alpha)$, s.t.\ locally in $\R^N_+$
\[(-\Delta)^su = f.\]
\end{enumroman}
The range of $q$ in \ref{app2} is optimal, thus allowing for the same construction as in Example \ref{opt} for $N/2s<q<N/s$ by appropriately choosing $\alpha$.
\vskip2pt
\noindent
We first prove \ref{app1}. Set for all $x\in\R^N\setminus\{0\}$
\[g(x) = \int_{\R^N}\frac{|u(x)-u(y)|^2}{|x-y|^{N+2s}}\,dy.\]
First we show that $g(x)$ is well defined for all $|x|=2r>0$. Split the integration domain into $B_r(x)$ and $B_r^c(x)$. In $B_r(x)$, the function $u$ is Lipschitz with a constant $L>0$ depending on $s,\alpha,r$, so
\[\int_{B_r(x)}\frac{|u(x)-u(y)|^2}{|x-y|^{N+2s}}\,dy \le L^2\int_{B_r(x)}\frac{dy}{|x-y|^{N+2(s-1)}} < \infty.\]
Besiddes, by $\alpha$-H\"older continuity of $u$ and $u(0)=0$, for all $y\in B_r^c(x)$ we have
\[|u(y)| = \le |y|^\alpha \le (2r)^\alpha+|x-y|^\alpha,\]
which in turn implies
\begin{align*}
\int_{B_r^c(x)}\frac{|u(x)-u(y)|^2}{|x-y|^{N+2s}}\,dy &\le \int_{B_r^c(x)}\frac{[(2r)^\alpha+|x-y|^\alpha]^2}{|x-y|^{N+2s}}\,dy \\
&\le Cr^{2\alpha}\int_{B_r^c(x)}\frac{dy}{|x-y|^{N+2s}}+C\int_{B_r^c(x)}\frac{dy}{|x-y|^{N+2(s-\alpha)}}\,dy < \infty.
\end{align*}
We point out some further properties: from $\alpha$-positive homogeneity of $u$ it follows that $g$ is $2(\alpha-s)$-positively homogeneous. We next show that $g\in L^1(B_R(0))$ for all $R>0$, passing to spherical coordinates and setting $S_r=\partial B_r(0)$ for all $r>0$:
\begin{align*}
\int_{B_R(0)}g(x)\,dx &= \Big[\int_0^R r^{N-1+2(\alpha-s)}\,dr\Big]\,\Big[\int_{S_1}g(e)\,d\mathcal{H}^{N-1}\Big] \\
&= \frac{R^{N+2(\alpha-s)}}{N+2(\alpha-s)}\int_{S_1}g(e)\,d\mathcal{H}^{N-1} < \infty,
\end{align*}
where we also used the aforementioned homogeneity and boundedness of $g$ in $S_1$. By the above argument (with radius $2R$), we have
\[\iint_{B_{2R}(0)\times B_{2R}(0)}\frac{|u(x)-u(y)|^2}{|x-y|^{N+2s}}\,dx\,dy \le \int_{B_{2R}(0)}g(x)\,dx < \infty,\]
hence $u\in W^{s,2}(B_{2R}(0))$. Also, from $\alpha<s$ we get
\[\int_{\R^N}\frac{|u(x)|}{(1+|x|)^{N+2s}}\,dx \le \int_{\R^N}\frac{|x|^\alpha}{(1+|x|)^{N+2s}}\,dx < \infty.\]
Recalling the definition give in Section \ref{sec2}, we see that $u\in\widetilde{W}^{s,2}(B_R(0))$ and prove \ref{app1}.
\vskip2pt
\noindent
In order to prove \ref{app2}, we explicity compute the $s$-fractional Laplacian of $u$ at $x\in\R^N_+$, exploiting the regularity of $u$ is such domain (here P.V.\ stands for principal value):
\begin{align*}
(-\Delta)^su(x) &= {\rm P.V.}\,\int_{\R^N}\frac{|x|^{\alpha-s}(x_N)_+^s-|y|^{\alpha-s}(y_N)_+^s}{|x-y|^{N+2s}}\,dy \\
&= |x|^{\alpha-s}{\rm P.V.}\,\int_{\R^N}\frac{(x_N)_+^s-(y_N)_+^s}{|x-y|^{N+2s}}\,dy+{\rm P.V.}\,\int_{\R^N}\frac{|x|^{\alpha-s}-|y|^{\alpha-s}}{|x-y|^{N+2s}}(y_N)_+^s\,dy \\
&= {\rm P.V.}\,\int_{\R^N}\frac{|x|^{\alpha-s}-|y|^{\alpha-s}}{|x-y|^{N+2s}}(y_N)_+^s\,dy =: f(x),
\end{align*}
where we have used that $(x_N)_+^s$ is $s$-harmonic in $\R^N_+$. Next, we fix $1\le q<N/(2s-\alpha)$ and prove that $f\in L^q(B_R^+(0))$ for all $R>0$ (we set $B_R^+(0)=B_R(0)\cap\R^N_+$, and $S_R^+$ has the same meaning). In fact, it is seen as above that $f$ is $(\alpha-2s)$-positive homogeneous, so we are reduced to prove
\[\int_{B_1^+(0)}|f(x)|^q\,dx < \infty.\]
Again we use spherical coordinates and homogeneity:
\[\int_{B_1^+(0)}|f(x)|^q\,dx = \Big[\int_0^1 r^{N-1+q(\alpha-2s)}\,dr\Big]\,\Big[\int_{S_1^+}|f|^q\,d\mathcal{H}^{N-1}\Big].\]
The first integral is finite due to $N-1+q(\alpha-2s)>-1$. For the second, we will prove in fact more, i.e., that $f\in L^\infty(S_1^+)$. Fix $x\in S_1^+$ and split the integral as follows:
\begin{align*}
f(x) &= {\rm P.V.}\,\int_{B_{1/2}(x)}\frac{|x|^{\alpha-s}-|y|^{\alpha-s}}{|x-y|^{N+2s}}(y_N)_+^s\,dy+\int_{B_{1/2}^c(x)}\frac{|x|^{\alpha-s}-|y|^{\alpha-s}}{|x-y|^{N+2s}}(y_N)_+^s\,dy \\
&=: f_1(x)+f_2(x).
\end{align*}
We first deal with the non-singular term $f_2(x)$. For all $y\in B_{1/2}^c(x)$ we have $(y_N)_+\le |y|$ and
\[|y|^s+|y|^\alpha \le 2+|x-y|^s+|x-y|^\alpha,\]
hence
\begin{align*}
|f_2(x)| &\le \int_{B_{1/2}^c(x)}\frac{|y|^s+|y|^\alpha}{|x-y|^{N+2s}}\,dy \\
&\le C\int_{B_{1/2}^c(0)}\frac{dz}{|z|^{N+2s}}+C\int_{B_{1/2}^c(0)}\frac{dz}{|z|^{N+s}}+C\int_{B_{1/2}^c(0)}\frac{dz}{|z|^{N+2s-\alpha}} < \infty,
\end{align*}
and the latter is independent of $x$. Passing to $f_1$, we set for brevity $v(x)=|x|^{\alpha-s}$, which is $C^2$ in $\R^N\setminus\{0\}$, and rewrite the integral using the changes of variables $y=(x'\pm z',x_N+z_N)$, respectively, with $z\in B_{1/2}(0)$. Linearity of the operator (this is the main reason why we take $p=2$) allows to rephrase the integral via a second-order difference, thus removing the principal value:
\[f_1(x) = \frac{1}{2}\int_{B_{1/2}(0)}\frac{2v(x',x_N)-v(x'+z',x_N+z_N)-v(x'-z',x_N+z_N)}{|z|^{N+2s}}(x_N+z_N)_+^s\,dz.\]
Arguing as in \cite[Lemma 2.11]{IMS}, we have
\[\big|2v(x',x_N)-v(x'+z',x_N+z_N)-v(x'-z',x_N+z_N)\big| \le \|v\|_{C^2(B_2(0)\setminus B_{1/2}(0))}|z|^2,\]
hence
\begin{align*}
|f_1(x)| &\le \frac{1}{2}\int_{B_{1/2}(0)}\frac{\|v\|_{C^2(B_2(0)\setminus B_{1/2}(0))}|z|^2}{|z|^{N+2s}}(x_N+z_N)_+^s\,dz \\
&\le C\int_{B_{1/2}(0)}\frac{dz}{|z|^{N+2(s-1)}} < \infty,
\end{align*}
and the last quantity is independent of $x$. Thus, $f\in L^\infty(S_1^+)$, which implies \ref{app2}.

\vskip4pt
\noindent
{\bf Acknowledgement.} Both authors are members of GNAMPA (Gruppo Nazionale per l'Analisi Matematica, la Probabilit\`a e le loro Applicazioni) of INdAM (Istituto Nazionale di Alta Matematica 'Francesco Severi') and supported by the the research project {\em Problemi non locali di tipo stazionario ed evolutivo} (GNAMPA, CUP E53C23001670001). A.I.\ is partially supported by the research project {\em Partial Differential Equations and their Role in Understanding Natural Phenomena} (Fondazione di Sardegna 2023, CUP F23C25000080007). S.M.\ is partially supported by PRIN project 2022ZXZTN2 - {\em Nonlinear Differential Problems with Applications to Real Phenomena} and by project PIACERI Linea 1 - EdP.EReMo.

\end{document}